\newtheorem{thm}{Theorem}[section]
\newtheorem{lem}[thm]{Lemma}
\newtheorem{prop}[thm]{Proposition}
\theoremstyle{definition}
\newtheorem{ass}[thm]{Assumption}
\theoremstyle{remark}
\newtheorem{rem}[thm]{Remark}
\newtheorem{exa}[thm]{Example}
\numberwithin{equation}{section}
\newcommand{\abs}[1]{\left\vert#1\right\vert}
\newcommand{\set}[1]{\left\{#1\right\}}
\newcommand{\Real}{\mathbb R}
\newcommand{\spos}{\mathbb S_{++}}
\newcommand{\Natural}{\mathbb N}
\newcommand{\eps}{\varepsilon}
\newcommand{\such}{\ | \ }
\newcommand{\nin}{n \in \Natural}
\newcommand{\tir}{t \in \Real_+}
\newcommand{\prob}{\mathbb{P}}
\newcommand{\qprob}{\mathbb{Q}}
\newcommand{\expec}{\mathbb{E}}
\newcommand{\basisp}{(\Omega, \, \bF, \, \prob)}
\newcommand{\F}{\mathcal{F}}
\newcommand{\G}{\mathcal{G}}
\newcommand{\ud}{\mathrm d}
\newcommand{\inner}[2]{\big \langle #1 , #2 \big \rangle}
\newcommand{\limn}{\lim_{n \to \infty}}
\newcommand{\B}{\mathcal{B}}
\newcommand{\cS}{{}^\mathsf{c} \kern-0.21em S}
\newcommand{\cH}{{}^\mathsf{c} \kern-0.23em H}
\newcommand{\cMM}{{}^\mathsf{c} \kern-0.19em [M, M]}
\newcommand{\pare}[1]{\left(#1\right)}
\newcommand{\bra}[1]{\left[#1\right]}
\newcommand{\cbra}[1]{\left\{#1\right\}}
\newcommand{\dbra}[1]{[\kern-0.15em[ #1 ]\kern-0.15em]}
\newcommand{\dbraco}[1]{[\kern-0.15em[ #1 [\kern-0.15em[}
\newcommand{\dbraoc}[1]{]\kern-0.15em] #1 ]\kern-0.15em]}
\newcommand{\dbraoo}[1]{]\kern-0.15em] #1 [\kern-0.15em[}
\newcommand{\bF}{\mathbf{F}}
\newcommand{\Lb}{\mathbb{L}}
\newcommand{\dfn}{\, := \,}
\newcommand{\bG}{\mathbf{G}}
\newcommand{\indic}{\mathbb{I}}
\newcommand{\ze}{\zeta}
\newcommand{\kin}{k \in \Natural}
\newcommand{\nada}[1]{}
\newcommand{\indep}{\perp\!\!\!\perp}
\newcommand{\tbF}{\widetilde{\bF}}
\newcommand{\tOmega}{\widetilde{\Omega}}
\newcommand{\pih}{\widehat{\pi}}
\newcommand{\dvg}[1]{\textrm{div}\left(#1\right)}
\newcommand{\dvgalt}[1]{\textrm{div}_{(z,x)}\left(#1\right)}
\begin{document}

\title{Continuous-Time Perpetuities and Time Reversal of Diffusions}%

\author{Constantinos Kardaras}%
\address{Constantinos Kardaras, Department of Statistics, London School of Economics, 10 Houghton Street, London, WC2A 2AE, England.}%
\email{k.kardaras@lse.ac.uk}%

\author{Scott Robertson}
\address{Scott Robertson, Department of Mathematical Sciences, Carnegie Mellon University, Wean Hall 6113,  Pittsburgh, PA 15213, USA.}%
\email{scottrob@andrew.cmu.edu}%

\keywords{}%

\date{\today}%

\begin{abstract}
We consider the problem of estimating the joint distribution of a continuous-time perpetuity and the underlying factors which govern the cash flow rate, in an ergodic Markovian model. Two approaches are used to obtain the distribution. The first identifies a partial differential equation for the conditional cumulative distribution function of the perpetuity given the initial factor value, which under certain conditions ensures the existence of a density for the perpetuity. The second (and more general) approach, using techniques of time reversal, identifies the joint law as the stationary distribution of an ergodic multi-dimensional diffusion. This later approach allows for efficient use of Monte-Carlo simulation, as the distribution is obtained by sampling a single path of the reversed process.
\end{abstract}

\maketitle


\section*{Introduction} \label{sec: perpe}

\subsection*{Discussion}

In this article, we consider a \emph{continuous-time perpetuity} given by the random variable
\begin{equation}\label{eq: X_def}
X_0 \dfn \int_0^\infty D_t f(Z_t) \ud t.
\end{equation}
Above, $Z = \pare{Z_t}_{\tir}$ represents the value of an economic factor that determines a cash flow rate $\pare{f(Z_t)}_{\tir}$.  Cash flows are discounted according to $D = \pare{D_t}_{\tir}$; therefore, $X_0$ represents the whole payment in units of account at time zero. Our main concern is the identification of an efficient means to obtain the joint distribution of $(Z_0,X_0)$, as naive estimation of the distribution by simulating sample paths of $Z$ and approximating $X_0$ through numerical integration may be prohibitively slow. As $Z_0$ is typically observable, the joint distribution of $(Z_0,X_0)$ also allows us to obtain the conditional distribution of $X_0$ given $Z_0$.

In order to make the problem tractable, we work in a diffusive, Markovian environment where $Z$ and $D$ are solutions to the respective stochastic differential equations (written in integrated form)\footnote{Throughout the text, the prime symbol ($'$) denotes transposition.}
\begin{equation} \label{eq: Z_defn}
Z = Z_0 + \int_0^\cdot m(Z_t) \ud t + \int_0^\cdot \sigma(Z_t) \ud W_t,
\end{equation}
\begin{equation} \label{eq: D_defn}
D = 1 - \int_0^\cdot D_t \left(a(Z_t)\ud t + \theta(Z_t)' \sigma(Z_t)\ud W_t + \eta(Z_t)' \ud B_t \right).
\end{equation}
In the above equations, $W$ and $B$ are independent Brownian motions of dimension $d$ and $k$ respectively, while $m$, $\sigma$, $a$, $\theta$ and $\eta$ are given functions. (Precise assumptions on all the model coefficients are given in Section \ref{sec: setup}.) We assume $Z$ is stationary and ergodic with invariant density $p$. Equation \eqref{eq: D_defn} includes in particular the case when $D$ is smooth; in other words $D = \exp \pare{-\int_0^\cdot a (Z_t) \ud t}$, where $a$ represents a
short-rate function. However, the more general form of \eqref{eq: D_defn} is considered to accommodate a broader range of situations. For example:
\begin{itemize}
\item when payment streams are sometimes denominated in units of different account (for example, another currency, or financial assets), in which case discounting has to take into account the ``exchange rate''.
\item when, for pricing purposes, the payment stream, though denominated in domestic currency, must incorporate both traditional discounting and the density of the pricing kernel.
\end{itemize}

The two main results of the paper---Theorem \ref{thm: pde_soln} and Theorem \ref{thm:main}---identify the distribution of $(Z_0, X_0)$ in different ways. First, in the case where $\eta$ in \eqref{eq: D_defn} is non-degenerate and $f$ in \eqref{eq: X_def} is sufficiently regular, the conditional cumulative distribution function of $X_0$ given $Z_0$ is shown to coincide with the explosion probability of an associated locally elliptic diffusion and, hence, through the Feynman-Kac formula satisfies a partial differential equation (PDE): see Theorem \ref{thm: pde_soln}. Second, for general $\eta$ and $f$, using methods of diffusion time-reversal, we identify an ``ergodic'' process $(\ze,\chi)$ whose invariant distribution coincides with the joint distribution of $(Z_0,X_0)$. In particular, for any fixed starting point $x>0$ of $\chi$, the (random) empirical time-average law of $(\ze,\chi)$ on $[0,T]$ almost surely converges to the joint distribution of $(Z_0,X_0)$ in the weak topology: see Theorem \ref{thm:main}. The time-reversal result has the advantage of leading to an efficient method for obtaining the distribution via simulation, as the ergodic theorem enables estimation of the entire distribution based upon a single realization of $(\ze,\chi)$; a numerical example in Section \ref{S: num_ex} dramatically reinforces this point. However, it must be noted that the invariant distribution $p$ for $Z$ appears in the reversed dynamics, and hence must be known to perform simulation. When $Z$ is one-dimensional, or more generally, reversing, $p$ is given in explicit form with respect to the model parameters. In the general multi-dimensional setup, lack of knowledge of $p$ could pose an issue; however, we provide a potential way to amend the situation in the discussion after Theorem \ref{thm:main}. Note also that in the PDE result in Theorem \ref{thm: pde_soln}, explicit knowledge of $p$ is not necessary.



\subsection*{Existing literature and connections}
Obtaining the distribution of the perpetuity $X_0$ is of great importance in the areas of finance and actuarial science; for this reason, perpetuities with a form similar to $X_0$ have been extensively studied. For example, \cite{MR1129194} deals with the case where
\begin{equation*}
X_0 = \int_0^\infty e^{-\sigma B_t - \nu t} \ud t,
\end{equation*}
establishing that $X_0$ has an inverse gamma distribution. This fits into the set-up of \eqref{eq: Z_defn}, \eqref{eq: D_defn} by taking $a=\nu-\sigma^2/2$, $f=1$, $\theta=0$ and $\eta = \sigma$.  Note that here $Z$ plays no role. In a similar manner, \cite{yor2001bessel,MR1211975,Delbaen_1993} consider the case
\begin{equation*}
X_0 = \int_0^\infty e^{-\int_0^t Z_u \ud u} \ud t; \qquad \ud Z_t = \kappa(\theta-Z_t) \ud t + \xi \sqrt{Z_t} \ud W_t; \qquad E =(0,\infty),
\end{equation*}
and obtain the first moment, along with bounds for other moments, of $X_0$. In \cite{MR1480643}, the perpetuity takes the form
\begin{equation}\label{eq: levy_perp}
X_0 = \int_0^\infty e^{-Q_t} d P_t, \quad 
\text{with } P \text{ and } Q \text{ being independent L\'{e}vy processes}.
\end{equation}
Under certain conditions on $P$ and $Q$, the distribution of $X_0$ is implicitly calculated by identifying the characteristic function and/or Laplace transform for $X_0$. In fact, the results of \cite{MR1480643} are pre-dated (for highly particular $P$ and $Q$), in \cite{paulsen1993327, MR1386179}. The Laplace transform method is also used in \cite{paulsen1999,MR1440822} to treat \eqref{eq: levy_perp} when $P_t = t$ and $Q$ is a diffusion. In addition to identifying a degenerate elliptic partial differential equation for the Laplace transform, they propose a candidate recurrent Markov chain whose invariant distribution has the law of $X_0$.  Lastly, the setup of \cite{MR1480643} is significantly extended in \cite{MR1833691} where, under minimal assumptions on $P$ and $Q$, the distribution of $X_0$ is shown to coincide with the unique invariant measure for a certain generalized Ornstein-Uhlenbeck process, a relationship that is confirmed in our current setting in Proposition \ref{prop: backwards_ergodicity}.

The use of time-reversal to identify the distribution of a discrete-time perpetuity is well known, dating at least back to \cite{Dufresne_1992}, where $X_0$ takes the form
\begin{equation*}
X_0 = \sum_{n=1}^\infty \left(\prod_{i=1}^n D_i\right) f_n,
\end{equation*}
where the discount factors $\pare{D_n}_{\nin}$ and cash flows $\pare{f_n}_{\nin}$ are two independent sequences of independent, identically distributed (iid) random variables.  To provide insight, the time-reversal argument in \cite{Dufresne_1992} is briefly presented here. With $X_0^{(N)} \dfn \sum_{n=1}^N \left(\prod_{i=1}^n D_i\right) f_n$ it is clear by the iid property that $X_0^{(N)}$ has the same distribution as $\widetilde{X}_N \dfn D_N f_N + D_N D_{N-1} f_{N-1} + .... + \left(\prod_{j=1}^N D_j\right)f_1$. Straightforward calculations show that  the reversed process $(\widetilde{X}_n)_{\nin}$ satisfies the recursive equation $\widetilde{X}_{n} = D_{n} \big(\widetilde{X}_{n-1} + f_{n} \big)$. Thus, assuming that $(\widetilde{X}_n)_{\nin}$ converges to a random variable $\widetilde{X}$ in distribution, $\widetilde{X}$ must solve the distributional equation $\widetilde{X} = D (\widetilde{X}+f)$, where $D$,  $f$ and $\widetilde{X}$ are independent, $D$ has the same law as $D_1$ and $f$ has the same law as $f_1$. In \cite{MR544194} solutions to the aforementioned distributional equation are obtained based upon the expectation of $\log(|D|)$ and $\log_+(|Df|)$. The tails of $\widetilde{X}$, as well as convergence of iterative schemes, are studied in \cite{MR1311930}; furthermore, \cite{MR1797309} gives ``almost'' if and only if conditions for the convergence of iterative schemes.

In a continuous time setting, we employ an argument similar in spirit, but rather different in execution, to \cite{Dufresne_1992}. Specifically, we extend $X_0$ to a whole ``forward'' process $X \dfn (1/D) \int_\cdot^\infty D_t f(Z_t) d t$ and then, for each $T>0$ define the reversed process $(\ze^T,\chi^T)$ on $[0,T$] by $\ze^T_t \dfn Z_{T-t}$, $\chi^T_t \dfn X_{T-t}$: see \eqref{eq: X_P_def}, \eqref{eq: ze_chi_def}.  Using results on time reversal of diffusions from \cite{MR866342} (alternatively, see \cite{MR814118,MR656280,MR687295,MR787590}), as well as additional elementary calculations, we obtain the dynamics for $(\ze^T,\chi^T)$. In fact, Proposition \ref{prop: zeta_chi_t_dynamics} shows the generator of $(\ze^T,\chi^T)$ does not depend upon $T$ and ergodicity can be studied for the process $(\ze,\chi)$ with the given generator. When $|\eta| >0$ in $E$  and $f$ is sufficiently regular, this generator is locally elliptic and the associated process $(\ze,\chi)$ is ergodic with invariant distribution equalling that of  $(Z_0,X_0)$: see Proposition \ref{prop: backwards_ergodicity}. In the general case a slightly weaker (but still sufficient) form of ergodicity still holds: starting $\ze$ off its invariant distribution $p$ and $\chi$ off any starting point $x>0$, the (random) empirical time-average laws of $(\ze,\chi)$ converge almost surely in the weak topology to the distribution of $(Z_0,X_0)$.


\subsection*{Structure}
This paper is organized as follows: in Section \ref{sec: setup} we precisely state the given assumptions on the processes $Z$ and $D$, as well as the function $f$, paying particular attention to deriving sharp conditions under which $X_0$ is almost surely finite or infinite. The main results are then presented in Section \ref{sec: mr}. First, when $|\eta| > 0$ in $E$ and $f$ is sufficiently regular, the conditional cumulative distribution function of $X$ given $Z_0 =z$ is shown to satisfy a certain partial differential equation. Then, using the method of time reversal, we construct a probability space and diffusion $(\ze,\chi)$ such that with probability one its empirical time-average laws weakly converge to the joint distribution of $(Z_0,X_0)$ for all starting points of $\chi$.  Section \ref{sec: mr} concludes with a brief discussion how the distribution may be estimated via simulation, in particular proposing a method for obtaining the desired distribution when the invariant density $p$ for $Z$ is not explicitly known. Section \ref{S: num_ex} provides a numerical example in a specific case where the joint distribution of $(Z_0,X_0)$ is explicitly identifiable. Here, we compare the performance of the reversal method versus the direct method for obtaining the distribution of $X_0$. In particular we show that for a given desired level of accuracy (see Section \ref{S: num_ex} for a more precise definition), the method of time reversal is approximately $175$ to $300$ times faster than the direct method. The remaining sections contain the proofs: Section \ref{sec: X0_finite} proves the statements regarding the finiteness of $X_0$; Section \ref{sec: proof_of_pde} proves the partial differential equation result; Section \ref{sec: tr} obtains the dynamics for the time-reversed process $(\ze,\chi)$; Section \ref{sec: ze_chi_ergodic} proves the (weak) ergodicity with the correct invariant distribution. Finally, a number of technical supporting results are included in the appendix.

\section{Problem Setup}\label{sec: setup}

\subsection{Well-posedness and ergodicity}
The first order of business is to specify precise coefficient assumptions so that $Z$ in \eqref{eq: Z_defn} and $D$ in \eqref{eq: D_defn}, are well-defined. As for $Z$, we work in the standard locally elliptic set-up for diffusions: for more information, see \cite{MR1326606}. Let $E\subseteq\Real^d$ be an open, connected region. We assume the existence of $\gamma \in (0, 1]$ such that:
\begin{enumerate}
	\item[(A1)] there exists a sequence of regions $(E_n)_{\nin}$ such that $E = \bigcup_{n=1}^{\infty} E_n$, each $E_n$ being open, connected, bounded, with $\partial E_n$ being $C^{2,\gamma}$ and satisfying $\bar{E}_n\subset E_{n+1}$ for all $\nin$.
	\item[(A2)] $m\in C^{1,\gamma}(E;\Real^d)$ and $c\in C^{2,\gamma}(E;\spos^d)$, where $\spos^d$ is the space of symmetric and strictly positive definite $(d \times d)$-dimensional matrices.
\end{enumerate}

With the provisos in (A1) and (A2), define $L^Z$ as the generator associated to $(m,c)$:
\begin{equation}\label{eq: LZ}
L^Z \dfn \frac{1}{2}\sum_{i,j=1}^d c^{ij}\partial^2_{ij} + \sum_{i=1}^d m^i\partial_i.\ \footnote{In the sequel the summands will be omitted using Einstein's convention; therefore, $L^Z$ is written as $L^Z = (1/2)c^{ij}\partial^2_{ij} + m^i\partial_i$.}
\end{equation}
Under (A1) and (A2), one can infer the existence of a solution to the martingale problem for $L^Z$ on $E$, with the possibility of explosion to the boundary of $E$ : see \cite{MR1326606} We wish for something stronger; namely, to construct a filtered probability space $\basisp$ on which there is a strong, stationary, ergodic solution to the SDE in \eqref{eq: Z_defn} with invariant density $p$. In \eqref{eq: Z_defn}, $W$ is a $d$-dimensional Brownian Motion and $\sigma = \sqrt{c}$, the unique positive definite symmetric matrix such that $\sigma^2 = c$.  In order to achieve this, we ask that

\begin{enumerate}
	\item[(A3)] The martingale problem for $L^Z$ on $E$ is well posed and the corresponding solution is recurrent.  Furthermore, there exists a strictly positive $p\in C^{2,\gamma}(E,\Real)$ with $\int_E p(z) \ud z = 1$ satisfying $\tilde{L}^Z p = 0$, where $\tilde{L}^Z$ is the formal adjoint of $L^Z$ given by
\begin{equation}\label{eq: tildeLZ}
\tilde{L}^Z \dfn \frac{1}{2}c^{ij}\partial^2_{ij} -\left(m^i-\partial_jc^{ij}\right)\partial_i -\left(\partial_i m^i -\frac{1}{2}\partial^2_{ij} c^{ij}\right).
\end{equation}
\end{enumerate}

%

We summarize the situation in the following result: the extra Brownian motion $B$ in its statement will be used to define the process $D$ via \eqref{eq: D_defn} later on.

\begin{thm} \label{thm: Z_existence}
Under assumptions (A1), (A2) and (A3), there exists a filtered probability space $\basisp$ satisfying the usual conditions supporting two independent Brownian motions $W$ and $B$, $d$-dimensional and $k$-dimensional respectively, such that $Z$ satisfies \eqref{eq: Z_defn} and is stationary and ergodic with invariant density $p$.
\end{thm}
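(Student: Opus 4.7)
The plan is to assemble the result by combining three standard ingredients: the martingale-problem theory of locally elliptic diffusions (available under (A1)--(A2)), the invariant-density characterization of stationarity furnished by (A3), and the passage from weak to strong solutions via pathwise uniqueness.

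First I would invoke (A3) to obtain a unique-in-law Markov family $(Q_z)_{z\in E}$ of solutions to the martingale problem for $L^Z$ on $E$, as in \cite{MR1326606}. The identity $\tilde L^Z p=0$ with $p$ a probability density is precisely the adjoint condition saying that $p(z)\,\ud z$ is an invariant measure for the associated semigroup. Taking the initial law to be $p(z)\,\ud z$, i.e.\ working under $Q \dfn \int_E Q_z\, p(z)\,\ud z$, produces a stationary Markov process with the desired marginal law. Ergodicity can then be deduced from recurrence: under the local ellipticity in (A2), the semigroup is irreducible, so the invariant probability measure is unique, which together with positive recurrence yields triviality of the shift-invariant $\sigma$-algebra and hence ergodicity.

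Second, I would upgrade this weak construction to a strong one driven by an actual Brownian motion $W$. Assumption (A2) makes $m$ locally Lipschitz; moreover $\sigma=\sqrt{c}$ inherits local Lipschitz regularity from $c$ because the matrix square-root is smooth on the open cone $\spos^d$ and $c\in C^{2,\gamma}(E;\spos^d)$. Classical SDE theory therefore yields pathwise uniqueness up to the explosion time. Recurrence from (A3) rules out explosion, so Yamada--Watanabe (weak existence from the martingale problem plus pathwise uniqueness) produces a strong solution on a probability space carrying a $d$-dimensional Brownian motion $W$, with the same law as under $Q$ and hence stationary and ergodic with invariant density $p$.

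Finally, to incorporate the auxiliary $k$-dimensional Brownian motion $B$ needed in \eqref{eq: D_defn}, I would enlarge the space by taking the product with an independent Wiener space carrying $B$, and complete and right-continuously augment the resulting filtration $\bF$ to satisfy the usual conditions; this preserves all properties of $(Z,W)$ and makes $W$ and $B$ independent Brownian motions in $\bF$. The only mildly technical step is the regularity of $\sigma=\sqrt{c}$ on $\spos^d$ that feeds into pathwise uniqueness; everything else is bookkeeping around results already available in \cite{MR1326606}, so I do not anticipate a genuine obstacle beyond citing and stitching together the relevant theorems.
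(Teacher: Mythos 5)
The paper states Theorem~\ref{thm: Z_existence} without giving a proof---it is presented as a consequence of the standard theory in \cite{MR1326606} and immediately followed by a remark. So there is no paper proof to compare against, and the natural criterion is whether your construction is sound and fills the gap in a reasonable way. It does, and each of your three steps is the standard one.

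A few points worth making explicit if you were to write this out. First, $\tilde L^Z p = 0$ alone is necessary but not sufficient for $p\,\ud z$ to be the invariant measure; what rescues you is precisely the recurrence clause in (A3), which in the Pinsky framework (e.g.\ Theorem~4.8.5 in \cite{MR1326606}, whose $h$-transform argument the paper itself reuses in the proof of Proposition~\ref{prop: zeta_t_dynamics}) upgrades the candidate density to an actual invariant probability measure and gives positive recurrence. You gesture at this by writing ``together with positive recurrence,'' but (A3) only states recurrence; the promotion to positive recurrence comes from the existence of the invariant density, so the logical order is slightly different from the way you phrase it. Second, the regularity claim that $\sigma = \sqrt{c}$ is locally Lipschitz is indeed the key technical point feeding Yamada--Watanabe, and it does follow from $c\in C^{2,\gamma}(E;\spos^d)$ because the principal square root is real-analytic on $\spos^d$; since the paper takes $\sigma$ to be the symmetric positive-definite square root, this is exactly the composition you need. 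Third, the product enlargement to adjoin the independent Brownian motion $B$ followed by the usual augmentation is routine and preserves stationarity, ergodicity and the law of $(Z,W)$. Altogether your argument is correct and is the proof the paper leaves implicit.
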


\begin{rem}
According to \cite[Corollary 5.1.11]{MR1326606}, in the one-dimensional case, where $E=(\alpha,\beta)$ for $-\infty\leq\alpha < \beta\leq\infty$, the above assumption (A3) is true if and only if  for some $z_0\in E$
\begin{align*}
\int_\alpha^{z_0} \exp \pare{-2\int_{z_0}^{z}\frac{m(s) }{c(s)} \ud s} \ud z &= \infty, \\
\int_{z_0}^{\beta} \exp \pare{-2\int_{z_0}^{z}\frac{m(s)}{c(s)} \ud s}\ud z  &= \infty, \\
\int_{\alpha}^{\beta}\frac{1}{c(z)} \exp \pare{2\int_{z_0}^{z}\frac{m(s)}{c(s)} \ud s}\ud z &< \infty.
\end{align*}
In this case, it holds that
\[
p(z) = Kc^{-1}(z)\exp\pare{2\int_{z_0}^z \frac{m(s)}{c(s)} \ud s}, \quad z \in (\alpha, \beta),
\]
where $K > 0$ is a normalizing constant.

In the multi-dimensional case, suppose that there exists a function $H: E \mapsto \Real$ with the property that $c^{-1}(2m - \dvg{c}) = \nabla H$, where $\dvg{c}$ is the (matrix) divergence defined by\footnote{This definition is equivalent to the standard definition of divergence for matrices, where the divergence operator is applied to the columns, by the symmetry of $c$. Also, to differentiate the matrix divergence from its vector counterpart, we will write $\dvg{A}$ for symmetric matrices $A$ and $\nabla\cdot v$ for vector valued functions $v$.} $\dvg{c}^i = \partial_jc^{ij}, i = 1,...,d$. Then, $Z$ is a reversing Markov process. Furthermore Assumption (A3) follows if it can be shown that $Z$ does not explode to the boundary of $E$ and $K \dfn \int_E \exp(H(z)) \ud z < \infty$. Indeed, by construction $p =  e^H/K$ satisfies $\tilde{L}^Zp = 0$, $\int_E p(z)\ud z = 1$. Thus, if $Z$ does not explode, it follows from \cite[Theorem 2.8.1, Corollary 4.9.4]{MR1326606} that $Z$ is recurrent. In fact, $Z$ is ergodic, as shown in \cite[Theorems 4.3.3, 4.9.5]{MR1326606}. Absent the reversing case, there are many known techniques for checking ergodicity---see \cite{MR0494525, MR1326606}. For example, if there exist a smooth function $u: E \mapsto \Real$, an integer $N$ and constants $\eps > 0$ and $C > 0$ such that $L^Zu\leq -\eps$ and $u\geq -C$ on $E\setminus E_N$, then (A3) holds.
\end{rem}

In order to ensure that $D$ in \eqref{eq: D_defn} is well defined, we assume that
\begin{enumerate}
	\item[(A4)] $a\in C^{1,\gamma}(E;\Real_+)$, $\eta\in C^{2,\gamma}(E; \Real^k)$, and $\theta\in C^{2,\gamma}(E; \Real^d)$.
\end{enumerate}
Given (A4) and all previous assumptions, it follows that \eqref{eq: D_defn} possesses a strong solution on $\basisp$ of Theorem \ref{thm: Z_existence}; in fact, defining $R \dfn - \log(D)$, it holds that
\begin{equation}\label{eq: R_defn}
R = \int_0^\cdot \pare{a  + \frac{1}{2}\pare{\theta'c\theta + |\eta|^2 }} (Z_t)
\ud t +  \int_0^\cdot \theta(Z_t)' \sigma(Z_t)\ud W_t +  \int_0^\cdot \eta(Z_t)'\ud B_t.
\end{equation}


\subsection{Finiteness of $X_0$}\label{subsec: X_finite}

Having the set-up for the existence of $Z$ and $D$, we proceed to $X_0$. For the time being, we shall just assume that the function $f : E \mapsto \Real_+$ is in $\Lb^1(E,p)$\footnote{We define $\Lb^1(E,p)$ to be those Borel measurable functions $g$ on $E$ so that $\int_D |g(z)|p(z)dz < \infty$. Thus, Borel measurability is implicitly assumed throughout.}. For the PDE results of Theorem \ref{thm: pde_soln} below we require a slightly stronger regularity assumption on $f$, though the time-reversal results of Theorem \ref{thm:main} make no additional assumptions. Now, for $f$ not necessarily in $\Lb^1(E,p)$, it is entirely possible that $X_0$ takes infinite values with positive probability. In this section, conditions are given under which $\prob\bra{X_0 < \infty} = 1$ or, conversely, when $\prob\bra{X_0 < \infty} = 0$.


\begin{lem}\label{lem: X_finite_A}
Let (A1), (A2), (A3) and (A4) hold. For the invariant density $p$ of $Z$, assume there exists $\eps > 0$ such that
\begin{equation}\label{eq: eps_finite_cond}
 \left(a+\frac{1-\eps}{2}(\theta'c\theta + \eta'\eta)\right)_{-} \in \Lb^1(E, p), \text{ and } \int_E\left(a+\frac{1-\eps}{2}(\theta'c\theta
  +\eta'\eta)\right)(z) \, p(z) \ud z  > 0.
\end{equation}
Then, the following hold:
\begin{enumerate}[i)]
\item There exists $\kappa > 0$ such that for all $z\in E$, $\prob\bra{\lim_{t\to\infty} e^{\kappa t} D_t = 0\such Z_0 =z}=1$. In particular, $\lim_{t\to\infty} e^{\kappa t}D_t = 0$ $\prob$ a.s..
\item For any $f\in \Lb^1 (E, p)$, it holds that $\prob\bra{X_0 < \infty}= 1$.
\end{enumerate}

\end{lem}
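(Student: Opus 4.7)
The plan is to prove (i) by establishing a positive exponential rate of decay for $D_t$ via the ergodic theorem applied to $R = -\log D$, and then to deduce (ii) by a Fubini/invariant-density computation.

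For (i), I would start from \eqref{eq: R_defn} and, with $\eps \in (0,1)$ chosen as in \eqref{eq: eps_finite_cond}, rewrite $R_t$ in the decomposition
\[
R_t \;=\; \int_0^t g(Z_s)\,\ud s \;+\; \frac{\eps}{2}\int_0^t h(Z_s)\,\ud s \;+\; M_t,
\]
where
\[
g \dfn a + \tfrac{1-\eps}{2}\bigl(\theta' c\,\theta + |\eta|^2\bigr),
\qquad
h \dfn \theta' c\,\theta + |\eta|^2,
\]
and $M_t \dfn \int_0^t \theta(Z_s)'\sigma(Z_s)\,\ud W_s + \int_0^t \eta(Z_s)'\,\ud B_s$ is a continuous local martingale with $\langle M\rangle_t = \int_0^t h(Z_s)\,\ud s$. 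The stationarity and ergodicity of $Z$ furnished by Theorem \ref{thm: Z_existence}, combined with the integrability of $g_-$ in \eqref{eq: eps_finite_cond}, allow Birkhoff's ergodic theorem to give
\[
\frac{1}{t}\int_0^t g(Z_s)\,\ud s \;\xrightarrow[t\to\infty]{}\; \mu \dfn \int_E g(z)\,p(z)\,\ud z \;\in\; (0,+\infty]
\]
almost surely, and under $\prob[\,\cdot \,|\, Z_0 = z]$ for every $z \in E$ by the standard extension to every starting point for a recurrent ergodic diffusion.

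Next I would absorb the martingale term into $\tfrac{\eps}{2}\langle M\rangle_t$. Split on $\{\langle M\rangle_\infty < \infty\}$ and its complement. On the former, $M_t$ converges almost surely, so $\tfrac{\eps}{2}\langle M\rangle_t + M_t$ stays bounded and $t^{-1}(\tfrac{\eps}{2}\langle M\rangle_t + M_t)\to 0$. On $\{\langle M\rangle_\infty = \infty\}$, the strong law of large numbers for continuous local martingales yields $M_t/\langle M\rangle_t \to 0$ a.s., so eventually $\tfrac{\eps}{2}\langle M\rangle_t + M_t \ge \tfrac{\eps}{4}\langle M\rangle_t \ge 0$. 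In either case $\liminf_{t\to\infty} t^{-1}(\tfrac{\eps}{2}\langle M\rangle_t + M_t) \ge 0$, and combined with the ergodic limit for $g$,
\[
\liminf_{t\to\infty} \frac{R_t}{t} \;\ge\; \mu \;>\; 0 \qquad \prob\text{-a.s.}
\]
Choosing any $\kappa \in (0, \mu)$ gives $R_t - \kappa t \to \infty$, i.e.\ $e^{\kappa t} D_t = e^{-(R_t - \kappa t)} \to 0$, conditionally on $Z_0 = z$ for each $z \in E$ and hence unconditionally.

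For (ii), with $\kappa>0$ as in (i), the limit $e^{\kappa t} D_t \to 0$ provides a (random) finite constant $C = \sup_{t\ge 0} e^{\kappa t} D_t < \infty$ a.s., so
\[
X_0 \;\le\; C \int_0^\infty e^{-\kappa t} f(Z_t)\,\ud t.
\]
By Fubini and stationarity of $Z$,
\[
\expec\bra{\int_0^\infty e^{-\kappa t} f(Z_t)\,\ud t} \;=\; \int_0^\infty e^{-\kappa t}\!\int_E f(z)\,p(z)\,\ud z\,\ud t \;=\; \frac{1}{\kappa}\int_E f\,p \;<\; \infty,
\]
so the integral is a.s.\ finite and $\prob[X_0 < \infty] = 1$ follows.

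The main obstacle is the step that controls the martingale: we are not assuming $h \in \Lb^1(E,p)$, so we cannot directly apply the ergodic theorem to $\langle M\rangle_t/t$. The trick is to keep the $\tfrac{\eps}{2}\langle M\rangle_t$ piece of the drift attached to $M_t$ to make the sum a.s.\ eventually non-negative via the martingale SLLN, while the truly integrable combination $g$ carries the positive linear growth. Everything else is a routine application of Birkhoff's theorem and Fubini.
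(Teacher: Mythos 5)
Your proof is correct and essentially the same as the paper's: decompose $R_t = \int_0^t g(Z_s)\,\ud s + \frac{\eps}{2}\langle M\rangle_t + M_t$ with $g = a + \frac{1-\eps}{2}(\theta'c\theta+|\eta|^2)$, apply the ergodic theorem to $g$ and the martingale strong law to the last two terms to get $\liminf R_t/t \ge \int_E g\,p > 0$, then deduce (ii) from the Fubini bound $X_0 \le C\int_0^\infty e^{-\kappa t}f(Z_t)\,\ud t$. The only cosmetic difference is that the paper splits the argument into the cases $\theta'c\theta + \eta'\eta \equiv 0$ and $\not\equiv 0$ (invoking Dambis--Dubins--Schwarz and the Brownian SLLN in the latter, where $\langle M\rangle_\infty = \infty$ a.s.), whereas your dichotomy on $\{\langle M\rangle_\infty < \infty\}$ versus $\{\langle M\rangle_\infty = \infty\}$ folds both cases into a single unified presentation.
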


\begin{rem} Note that \eqref{eq: eps_finite_cond} holds if $a>0$ on $E$. The more complicated form in \eqref{eq: eps_finite_cond} allows $a$ to take (unbounded) negative values. Furthermore, in the case where $\left(\theta'c\theta + \eta'\eta\right) \in \Lb^1(E, p)$ then equation \eqref{eq: eps_finite_cond} is equivalent to:
\begin{equation}\label{eq: no_eps_finite_cond}
\left(a+\frac{1}{2}(\theta'c\theta + \eta'\eta)\right)_{-}\in \Lb^1(E,p), \text{ and } \int_E\left(a+\frac{1}{2}(\theta'c\theta
  +\eta'\eta)\right)(z) \, p(z) \ud z  > 0.
\end{equation}
\end{rem}

As a partial converse to Lemma \ref{lem: X_finite_A} we have

\begin{lem}\label{lem: X_infinite_A}
Let (A1), (A2), (A3) and (A4) hold. For the invariant density $p$ of $Z$, assume there exists $\eps > 0$ such that
\begin{equation}\label{eq: eps_infinite_cond}
 \left(a+\frac{1 + \eps}{2}(\theta'c\theta + \eta'\eta)\right)_{+} \in \Lb^1(E, p), \text{ and } \int_E\left(a+\frac{1 + \eps}{2}(\theta'c\theta
  +\eta'\eta)\right)(z) \, p(z) \ud z  \leq 0.
\end{equation}
(If $\theta'c\theta + \eta'\eta\equiv 0$, then assume that $a_+\in \Lb^1(E,p)$ and $\int_E a(z) p(z) \ud z < 0$.) If $f$ is such that $\int_E f(z) p(z) \ud z > 0$, then $\prob\bra{X_0 < \infty} = 0$.
\end{lem}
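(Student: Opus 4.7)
The idea is to show that under \eqref{eq: eps_infinite_cond}, the discount factor $D_t$ almost surely grows exponentially, so that combined with the ergodic behaviour of $Z$ (which forces $\int_0^t f(Z_s)\, ds$ to grow linearly, since $\int_E f\,p\, dz > 0$), the random variable $X_0$ must be infinite. Set $g_\eps := a + \frac{1+\eps}{2}(\theta'c\theta + |\eta|^2)$. First I would reduce to the strict inequality $\int_E g_\eps\, p\, dz < 0$ by, if necessary, shrinking $\eps$ to $\eps/2$: since $g_{\eps/2}\le g_\eps$, the integrability $(g_{\eps/2})_+\in \Lb^1(E,p)$ is inherited, and $\int g_{\eps/2}\,p\,dz = \int g_\eps\, p\, dz - \tfrac{\eps}{4}\int(\theta'c\theta+|\eta|^2)\,p\, dz$ is strictly negative (possibly $-\infty$) unless the latter integral vanishes. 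In that exceptional case, continuity and strict positivity of $p$ force $\theta \equiv 0$ and $\eta \equiv 0$ on $E$, placing us in the degenerate regime handled by the separate strict hypothesis $\int_E a\,p\, dz < 0$.

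Next, using \eqref{eq: R_defn}, I decompose $R_t = A_t + Q_t + \tfrac{1}{2}\langle Q\rangle_t$ with $A_t := \int_0^t a(Z_s)\, ds$ and the continuous local martingale $Q_t := \int_0^t \theta(Z_s)'\sigma(Z_s)\, dW_s + \int_0^t \eta(Z_s)'\, dB_s$, whose bracket is $\langle Q\rangle_t = \int_0^t(\theta'c\theta + |\eta|^2)(Z_s)\, ds$. The key step is to extract the pathwise upper bound
\[
Q_t \;\le\; \tfrac{\eps}{2}\langle Q \rangle_t + C(\omega), \qquad t \ge 0,
\]
which I derive by observing that the stochastic exponential $\mathcal{E}(\eps Q)_t := \exp\!\pare{\eps Q_t - \tfrac{\eps^2}{2}\langle Q \rangle_t}$ is a non-negative continuous local martingale, hence a supermartingale converging almost surely to a finite limit; continuity of paths then gives that $\sup_{t \ge 0}\log \mathcal{E}(\eps Q)_t$ is almost surely finite, which yields the displayed bound with $C(\omega) := \eps^{-1}\sup_{t \ge 0}\log \mathcal{E}(\eps Q)_t(\omega)$. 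Substituting into the decomposition of $R_t$ produces $R_t \le \int_0^t g_\eps(Z_s)\, ds + C(\omega)$ almost surely.

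Now Birkhoff's ergodic theorem applied to $g_\eps$ (using $(g_\eps)_+ \in \Lb^1(E,p)$ and the stationary ergodicity of $Z$ furnished by Theorem \ref{thm: Z_existence}) gives $\tfrac{1}{t}\int_0^t g_\eps(Z_s)\, ds \to \int_E g_\eps\, p\, dz \in [-\infty, 0)$ almost surely. Combined with the previous bound, there exists a deterministic $\kappa > 0$ such that $R_t \le -\kappa t$ for all sufficiently large $t$ a.s., so $D_t \ge e^{\kappa t}$ eventually. A further application of Birkhoff to $f$ gives $\tfrac{1}{T}\int_T^{2T} f(Z_s)\, ds \to \int_E f\,p\, dz > 0$ a.s., whence
\[
X_0 \;\ge\; \int_T^{2T} D_t f(Z_t)\, dt \;\ge\; e^{\kappa T}\int_T^{2T} f(Z_s)\, ds \;\longrightarrow\; \infty \quad (T \to \infty),
\]
so $X_0 = \infty$ almost surely. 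In the degenerate case ($Q\equiv 0$) the same scheme applies directly to $R_t = A_t$, now with the strict inequality $\int_E a\,p\, dz < 0$ supplying the needed ergodic decay.

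I expect the main obstacle to be the pathwise bound on $Q_t$ in terms of $\tfrac{\eps}{2}\langle Q\rangle_t$: the subtlety is that $\langle Q \rangle_\infty$ may well be infinite (a regime in which a naive strong-law argument such as $Q_t/t \to 0$ is unavailable), and yet the a.s.\ convergence of $\mathcal{E}(\eps Q)$---which holds regardless of whether $\langle Q \rangle_\infty$ is finite or not---is precisely what converts the ``inflated'' variance coefficient $\tfrac{1+\eps}{2}$ in the hypothesis into the sought exponential growth of $D_t$. The deterministic reduction to the strict version of the hypothesis is also mildly delicate in the case $\int(\theta'c\theta+|\eta|^2)p\, dz = +\infty$, but only requires the monotonicity $g_{\eps/2}\le g_\eps$ and the Lebesgue conventions for $[-\infty,0)$-valued integrals.
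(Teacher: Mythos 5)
Your proof is correct, and the key estimate is obtained by a genuinely different mechanism than the paper's. The paper proves the exponential decay of $D^{-1}$ by the same route it used for Lemma~\ref{lem: X_finite_A}: split into the cases $\theta'c\theta+\eta'\eta\equiv 0$ and $\not\equiv 0$, note that in the latter case positive recurrence forces $\langle Q\rangle_\infty=\infty$, and then use Dambis--Dubins--Schwarz plus the strong law for Brownian motion to deduce $Q_t/\langle Q\rangle_t\to 0$. Your observation that the non-negative continuous local martingale $\mathcal{E}(\eps Q)$ converges a.s.\ (hence is pathwise bounded) is an elegant alternative: it delivers the pathwise inequality $Q_t\le \tfrac{\eps}{2}\langle Q\rangle_t + C(\omega)$ with no case split and regardless of whether $\langle Q\rangle_\infty$ is finite, after which a single application of Birkhoff to $g_\eps$ finishes the drift estimate. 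Your preliminary reduction to the strict inequality $\int_E g_\eps\,p\,dz<0$ by passing to $\eps/2$ is also a sensible clarification: the paper's statement ``there is $\kappa\ge 0$ and a.s.-finite $T$ with $-R_t\ge\kappa t$ for $t\ge T$'' is delicate precisely when $\int g_\eps\,p\,dz=0$, and keeping half the $\eps$-room in reserve (as you do) is what makes the ergodic estimate give $\kappa>0$ strictly whenever $\theta'c\theta+\eta'\eta\not\equiv 0$.

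One small point you should tighten: the lemma is stated only under $\int_E f\,p\,dz>0$, not $f\in\Lb^1(E,p)$, and if $\int_E f\,p\,dz=+\infty$ the limit $\tfrac{1}{T}\int_T^{2T}f(Z_s)\,ds\to\int_E f\,p\,dz$ is not justified (one is comparing two Ces\`aro averages both tending to $+\infty$). The paper avoids this by first replacing $f$ with $f\wedge N$, with $N$ large enough that $\int_E(f\wedge N)\,p\,dz>0$; your argument goes through verbatim after the same truncation. Alternatively, you can avoid sending $T\to\infty$ entirely: once $D_t\ge e^{\kappa t}\ge 1$ for $t\ge T$ with $T$ a.s.\ finite, write $X_0\ge \int_T^S f(Z_t)\,dt$ for every $S>T$ and let $S\to\infty$ with $T$ fixed, using Birkhoff (for non-negative integrands) on $[0,S]$.
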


\begin{rem}
Let (A1), (A2), (A3) and (A4) hold, and assume that $a$ is non-negative. A combination of Lemma \ref{lem: X_finite_A} and Lemma \ref{lem: X_infinite_A} yield sharp conditions for the finiteness of $X_0$ that do not require knowledge of $p$, at least for bounded $f$.
\begin{itemize}
	\item If $a + (1/2)(\theta'c\theta + \eta'\eta) \not\equiv 0$, then
  $\prob\bra{X_0 < \infty} = 1$ holds if $f\in \Lb^1(E,p)$.
	\item If $ a + (1/2)(\theta'c\theta + \eta'\eta) \equiv 0 $ then $\prob\bra{X_0 < \infty} = 0$ holds if $\int_E f(z) p(z) \ud z > 0$.
\end{itemize}
\end{rem}

In view of Lemma \ref{lem: X_finite_A}, we ask that

\begin{enumerate}
	\item[(A5)] $f \in \Lb_+^1(E,p)$, $\int_E f(z) p(z) \ud z > 0$, and there exists $\eps > 0$ such that
\begin{equation}\label{eq: eps_finite_cond_2}
 \left(a+\frac{1-\eps}{2}(\theta'c\theta + \eta'\eta)\right)_{-} \in \Lb^1(E, p), \text{ and } \int_E\left(a+\frac{1-\eps}{2}(\theta'c\theta
  +\eta'\eta)\right)(z) \, p(z) \ud z  > 0.
\end{equation}
\end{enumerate}


To recapitulate, for the remainder of the article the following is assumed:

\begin{ass}\label{ass: standing}
We enforce throughout all above assumptions (A1), (A2), (A3), (A4) and (A5).
\end{ass}

\section{Main Results}\label{sec: mr}

\subsection{The distribution of $X_0$ via a partial differential equation}\label{subsec: pde}

Define the cumulative distribution function $g$ of  $X_0$ given $Z_0$ by
\begin{equation}\label{eq: cond_cdf}
g(z,x) \dfn  \prob\bra{X_0 \leq x \such  Z_0 = z}, \quad (z, x) \in F\dfn E \times (0, \infty).
\end{equation}
Next, recall that Assumption \ref{ass: standing} implies that $Z_0$ has a density $p$, and define the joint distribution $\pi$ of $(Z_0,X_0)$ by
\begin{equation}\label{eq: pi_def}
\pi(A) \dfn  \iint_A p(z)g(z,\ud x)\ud z;\qquad A\in\mathcal{B}(F).
\end{equation}

Under Assumption \ref{ass: standing}, as well as an additional smoothness requirement on $f$ and non-degeneracy requirement on $\eta$, the first main result (Theorem \ref{thm: pde_soln} below) shows $g$ solves a certain PDE on the state space $F$. This will imply that the joint distribution of $(Z_0, X_0)$ has a density (still labeled $\pi$) and the law of $X_0$ charges all of $(0,\infty)$.

To motivate the result, as well as to fix notation, for each $x \in (0, \infty)$, consider the process
\begin{equation}\label{eq: hat_x_process}
Y^x \dfn \frac{1}{D }\left(x -
  \int_0^\cdot D_t f(Z_t)\ud t\right).
\end{equation}
Since Assumption \ref{ass: standing} implies $\prob\bra{\lim_{t\to\infty}D_t = 0\such Z_0 =z} =1$ for all $z\in E$, it is clear that given $Z_0 = z$, on $\cbra{X_0 < x}$ the process $Y^x$ tends to $\infty$.  Alternatively, on $\cbra{X_0 > x}$, $Y^x$ will hit $0$ at some finite time. What happens on $\cbra{X_0 = x}$ is not immediately clear but it will be shown under the given assumptions there is no probability of this occurring. For fixed $(z,x) \in F$, it follows that $1-g(z,x)$ equals the probability that $Y^x$ hits zero, given $Z_0 = z$.  According the Feynman-Kac formula, such probabilities ``should'' solve a PDE.  To identify the PDE, note that the joint equations governing $Z$ and $Y^x$ are
\begin{align*}
Z &= Z_0 + \int_0^\cdot m(Z_t)\ud t + \int_0^\cdot \sigma(Z_t)\ud W_t, \\
Y^x &= x + \int_0^\cdot \left(-f(Z_t) + Y^x_t\left(a(Z_t) + \theta'c\theta(Z_t)
    + \eta'\eta(Z_t)\right)\right)\ud t \\
& \quad + \int_0^\cdot Y^x_t\left(\theta'\sigma(Z_t)\ud W_u +
  \eta(Z_t)'\ud B_t\right).
\end{align*}
Define $b:F\mapsto\Real^{d+1}$ and $A: F\mapsto
\spos^{d+1}$ by
\begin{equation}\label{eq: bA_def}
b(z,x) \dfn \left(\begin{array}{c} m(z) \\ -f(z) +
    x\left(a+\theta'c\theta+\eta'\eta\right)(z)\end{array}\right);\quad A(z,x)
\dfn \left(\begin{array}{c c} c(z) & x c\theta(z) \\ x\theta'c(z)
    & x^2\left(\theta'c\theta + \eta'\eta\right)(z)\end{array}\right),
\end{equation}
for all $(z,x) \in F$. Note that if, in addition to Assumptions \ref{ass: standing}, $|\eta|(z) > 0, z\in E$ then $A$ is locally elliptic. Let $L$ be the second order differential operator associated to $(A,b)$, i.e.,
\begin{equation}\label{eq: L_def}
L\dfn \frac{1}{2}A^{ij}\partial^2_{ij} + b^i \partial_i.
\end{equation}
Note that $L\phi = L^{Z}\phi$ for functions $\phi$ of $z\in E$ alone.  With the previous notation, the first main result now follows.

\begin{thm}\label{thm: pde_soln}
Let Assumptions \ref{ass: standing} hold, and suppose further that a) $f\in C^{1,\gamma}(E;\Real_+)$ and b) $|\eta (z)| > 0$ for all $z\in E$. Then, $g \in C^{2,\gamma}(F)$ satisfies $Lg = 0$ with the following ``locally uniform'' boundary
conditions
\begin{equation}\label{eq: g_bc}
\lim_{n\to\infty} \sup_{x\leq n^{-1} ,z\in E_{k}} g(z,x) = 0;\qquad
\lim_{n\to\infty} \inf_{x\geq n ,z\in E_{k}} g(z,x) = 1, \quad \forall \kin.
\end{equation}
Furthermore, $g$ is unique within the class of solutions to $Lg = 0$ taking values in $[0,1]$  with the above boundary conditions.
\end{thm}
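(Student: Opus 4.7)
The strategy is to recast $1-g$ as a hitting-time probability for the joint diffusion $(Z, Y^x)$, apply elliptic PDE theory to that hitting problem, and then identify the resulting smooth function with $g$.

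First, I would exploit the pathwise identity $D_t Y^x_t = x - \int_0^t D_s f(Z_s)\ud s$, which is continuous and monotone non-increasing in $t$. Combined with Lemma \ref{lem: X_finite_A} (giving $D_t \to 0$ and $X_0 < \infty$ a.s.), this forces: on $\cbra{X_0 < x}$, $D_t Y^x_t \downarrow x - X_0 > 0$ while $D_t \to 0$, hence $Y^x_t \to \infty$ without touching zero; on $\cbra{X_0 > x}$, the continuous process $D_t Y^x_t$ starts positive and becomes negative, so it must cross $0$ in finite time, and since $D > 0$ pathwise, the hitting time $\tau^x \dfn \inf\cbra{t \geq 0 : Y^x_t = 0}$ is finite. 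Writing $h(z,x) \dfn \prob\bra{\tau^x < \infty \such Z_0 = z}$, this gives the sandwich
\begin{equation*}
1 - g(z, x) \ \leq \ h(z,x) \ \leq \ 1 - g(z, x^{-}), \qquad (z,x) \in F.
\end{equation*}

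Second, I would apply standard PDE theory to $h$. The joint process $(Z, Y^x)$ on $F$ has drift $b$ and diffusion matrix $A$ from \eqref{eq: bA_def}; a Schur-complement calculation yields $\det A(z,x) = x^2 |\eta(z)|^2 \det c(z)$, which is strictly positive on $F$ under hypothesis (b), so $L$ is locally uniformly elliptic on $F$. Under hypothesis (a) together with Assumption \ref{ass: standing}, the coefficients satisfy $A \in C^{2,\gamma}$ and $b \in C^{1,\gamma}$. Standard Feynman--Kac/Schauder theory for hitting probabilities of locally elliptic diffusions (cf.\ \cite[Ch.\ 4]{MR1326606}) then yields $h \in C^{2,\gamma}(F)$ with $Lh = 0$. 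Since $h$ is continuous in $x$ while $g(z, \cdot)$ is right-continuous, the sandwich above forces $g(z,x) = g(z, x^{-}) = 1 - h(z,x)$ throughout $F$; in particular $g \in C^{2,\gamma}(F)$, $Lg = 0$, and the conditional law of $X_0$ given $Z_0 = z$ is atomless.

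Third, I would verify the locally uniform boundary conditions. Pointwise, $g(z,x) \downarrow 0$ as $x \downarrow 0$ because $X_0 > 0$ a.s.\ (ergodicity of $Z$ and $\int_E f p > 0$ in (A5) together imply $\int_0^t D_s f(Z_s)\ud s > 0$ for every $t > 0$), and $g(z,x) \uparrow 1$ as $x \to \infty$ by Lemma \ref{lem: X_finite_A}. Upgrading these to uniform limits over $\overline{E}_k$ is a direct application of Dini's theorem: the continuous maps $z \mapsto g(z, 1/n)$ are monotone decreasing in $n$ with pointwise limit $0$ on the compact set $\overline{E}_k$, and similarly $z \mapsto g(z,n) \uparrow 1$.

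Fourth, for uniqueness, suppose $u$ is another solution in $[0,1]$ of $Lu = 0$ satisfying \eqref{eq: g_bc}. Then $v \dfn u - g$ is a bounded solution of $Lv = 0$ with $v \to 0$ locally uniformly at the boundary of $F$. Applying It\^{o}'s formula to $v(Z, Y^x)$ up to the exit time $\tau_{k,n}$ of $(Z, Y^x)$ from $E_k \times (1/n, n)$ produces the martingale identity $v(z,x) = \expec\bra{v(Z_{\tau_{k,n}}, Y^x_{\tau_{k,n}}) \such Z_0 = z}$; sending first $n \to \infty$ (using the uniform boundary limits at $x = 0$ and $x = \infty$ via \eqref{eq: g_bc}) and then $k \to \infty$ (using non-explosion of $Z$ from (A3)) forces $v \equiv 0$. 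The main technical obstacle I anticipate is Step 2: carefully invoking the hitting-probability theory on the unbounded strip $F$ and then cleanly transferring the regularity of $h$ to $g$ via the sandwich; a secondary difficulty in Step 4 is justifying the limit interchange in the optional-stopping identity using boundedness of $v$ and the prescribed boundary behaviour.
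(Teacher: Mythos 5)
Your Steps 1--3 closely track the paper's own argument: the pathwise dichotomy and resulting sandwich $1-g(z,x)\le h(z,x)\le 1-g(z,x^-)$, the ellipticity of $L$ on $F$ from $\det A = x^2|\eta|^2\det c>0$, the identification $g=1-h$ once $h$ is continuous, and Dini's theorem for \eqref{eq: g_bc} all match the text up to cosmetic differences (the paper works with the escape probability $h(z,x)=\prob^{z,x}\bra{\lim_{t\to\infty}Y_t=\infty}$, the complement of your hitting probability, and its subsequence/contradiction argument for the boundary conditions is Dini in disguise). In Step 2, however, ``standard Feynman--Kac/Schauder theory yields $h\in C^{2,\gamma}(F)$'' compresses the hard part of the paper's proof: one must first establish that $(\prob^{z,x})_{(z,x)\in F}$ is transient (from $\prob\bra{X_0<\infty\such Z_0=z}=1$, giving $h>0$ somewhere), deduce the dichotomy $Y_t\to0$ or $Y_t\to\infty$, approximate $h$ pointwise by Dirichlet solutions $u^n$ on an exhaustion $(F_n)$, show $h>0$ throughout $F$ so that Harnack applies, and only then extract a $C^{2,\gamma}$ limit via interior Schauder estimates. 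Your citation points at the right machinery, but this is not an off-the-shelf theorem.

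Step 4 has a genuine gap. With $v \dfn u-g$ and $\tau_{k,n}$ the exit time from $E_k\times(1/n,n)$, sending $n\to\infty$ in $v(z,x)=\expec^{\prob^{z,x}}\bra{v(Z_{\tau_{k,n}},Y_{\tau_{k,n}})}$ leaves
\[
v(z,x)=\expec^{\prob^{z,x}}\bra{v(Z_{\sigma_k},Y_{\sigma_k})\,\indic\cbra{\tau>\sigma_k}},
\]
because the $x=1/n$ and $x=n$ exits contribute nothing in the limit (when they occur, $Z\in\bar E_k$ and \eqref{eq: g_bc} kills $v$), while the $\partial E_k$ exit survives. Now letting $k\to\infty$: on $\cbra{\tau<\infty}$ the indicator vanishes, but on $\cbra{\tau=\infty}$---an event of probability $g(z,x)>0$---one has $Y_{\sigma_k}\to\infty$ while $Z_{\sigma_k}\in\partial E_k$ leaves every fixed compact, so the \emph{locally} uniform boundary condition \eqref{eq: g_bc} gives no control on $v(Z_{\sigma_k},Y_{\sigma_k})$, and boundedness of $v$ alone cannot force that term to zero. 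The limit does not close. The paper circumvents this by proving $\tilde g\le g$ and $\tilde g\ge g$ separately rather than showing $v\to0$: for the upper bound it replaces $v$ on the uncontrolled event by the crude bound $\tilde g\le1$ and observes that the total mass of that event converges to exactly $\prob^{z,x}\bra{\tau=\infty}=g(z,x)$; for the lower bound it retains only the favourable event $\cbra{\rho_m<\sigma_k\wedge\rho_{1/n},\,Y_t\to\infty}$, whose probability tends to $h=g$. You would need to replicate that structure, or instead argue via Doob: $v(Z_{t\wedge\tau},Y_{t\wedge\tau})$ is a bounded martingale hence converges a.s.\ to some $V_\infty$; on $\cbra{\tau=\infty}$ the recurrent $Z$ revisits $E_1$ infinitely often while $Y\to\infty$, so \eqref{eq: g_bc} with $k=1$ forces $V_\infty=0$ along that subsequence, hence $V_\infty=0$ and $v(z,x)=\expec^{\prob^{z,x}}\bra{V_\infty}=0$. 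Either way, ``$n\to\infty$ then $k\to\infty$'' as you wrote it is insufficient.
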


\begin{rem}\label{rem: non-degeneracy}
The non-degeneracy assumption on $\eta$ is essential for the existence of a density; if $\eta \equiv 0$ it may be that the distribution of $X_0$ has an atom. Indeed, take $f \equiv 1$, $a \equiv 1$, $\eta \equiv 0$, $\theta \equiv 0$. Then, $X_0 = \int_0^\infty e^{-t}\ud t = 1$ with probability one.
\end{rem}

\begin{rem}
Theorem \ref{thm: pde_soln} implies the law of $X_0$ charges all of $(0,\infty)$, even for those functions $f$ which are bounded from above. Theorem \ref{thm: pde_soln} also implies that $X_0$ has a density without imposing Hormander's condition \cite[Chapter 2]{MR2200233} on the coefficients in \eqref{eq: bA_def}.  Rather, the infinite horizon combined with the presence of the independent Brownian motion $B$ ``smooth out'' the distribution of $X_0$.

\end{rem}

Theorem \ref{thm: pde_soln} is certainly important from a theoretical viewpoint. However, it appears to be of limited practical use. Even under the force of the extra non-degeneracy condition $|\eta| > 0$, it is unclear how to numerically solve the PDE $Lg = 0$ with the given boundary conditions \eqref{eq: g_bc}, as there are no natural auxiliary boundary conditions in the spatial domain of $z \in E$. In Subsection \ref{subsec: time_reverse} that follows we provide an alternative, more useful method for estimating numerically the law of $(Z_0, X_0)$.

\subsection{The distribution of $(Z_0, X_0)$ via diffusion time-reversal}\label{subsec: time_reverse}

The goal here is to show that the distribution of $(Z_0,X_0)$ coincides with the
invariant distribution of a positive recurrent process $(\ze,\chi)$. In order to see the connection, extend $X_0$ to a whole process $(X_t)_{\tir}$ defined via
\begin{equation}\label{eq: X_P_def}
X \dfn \frac{1}{D} \int_\cdot^\infty D_t f(Z_t) \ud t,
\end{equation}
and note that $(Z_t, X_t)_{\tir}$ is a stationary process under $\prob$. Fix $T>0$, and define the process $(\ze^T_t, \chi^T_t)_{t \in [0, T]}$ via
time-reversal:
\begin{equation}\label{eq: ze_chi_def}
\ze^T_t \dfn Z_{T - t};\qquad \chi^T_t \dfn X_{T -t};\qquad t \in
[0, T].
\end{equation}
It still follows that $(\ze^T, \chi^T)$ is stationary under $\prob$, with the same one-dimensional marginal distribution as $(Z_0, X_0)$. Furthermore, stationarity of $(Z, X)$ clearly implies that the law of the process $(\ze^T, \chi^T)$ does not depend on $T$ (except for its time-domain of definition). Therefore, one may create a new process $(\zeta_t, \chi_t)_{\tir}$ such that the law of $(\zeta^T, \chi^T)$ is the same as the law of $(\zeta_t, \chi_t)_{t \in [0, T]}$ for all $t \in T$. If one can establish that $(\ze,\chi)$ is ergodic, then the distribution of $(Z_0,X_0)$ may be efficiently estimated via the ergodic theorem.

Towards this end, one needs to understand the behavior of $(\ze,\chi)$. Standard results (e.g. \cite{MR866342}) in the theory of time-reversal imply that $\ze$ is a diffusion in its own filtration, and identify the corresponding coefficients. In order to deal with $\chi$, we return to the definition of $\chi^T$ and define yet one more process $(\Delta^T_t)_{t \in [0, T]}$ via
\begin{equation}\label{eq: Delta_def}
\Delta^T_t = \frac{D_T}{D_{T-t}}, \quad t \in [0, T].
\end{equation}
Using all previous definitions, we obtain that
\begin{align}
\nonumber \chi^T_t = X_{T - t} &= \frac{1}{D_{T - t}} \int_{T - t}^\infty D_u f(Z_u) \ud u \\
\nonumber &= \frac{D_T}{D_{T - t}} \pare{X_T + \int_{T - t}^T \frac{D_u}{D_T} f(Z_u) \ud
  u} \\
&= \Delta_t^T \pare{\chi^T_0 + \int_0^t \frac{1}{\Delta^T_u} f(\ze^T_u) \ud u}, \quad t \in [0, T]. \label{eq: chi_from_delta}
\end{align}
As it turns out, one can describe the joint dynamics of $(\zeta^T, \Delta^T)$ in appropriate filtrations (and these dynamics do not depend on $T$, as expected). To ease the presentation, recall from Section \ref{sec: setup} that for any $\spos^d$ valued smooth function $A$ on $E$ the (matrix) divergence is defined by $\dvg{A}^i = \partial_j A^{ij}$ for $i=1,...,d$. It is then shown in Section \ref{sec: tr} that $(\zeta^T, \Delta^T)$ is such that
\begin{align*}
\ze^T & = \ze^T_0 + \int_0^\cdot\left(c\frac{\nabla p}{p} + \dvg{c} - m\right)(\ze^T_t)\ud t + \int_0^\cdot \sigma(\ze^T_t)\ud W^T_t, \\
\Delta^T &= 1 + \int_0^{\cdot}\Delta^T_t\pare{\theta' c \frac{\nabla p}{p} + \nabla\cdot(c\theta) - a}(\ze^T_t) \ud t + \int_0^\cdot
\Delta^T_t \pare{\eta(\ze^T_t)'\ud B^T_t + \theta'\sigma(\ze^T_t)\ud W^T_t} \\
&= 1 + \int_0^{\cdot}\Delta^T_t\pare{\theta' (m - \dvg{c}) + \nabla\cdot(c\theta) - a}(\ze^T_t) \ud t + \int_0^\cdot
\Delta^T_t \pare{\eta(\ze^T_t)'\ud B^T_t + \theta (\ze^T_t)' \ud \zeta^T_t}
\end{align*}
for independent Brownian motions $(W^T, B^T)$ in an appropriate filtration.

From the joint dynamics of $(\zeta^T, \Delta^T)$ one obtains the joint dynamics of $(\zeta^T, \chi^T)$, which again do not depend on $T$. In particular, since $\Delta^T$ is a
semimartingale, \eqref{eq: chi_from_delta} yields that
\begin{equation} \label{eq: chitT_dyn}
\begin{split}
\zeta^T &= \zeta^T_0 + \int_0^\cdot \left(c\frac{\nabla p}{p} + \dvg{c} - m\right)(\ze^T_t)\ud t + \int_0^\cdot \sigma(\ze^T_t)\ud W^T_t\\
\chi^T &= \chi^T_0 + \int_0^\cdot \left(f(\ze^T_t) - \chi^T_t\left(a-\theta'c\frac{\nabla p}{p}-\nabla\cdot(c\theta)\right)(\ze^T_t)\right)\ud t\\
&\qquad\qquad + \int_0^\cdot \chi^T_t\left(\eta(\ze^T_t)'\ud B^T_t + \theta'c(\ze^T_t)'\ud W^T_t\right).
\end{split}
\end{equation}

For a generic version $(\ze,\chi)$ with the same generator (which does not depend upon time) as $(\ze^T,\chi^T)$ above, ergodicity of $Z$ implies ergodicity of $\ze$ (see Proposition \ref{prop: zeta_t_dynamics} later on in the text).  Furthermore, $\chi$ is ``mean reverting'' as can easily be seen when $\theta \equiv 0$, and $a> 0$, and continues to be true in the general case.  Thus, one expects the empirical laws of $(\ze,\chi)$ to satisfy a certain strong law of large numbers, an intuition that is made precise in the following result.

\begin{thm} \label{thm:main}
Let Assumption \ref{ass: standing} hold. Then, there exists a probability space $(\Omega, \bF, \qprob)$ supporting independent $d$ and $k$ dimensional Brownian motions $W$ and $B$, as well as process $\zeta$ satisfying
\[
\ze = \ze_0 + \int_0^\cdot \left(c\frac{\nabla p}{p} + \dvg{c} - m\right)(\ze_t)\ud t + \int_0^\cdot \sigma(\ze_t)\ud W_t,
\]
where $\zeta_0$ is an $\F_0$-measurable random variable with density $p$.

Define the process $\Delta$ as the solution to the linear differential equation
\begin{equation}\label{eq: Delta_no_T_def}
\Delta = 1 + \int_0^\cdot \Delta_t\pare{\theta' (m - \dvg{c}) + \nabla\cdot(c\theta) - a}(\ze_t) \ud t + \int_0^\cdot
\Delta_t \pare{\eta(\ze_t)'\ud B_t + \theta (\ze_t)' \ud \zeta_t},
\end{equation}
and then, for any $x \in (0, \infty)$, define $\chi^x$ as the solution to the linear differential equation
\begin{equation}\label{eq: chi_no_T_def}
\chi^x = x + \int_0^\cdot \chi^x_t \, \frac{\ud \Delta_t} {\Delta_t} + \int_0^\cdot f(\ze_t) \ud t.
\end{equation}
Lastly, let $x \in (0, \infty)$,  $T \in (0, \infty)$ and set $F=E\times(0,\infty)$ as in \eqref{eq: cond_cdf}. Define the (random) empirical measure $\pih^x_T$ on $\B (F)$, the Borel subsets of $F$ by
\begin{equation}\label{eq: LT_def}
\pih^x_T \bra{A} \dfn \frac{1}{T} \int_0^T \indic_A (\zeta_t, \chi^x_t) \ud t, \quad A \in \B (F).
\end{equation}
With the above notation, there exists a set $\Omega_0 \in \F_\infty$ with $\qprob \bra{\Omega_0}= 1$ such that
\begin{equation}\label{eq: main_pi_conv}
\lim_{T \to \infty} \pih^x_T (\omega) = \pi \text{ weakly,  for all } x \in (0, \infty) \text{ and } \omega \in \Omega_0,
\end{equation}
where $\pi$ is the joint distribution of $(Z_0,X_0)$ under $\prob$ given in \eqref{eq: pi_def}.
\end{thm}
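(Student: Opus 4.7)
The plan is to realize $(\zeta, \chi^x)$ as a time-homogeneous Markov diffusion on $F = E \times (0, \infty)$ whose invariant distribution is $\pi$, first prove $\pih_T \to \pi$ a.s.\ when the process is launched from its stationary distribution, and then extend to every deterministic starting point $x > 0$ via the coupling furnished by the linearity of \eqref{eq: chi_no_T_def} in the initial condition. The construction itself is direct: the diffusion time-reversal formula of \cite{MR866342} applied to $(Z, X)$ (launched from the stationary joint law $\pi$) on each finite horizon $[0, T]$ produces the dynamics \eqref{eq: chitT_dyn}, whose coefficients are $T$-independent. Standard existence results for SDEs with locally $C^{2,\gamma}$ coefficients then deliver $(\zeta, \Delta, \chi^x)$ on a filtered space $\basisq$ with $\zeta_0 \sim p$, independent Brownian motions $W, B$, and $\Delta, \chi^x$ given by the linear SDEs \eqref{eq: Delta_no_T_def} and \eqref{eq: chi_no_T_def}.

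\emph{Invariance and the stationary ergodic theorem.} Sample an auxiliary $\chi_0$ independently of $(W, B)$ from the conditional law $g(\zeta_0, \cdot)$, so that $(\zeta_0, \chi_0) \sim \pi$; then on each $[0, T]$ the law of $(\zeta, \chi^{\chi_0})$ matches that of the time-reversal of $(Z, X)$ launched from $\pi$. Stationarity of $(Z_t, X_t)_{\tir}$ under $\prob$ transfers to stationarity of $(\zeta, \chi^{\chi_0})$ under $\qprob$, establishing that $\pi$ is invariant for the Markov semigroup of $(\zeta, \chi^\cdot)$. To convert stationarity into convergence of empirical measures I would argue ergodicity of this stationary process: since $Z$ is ergodic under $\prob$ and $X_t$ is a deterministic functional of $(Z_u)_{u \geq t}$ via \eqref{eq: X_P_def}, the two-sided stationary extension of $(Z, X)$ has trivial shift-invariant $\sigma$-algebra, and this triviality is preserved under time-reversal. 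The Birkhoff ergodic theorem then yields $\pih^{\chi_0}_T \to \pi$ weakly, $\qprob$-a.s.

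\emph{Extension to arbitrary $x > 0$.} By linearity of \eqref{eq: chi_no_T_def}, $\chi^x - \chi^{x'} = (x - x')\Delta$ for any $x, x' > 0$. An Ito computation using \eqref{eq: Delta_no_T_def} and the SDE for $\zeta$ yields
\[
\log \Delta_t = \int_0^t \left[\theta' c\, \frac{\nabla p}{p} + \nabla \cdot (c \theta) - a - \frac{1}{2}\bigl(\theta' c \theta + |\eta|^2\bigr)\right]\!(\zeta_s)\, \ud s + M_t,
\]
with $M$ a local martingale whose quadratic variation is controlled by $\zeta$. Integration by parts against $p$ cancels the first two integrand terms, and ergodicity of $\zeta$ combined with Assumption (A5) gives, $\qprob$-a.s., that $t^{-1} \log \Delta_t$ converges to the strictly negative constant $-\int_E [a + \tfrac{1}{2}(\theta' c \theta + |\eta|^2)]\, p\, \ud z$; in particular $\Delta_t \to 0$. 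Hence $|\chi^x_t - \chi^{x'}_t| \to 0$ pathwise. For any $\phi \in C_b(F)$, combining this with the tightness of $\pih^{\chi_0}_T$ (which follows from its a.s.\ weak convergence to $\pi$) and the uniform continuity of $\phi$ on compact sets gives $|\pih^x_T(\phi) - \pih^{\chi_0}_T(\phi)| \to 0$, whence $\pih^x_T \to \pi$ weakly $\qprob$-a.s.\ for each fixed $x$. The exceptional null set is rendered independent of $x$ by first establishing convergence for $x$ in a countable dense subset of $(0, \infty)$ and then using the coupling bound $|\chi^x - \chi^{x'}| = |x - x'|\Delta$ to extend to all $x > 0$.

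\emph{Main obstacle.} The subtlest point is the ergodicity step for the stationary $(\zeta, \chi^{\chi_0})$. When $\eta \equiv 0$ the joint generator is degenerate on $F$, so standard elliptic/Harris techniques do not apply directly; instead one must transfer triviality of the invariant $\sigma$-algebra across the time-reversal, using that $X_t$ is future-measurable with respect to $Z$ (equivalently, $\chi_t$ is past-measurable with respect to $\zeta$) together with ergodicity of $Z$. The uniform-in-$x$ extension, while conceptually clean, also requires some care in tracking exceptional sets.
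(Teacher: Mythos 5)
Your strategy is to bypass the paper's perturbation machinery (the $\eps$-regularization of $D$ and the mollification of $f$ in Lemmas \ref{lem: pi_eps_to_pi}--\ref{lem: deg_conv_in_prob_gen}) by arguing directly that the stationary process $(\zeta,\chi^{\chi_0})$ is ergodic, via triviality of the invariant $\sigma$-algebra of the forward $(Z,X)$ and the fact that time-reversal preserves this triviality. This would indeed be a shorter and more elementary route if it closed, but the key claim as written is incorrect, and it is precisely the point the paper's machinery is built to avoid.

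The gap: you assert that ``$X_t$ is a deterministic functional of $(Z_u)_{u\geq t}$ via \eqref{eq: X_P_def}'' (equivalently, that $\chi_t$ is past-measurable with respect to $\zeta$). This is false unless $\eta\equiv 0$. From \eqref{eq: X_P_def}, $X_t = \int_t^\infty (D_u/D_t) f(Z_u)\ud u$, and the ratio $D_u/D_t = \exp(-(R_u-R_t))$ contains, by \eqref{eq: R_defn}, the term $\int_t^u\eta(Z_s)'\ud B_s$. The Brownian motion $B$ is independent of $Z$ (and not recoverable from it), so $X_t$ genuinely depends on the future increments of $B$, not just on $(Z_u)_{u\geq t}$. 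Consequently, ergodicity of the stationary $Z$ alone does not give triviality of the shift-invariant $\sigma$-algebra of $(Z,X)$; one would additionally have to argue joint ergodicity of the two-sided stationary extension of $(Z,B\text{-increments})$, e.g.\ by invoking mixing of the Brownian increment flow and the ``ergodic $\times$ mixing $=$ ergodic'' principle, and then pass to $(Z,X)$ as a measurable factor. None of this is in the proposal, and none of it is what the paper does: the paper instead accepts that stationarity via Birkhoff only produces a limit equal to a conditional expectation on the (a priori unknown) invariant $\sigma$-algebra (Lemma \ref{lem: ergodic_deg}), and uses the $\eps$-perturbation (Lemma \ref{lem: deg_conv_in_prob}, which reduces to the non-degenerate elliptic case of Proposition \ref{prop: backwards_ergodicity}) together with mollification (Lemma \ref{lem: deg_conv_in_prob_gen}) to identify that limit in probability. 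So your approach, while conceptually appealing, substitutes for the paper's central technical lemmas a single unproved (and, as stated, wrong) ergodicity claim.

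The remainder of your argument---the coupling $\chi^x-\chi^{x'}=(x-x')\Delta$, the a.s.\ decay of $\Delta$, the passage to a countable dense set of starting points $x$ and a countable convergence-determining class of test functions to get a single exceptional null set---matches the structure of Lemma \ref{lem: ergodic_deg_all_x} and the final proof, and is essentially sound. One small caveat: your derivation of $t^{-1}\log\Delta_t\to-\int_E(a+\tfrac12(\theta'c\theta+|\eta|^2))p\,\ud z$ by ergodicity of $\zeta$ tacitly assumes $\theta'c\theta+|\eta|^2\in\Lb^1(E,p)$; under Assumption (A5) only the weaker $\eps$-perturbed integrability holds, and the argument needs the Dambis--Dubins--Schwarz time-change used in the proof of Lemma \ref{lem: X_finite_A} rather than a bare martingale SLLN. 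But the dominant issue remains the ergodicity step above.
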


\begin{rem} \label{rem: chi_via_zeta}
In the context of Theorem \ref{thm:main}, note that the processes $\Delta$ and $\chi^x$ can be given in closed form in terms of $\zeta$; indeed,
\begin{align*}
\Delta &= \exp \pare{ \int_0^\cdot \pare{\theta' (m - \dvg{c}) + \nabla\cdot(c\theta) - a}(\ze_t) \ud t}\mathcal{E}\left(\int_0^\cdot \pare{\eta(\ze_t)'\ud B_t + \theta (\ze_t)' \ud \zeta_t}\right)_\cdot, \\
\chi^x &= \Delta \pare{x + \int_0^\cdot \frac{1}{\Delta_t} f(\ze_t) \ud t}, \quad x \in (0, \infty).
\end{align*}
\end{rem}

Theorem \ref{thm:main} provides a way to efficiently estimate the joint distribution of $(Z_0,X_0)$ efficiently through Monte-Carlo simulation. Indeed, one need only obtain a single path of the reversed process $(\ze,\chi^x)$ to recover the distribution $\pi$. However, the applicability of the result above depends heavily on whether or not the distribution $p$ for $Z_0$ is known, as it (together with its gradient) appears in the dynamics of $\zeta$. In the case where $Z$ is one-dimensional, or more generally, reversing, $p$ can be expressed in closed form from the model coefficients $m$ and $c$ in the dynamics for $Z$. Furthermore, there are certain cases of non-reversing, multi-dimensional diffusions, where $p$ can be (semi-)explicitly computed, as the next example shows.

\begin{exa} \label{exa: OU}
Assume that $Z$ is a multi-dimensional Ornstein-Uhlenbeck process with dynamics
\begin{equation*}
dZ_t = -\gamma(Z_t-\Theta)dt + \sigma dW_t, \quad \tir,
\end{equation*}
where $\gamma \in \Real^{d\times d}$, $\Theta\in \Real^d$, and $\sigma\in\Real^{d\times d}$. Here, $E = \Real^d$ and (A1) clearly holds. Furthermore (A2) is satisfied when $c = \sigma\sigma'$ is (strictly) positive definite; in fact, we take $\sigma$ as the unique positive definite square root of $c$. The process $Z$ need not be reversing, as can clearly be seen when $\sigma$ is the identity matrix, $\Theta = 0$ and $\gamma$ is not symmetric.  However, as will be argued below, the ergodic assumption (A3) holds when all eigenvalues of $\gamma$ have strictly positive real part, and one may identify the invariant density ``almost'' explicitly. To see this, a direct calculation shows that if a symmetric matrix $J$ satisfies the Riccati equation
\begin{equation}\label{eq: riccati}
JJ = \sigma \gamma'\sigma^{-1} J + J\sigma^{-1}\gamma\sigma,
\end{equation}
then the function
\begin{equation*}
p(z) = \exp\pare{-\frac{1}{2}(z-\Theta)'\sigma^{-1} J \sigma^{-1}(z-\Theta)}, \quad z \in \Real^d,
\end{equation*}
satisfies $\tilde{L}^Z p = 0$ where $\tilde{L}^Z$ is as in \eqref{eq: tildeLZ}.  If $J$ is additionally positive definite then, up to a normalizing constant, $p$ is the density for a normal random variable with mean $\Theta$ and covariance matrix $\Sigma = \sigma J^{-1}\sigma$. Thus, $p$ is integrable on $\Real^d$ and (A3) follows from \cite[Corollary 4.9.4]{MR1326606} which proves recurrence for $Z$.

It thus remains to construct a symmetric, positive definite solution to \eqref{eq: riccati}.  From \cite[Lemma 2.4.1, Theorem 2.4.25]{MR1997753} such a solution (called the ``stabilizing solution'' therein) exits if a) the pair $(\sigma^{-1}\gamma\sigma, 1_d)$ is \emph{stabilizable}, in that there exists a matrix $F$ such that $\sigma^{-1}\gamma\sigma - F$ has eigenvalues with strictly negative real part and b) the eigenvalues of $\sigma^{-1}\gamma\sigma$ have strictly positive real part.  In the present case, each of these statements readily follows: for the first statement, one can take $F = \sigma^{-1}\gamma\sigma + 1_d$; for the second statement, note that the eigenvalues of $\sigma^{-1}\gamma\sigma$ coincide with those of $\gamma$, which by assumption have strictly positive real part. Therefore, even in this non-reversing case one may still identify $p$.

\end{exa}

The previous interesting Example \ref{exa: OU} notwithstanding, for non-reversing, multi-dimensional diffusions, even after verifying the ergodicity of $Z$ (and hence the existence of $p$) one does not typically know $p$ explicitly. In such cases, the following simulation method is proposed: fix a large enough $T$ and first simulate $(Z_t)_{t \in [0, 2T]}$ via \eqref{eq: Z_defn}, starting from \emph{any} point $Z_0$ (since the invariant density is unknown). If the choice of $T$ is large enough, the process $(Z_t)_{t \in [T,2T]}$ will behave as the stationary version in \eqref{eq: Z_defn}, since $Z_T$ will have approximately density $p$. In that case, defining $(\ze_t)_{t\in [0, T]}$ via $\ze_t = Z_{2 T - t}$ for $t \in [0, T]$, $\ze$ should behave as it should in the dynamics \eqref{eq: ze_chi_dyn}, even with $\zeta_0$ having (approximate) density $p$. Now, given $\ze$, $\chi^x$ may be defined via the formulas of Remark \ref{rem: chi_via_zeta}; therefore, for large enough $T$, the empirical measure $\pih^x_T$ should approximate in the weak sense the joint law $\pi$.

Note finally that when $p$ is known and $|\eta| > 0$, and under certain mixing conditions (see \cite{MR1161349,Veretennikov_Bounds}), one can also obtain uniform estimates for the speed at which the above convergence takes place.

\begin{rem}\label{rem: Kliemann}

In the case when $\theta = \eta \equiv 0$ and $f\in C^{1,\gamma}(E;\Real_+)$, it is possible to explicitly identify the support of $\pi$.  Such an identification follows from more general ergodic results on ``stochastic differential systems'' obtained in \cite{MR748853, MR1723992}. To identify the support, note that when $\theta=\eta\equiv 0$, it follows that $\Delta_t = \exp\pare{-\int_0^T a(\ze_u)du}$.  A direct calculation using Remark \ref{rem: chi_via_zeta} shows that $\chi^x$ has dynamics
\begin{equation}\label{eq: ze_chi_dyn_deg}
\begin{split}
\ud \chi^x_t& = \pare{f(\ze_t)-\chi^x_t a(\ze_t)} \ud t.\\
\end{split}
\end{equation}
Hence, the paths of $\chi^x$ are of bounded variation. Now, define
\begin{equation}\label{eq: l_u_def}
\hat{u} \dfn \inf\cbra{x\such \sup_{z\in E}\left(f(z)-x a(z)\right)\leq 0};\qquad \hat{l} \dfn\sup\cbra{x\such \inf_{z\in E}\left(f(z)-xa(z)\right)\geq 0}.
\end{equation}
Assumption \ref{ass: standing} implies $a(z_0)>0$ for some $z_0\in E$ and thus $0\leq \hat{l}\leq \hat{u}\leq \infty$ with $\hat{l}=\hat{u}$ if and only if for some constant
$c$, $f(z) = c a(z)$ for all $z\in E$. In this case, $X=c$ $\prob^z$ almost
surely for all $z\in E$.  With this notation, \cite{MR748853} proves:

\begin{prop}\label{prop: kliemann}(\cite[Section III]{MR748853}) Let Assumptions \ref{ass: standing} hold. Assume that $f\in C^{1,\gamma}(E;\Real_+)$ and $\eta, \theta\equiv 0$. Then the support of $\pi$ is $\bar{E}\times [\hat{l},\hat{u}]$ ($[\hat{l},\infty)$ if $\hat{u}=\infty$).
\end{prop}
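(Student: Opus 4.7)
The plan is to exploit Theorem \ref{thm:main} to reduce the identification of $\mathrm{supp}(\pi)$ to a long-time analysis of a single sample path of $(\zeta,\chi^x)$. Since $\pih^x_T \to \pi$ weakly and almost surely for every $x>0$, the closed support of $\pi$ is (almost surely) the set of weak-accumulation points of the empirical occupation measures, i.e., the closure of the $\omega$-limit set of $t \mapsto (\zeta_t, \chi^x_t)$. Under the standing hypothesis $\theta = \eta \equiv 0$, the reversed process has the degenerate dynamics \eqref{eq: ze_chi_dyn_deg}, which is precisely a ``stochastic differential system'' in the sense of \cite{MR748853}.

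For the inclusion $\mathrm{supp}(\pi) \subseteq \bar{E} \times [\hat{l}, \hat{u}]$, I would use a Gr\"onwall-type comparison. Whenever $\chi^x_t \geq \hat{u}$, the definition of $\hat{u}$ gives
\[
\dot{\chi}^x_t = f(\zeta_t) - \chi^x_t a(\zeta_t) \leq \sup_{z \in E}\bigl(f(z) - \hat{u}\, a(z)\bigr) - (\chi^x_t - \hat{u})\, a(\zeta_t) \leq -(\chi^x_t - \hat{u})\, a(\zeta_t),
\]
so that $(\chi^x_t - \hat{u})_+ \leq x \exp\bigl(-\int_0^t a(\zeta_s)\, \ud s\bigr)$. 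Specializing assumption (A5) to $\theta = \eta = 0$ yields $\int_E a(z) p(z)\, \ud z > 0$, and the ergodic theorem for $\zeta$ then forces $\int_0^t a(\zeta_s)\, \ud s \to \infty$ linearly, almost surely; hence $\limsup_t \chi^x_t \leq \hat{u}$. A symmetric argument at the lower endpoint yields $\liminf_t \chi^x_t \geq \hat{l}$. Therefore $\pih^x_T\bigl(\bar{E} \times [\hat{l}-\varepsilon, \hat{u}+\varepsilon]\bigr) \to 1$ for every $\varepsilon > 0$, and the claimed containment follows.

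For the reverse inclusion, I would invoke the main result of \cite{MR748853}, which identifies $\mathrm{supp}(\pi)$ with the closure of the reachable set of the associated deterministic control system
\[
\dot{\zeta} = \Bigl(c\tfrac{\nabla p}{p} + \dvg{c} - m\Bigr)(\zeta) + \sigma(\zeta)\, u(t), \qquad \dot{\chi} = f(\zeta) - \chi\, a(\zeta), \quad u \in \Lb^\infty_{\mathrm{loc}}.
\]
Because $c = \sigma\sigma'$ is locally elliptic on $E$, the $\zeta$-component is fully controllable on $E$ by a Stroock--Varadhan style argument. The remaining freedom is used to drive $\chi$ to any value in $[\hat{l}, \hat{u}]$: for $y < \hat{u}$ the definition of $\hat{u}$ supplies some $z^+ \in E$ with $f(z^+) - y\, a(z^+) > 0$, so parking $\zeta$ in a small neighborhood of $z^+$ pushes $\chi$ upward past $y$; symmetrically when $y > \hat{l}$. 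Concatenating such segments, followed by a final steer of $\zeta$ toward any prescribed $z \in \bar{E}$, produces orbits approaching every point of $\bar{E} \times [\hat{l}, \hat{u}]$; flow-invariance of the interval $[\hat{l}, \hat{u}]$ for the $\chi$ ODE (immediate from $\dot{\chi} \geq 0$ at $\hat{l}$ and $\dot{\chi} \leq 0$ at $\hat{u}$) guarantees the trajectory never overshoots. The hardest step is this controllability/reachability analysis: verifying the precise hypotheses of Kliemann's orbit-support theorem in the present (possibly non-compact) setting, and handling corner cases such as zeros of $a$, approach to the endpoints $\hat{l}$ and $\hat{u}$ themselves, and the unbounded case $\hat{u} = \infty$.
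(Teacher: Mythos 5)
The paper does not prove this Proposition; it attributes it entirely to \cite[Section~III]{MR748853}, so anything you supply is strictly more than the text contains. Your argument for the containment $\mathrm{supp}(\pi)\subseteq\bar E\times[\hat l,\hat u]$ is correct and genuinely more elementary than routing it through Kliemann: the strips $\{\chi\le\hat u\}$ and $\{\chi\ge\hat l\}$ are forward-invariant for \eqref{eq: ze_chi_dyn_deg} precisely because $\sup_{z}(f(z)-\hat u\,a(z))\le 0$ and $\inf_z(f(z)-\hat l\,a(z))\ge 0$ (which follow from \eqref{eq: l_u_def} and semicontinuity of the sup/inf, worth a line), and (A5) specialised to $\theta=\eta\equiv0$ together with ergodicity of $\zeta$ gives $\int_0^t a(\zeta_s)\,\ud s\to\infty$, so your Gr\"onwall bound shows $\chi^x_t\to[\hat l,\hat u]$ almost surely and the Portmanteau theorem then forces $\pi$ to be concentrated on $\bar E\times[\hat l,\hat u]$. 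For the reverse inclusion you, like the paper, fall back on Kliemann's orbit/controllability theorem; your sketch (ellipticity gives full controllability of $\zeta$, then parking $\zeta$ near a point $z^+$ with $f(z^+)-y\,a(z^+)>0$ pushes $\chi$ across any level $y<\hat u$, with forward-invariance of $[\hat l,\hat u]$ preventing overshoot) captures the mechanism of that theorem accurately, and you are right to flag that the verification of its hypotheses in a non-compact state space is where the real work resides and is what the citation is doing.

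Two small corrections. First, your opening claim that $\mathrm{supp}(\pi)$ \emph{equals} the closure of the $\omega$-limit set of one trajectory does not follow from $\pih^x_T\to\pi$ weakly: that convergence gives only the inclusion of $\mathrm{supp}(\pi)$ into that closure (a trajectory may visit a $\pi$-null set with vanishing time-frequency), and the converse inclusion is exactly the substance of Kliemann's theorem, so it should not be assumed at the outset. Second, the comparison actually yields $(\chi^x_t-\hat u)_+\le(x-\hat u)_+\exp(-\int_0^t a(\zeta_s)\,\ud s)$; writing $x$ in place of $(x-\hat u)_+$ is harmless (the limit is still zero) but not what Gr\"onwall gives.
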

\end{rem}

\section{A Numerical Example}\label{S: num_ex}

We now provide an example which highlights the superiority (in terms of computational efficiency) of the time-reversal method over the naive method for obtaining the distribution of $X_0$.  Consider the case $E=\mathbb{R}$, and 
\begin{equation}\label{eq: ou_case}
dZ_t = -\gamma Z_t\ud t + \ud W_t;\qquad X_0 = \int_0^\infty Z_t e^{-at}\ud t;\qquad \gamma,a > 0.
\end{equation}
Note that the function $\Real \ni z \mapsto f(z) = z$ fails to be non-negative. However, as argued below, the results of Theorem \ref{thm:main} still hold. As $Z$ is a mean-reverting Ornstein Uhlenbeck process, it is straight-forward to verify Assumption (A3) with $p(z) = \sqrt{\gamma/\pi}e^{-\gamma z^2}$, so that $Z_0\sim N(0,1/(2\gamma))$.  We claim that $(Z_0,X_0)$ is normally distributed with mean vector $(0,0)$ and covariance matrix
\begin{equation*}
\Sigma = \left(\begin{array}{c c} \frac{1}{2\gamma} &\frac{1}{2\gamma(a+\gamma)}\\ \frac{1}{2\gamma(a+\gamma)} & \frac{1}{2\gamma a(a+\gamma)}\end{array}\right).
\end{equation*}
Indeed, integration by parts shows that for $T>0$:
\begin{equation*}
\int_0^T e^{-at}Z_t \ud t = \frac{Z_0}{a+\gamma} + \frac{1}{a+\gamma}\int_0^T e^{-at} \ud W_t - \frac{1}{a+\gamma}e^{-aT}{Z_T}.
\end{equation*}
The ergodicity of $Z$ implies $\lim_{T \to \infty} \pare{Z_T/T} = -\gamma\int_{\mathbb{R}}zp(z) dz = 0$ almost surely; therefore, it follows that $\lim_{T \to \infty}  e^{-aT}Z_T = 0$ holds almost surely.  Next, note that $Y_T\dfn \int_0^T e^{-at} \ud W_t$ is independent of $Z_0$ and normally distributed with mean $0$ and variance $(1-e^{-2aT})/(2a)$. Lastly, as a process, $Y = \pare{Y_T}_{T\geq 0}$ is an $L^2$-bounded martingale and hence $Y_\infty \dfn \lim_{T \to \infty} Y_T$ almost surely exists, where $Y_\infty$ is independent of $Z_0$, and normally distributed with mean $0$ and variance $1/(2a)$. Thus $X_0 = \lim_{T\uparrow\infty} \int_0^T e^{-at}Z_t dt$ exists almost surely and
\begin{equation*}
X_0 = \frac{Z_0}{a+\gamma} + \frac{Y_\infty}{a+\gamma};\qquad Z_0\indep Y_\infty;\quad  Z_0\sim N\left(0,\frac{1}{2\gamma}\right), Y_\infty\sim N\left(0,\frac{1}{2a}\right),
\end{equation*}
from which the joint distribution follows. Now, even though $f(z)=z$ can take negative values, the time reversal dynamics in \eqref{eq: ze_chi_dyn_deg} still hold, taking the form
\begin{equation*}
\ud \zeta_t = -\gamma\zeta_t \ud t + \ud W_t;\qquad \ud \chi_t = \left(a - \zeta_t\chi_t\right) \ud t.
\end{equation*}
Lastly, even though Theorem \ref{thm:main} no longer directly applies, it is shown in \cite[Theorem 3.3, Section 3.D, Proposition 3.15]{MR748853} that $(\zeta,\chi)$ is still ergodic\footnote{The tightness condition in Proposition 3.15 is straightforward to verify.}, in that \eqref{eq: main_pi_conv} holds.

For these dynamics, we performed the following test: for a fixed terminal time $T$ and mesh size $\delta$, we estimated the distribution of $X_0$ in two ways. First, (``Method A'') by sampling $\zeta_0\sim p$ and setting $\chi_0 = 1$, and second (``Method B'') by running the forward process $Z$ until $2T$ then setting $\zeta_t = Z_{2T-t}$, $\chi_0=1$. For each simulation we computed the empirical distribution along a single path and then estimated the Kolmogorov-Smirnov distance ($d_{KS}(F,G) = \sup_{x}|F(x)-G(x)|$, for distribution functions $F,G$) between the empirical and true distribution for $X_0$. The parameter values were $\gamma = 2$, $a=1$, $T=10,000$ and $\delta = 1/24$. 

\begin{figure}
\epsfig{file=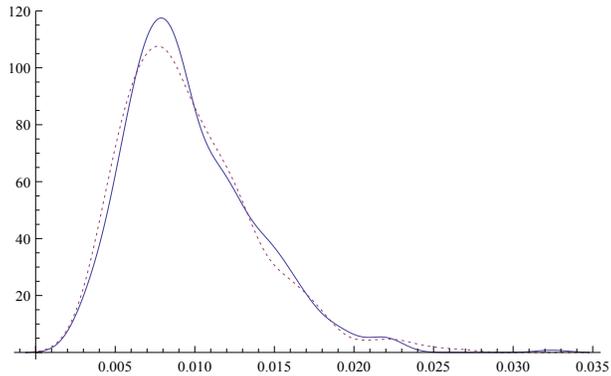,height=5cm,width=8cm}
\caption{Kolmogorov-Smirnov distances between the empirical and true distribution for $X_0$. The solid line is for the reversal method starting $\ze_0\sim p$ and the dashed line for the reversal method running $Z_0$ to $2T$ and setting $\ze_t = Z_{2T-t}$. Here, $T=10,000$, $\delta = 1/24$, $\gamma = 2$ and $a=1$. Computations were performed using \emph{Mathematica} and the code can be found on the author's website \url{www.math.cmu.edu/users/scottrob/research}.} \label{F:KSDist}
\end{figure}

\begin{table}
\centering
\begin{tabular}{l l l}
\hline
 & Method A & Method B \\
\hline
Median Distance & 0.00887 & 0.00882 \\
Standard Deviation & 0.00405 & 0.00413\\
$99^{th}$ Percentile & 0.02168 & 0.02255\\
$1^{st}$ Percentile & 0.00405 & 0.00290\\
Median Time (seconds) & 2.694 & 8.766\\
\hline
\end{tabular}
\caption{Statistics on Kolmogorov-Smirnov distances between the empirical and true distribution for $X_0$ using methods A and B.}  \label{T:summary_data}
\end{table} 

Figure \ref{F:KSDist} shows the resulting Kolmogorov-Smirnov distances for $500$ sample paths. The plot gives a (smoothed) histogram comparing the distances using the two methods described above. As can be seen, the two methods give comparable results: this is not surprising given that rapid convergence of the distribution of $\ze$ to its invariant distribution \cite{MR972776}. Table \ref{T:summary_data} provides summary statistics regarding the median distances and simulation times, as well as the standard deviation and tail data.

Having obtained Kolmogorov-Smirnov distances using reversal methods, we next compared our results to a naive simulation of $X_0$, obtained by sampling $Z_0\sim p$ and computing $X_0$ via \eqref{eq: ou_case} directly. Here, for the median distance $d$ using Method A from Table \ref{T:summary_data}, we sampled $X_0$ stopping at the first instance $N$ so that the Kolmogorov-Smirnov distance between the empirical and true distribution for $X_0$ fell below $d$.  As can be seen from Figure \ref{F:Paths_Naive} and the summary statistics in Table \ref{T:summary_data_naive}, the naive simulation performs significantly worse: at the median it took $7,002$ paths and a simulation time of $8.66$ minutes to achieve the same level of accuracy as $1$ path ($2.94$ seconds) of the reversed process. Further, the histogram shows the presence of a significant number of trials where significantly more than the median number of paths were needed to achieve the given accuracy. 

\begin{figure}
\epsfig{file=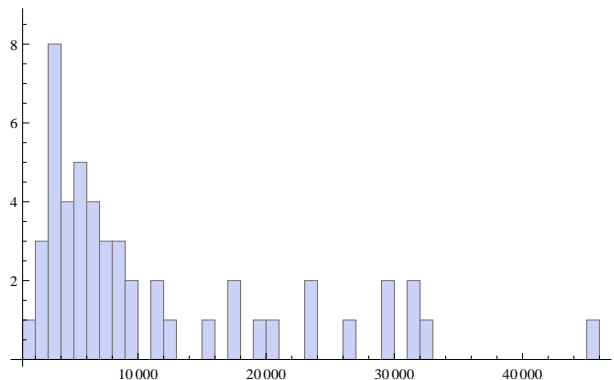,height=5cm,width=8cm}
\caption{Histogram for the number $N$ of paths necessary so that, using the naive simulation for $X_0$, the Kolmogorov-Smirnov distance between the empirical distribution and true distribution for $X_0$ fell below the median distance $d$ using Method A from Table \ref{T:summary_data}. The integral was computed using $T=100$ with mesh size of $\delta =1/24$; furthermore, the values $\gamma=2$ and $a=1$ we used. Computations were performed using \emph{Mathematica} and the code can be found on the author's website \url{www.math.cmu.edu/users/scottrob/research}.} \label{F:Paths_Naive}
\end{figure}

\begin{table}
\centering
\begin{tabular}{l l}
\hline
Summary for the Forward Simulation & \\
\hline
Median Number of Paths & 7,002 \\
Mean Number of Paths & 11,446\\
Standard Deviation & 10,165\\
Minimum Number of Paths & 1,846\\
Maximum Number of Paths & 45,004\\
Median Simulation Time (minutes) & 8.66\\
Mean Simulation Time (minutes) & 14.34\\
\hline
\end{tabular}
\caption{Summary statistics using the naive forward simulation method.}
\label{T:summary_data_naive}
\end{table}

\section{Conclusion} 

In this work, using the method of time reversal, an efficient method for simulating the joint distribution of $(Z_0,X_0)$ for perpetuities of the form \eqref{eq: X_def} is obtained. The joint distribution may be obtained by sampling a single path of the reversed process, as opposed to sampling numerous paths of $X_0$ using the naive method. However, the effectiveness of the proposed method depends on being to obtain analytic representations for the distribution $p$ of $Z_0$: an undertaking that, though always possible in the one-dimensional case, is often not possible for non-reversing multi-dimensional diffusions. Furthermore, results are presented for perpetuities with non-negative underlying cash flow rates. As such, more research is needed to identify an effective time reversal method for perpetuities of the form
\begin{equation*}
X_0 = \int_0^\infty D_t dF_t
\end{equation*}
for general Markovian processes $F$ (i.e., not just $dF_t = f(Z_t)dt$) containing both jumps and diffusive terms. Additionally, the performance of the method where $Z$ is run until a large time $2T$ and then setting $\ze_t = Z_{2T-t}$ must be thoroughly analyzed: in particular, how fast does the distribution of $Z_{2T}$ converge to $p$ given a fixed starting point?  To answer these questions, one must first analyze the resultant backwards dynamics and associated PDEs for the invariant density, obtaining rates of convergence. 

\section{Proofs from Section \ref{subsec: X_finite}}\label{sec: X0_finite}

We present here the proofs of Lemma \ref{lem: X_finite_A} and Lemma \ref{lem: X_infinite_A}.

\begin{proof}[Proof of Lemma \ref{lem: X_finite_A}]
Let $\eps > 0$ be as in \eqref{eq: eps_finite_cond}. Assume first that $\theta'c\theta + \eta'\eta \equiv 0$. Then $R = \int_0^\cdot a(Z_t)\ud t$ and \eqref{eq: eps_finite_cond} reads $a^- \in \Lb^1(E,p)$ and $\int_E a(z) p(z) \ud z > 0$. Set $\kappa \dfn(1/4) \int_E a(z) p(z) \ud z > 0$. Fix $z\in E$ and denote by $\prob^z$ the probability obtained by conditioning upon $Z_0 = z$. The positive recurrence of $Z$ implies (\cite[Theorem 4.9.5]{MR1326606}), there exists a $\prob^z$-a.s. finite random variable $T(z)$ such that $t \geq T(z)$ implies that $R_t = \int_0^t a(Z_u)\ud u \geq 2 \kappa t $ and hence the first conclusion of Lemma \ref{lem: X_finite_A} holds. Furthermore, since $Z$ is stationary, ergodic under $\prob$, the ergodic theorem implies there is a $\prob$ a.s. finite random variable $T$ such that $t\geq T$ implies $R_t \geq 2\kappa t$.  Now, let $n\in\mathbb{N}$ be such that $n > 1/(2\kappa)$. We have
\begin{equation*}
\sup_{t\geq 0} (t/n - R_t) \leq \sup_{t\leq T}(t/n-R_t) < \infty,
\end{equation*}
where the last inequality follows by the regularity of $a$ and the non-explositivity of $Z$. Thus
\begin{equation*}
X_0  = \int_0^\infty e^{-R_t}f(Z_t)\ud t \leq e^{\sup_{t\leq T}(t/n-R_t)}\int_0^\infty e^{-t/n}f(Z_t)\ud t.
\end{equation*}
By the stationarity of $Z$:
\begin{equation*}
\expec\bra{\int_0^\infty e^{-t/n}f(Z_t)\ud t} = \int_0^\infty e^{-t/n}\expec\bra{f(Z_t)}\ud t = n \int_E f(z)p(z)dz < \infty,
\end{equation*}
hence $\prob\bra{\int_0^\infty e^{-t/n}f(Z_t)dt < \infty} = 1$, which in turn implies that $\prob\bra{X_0 < \infty} = 1$.




Assume now that $\theta'c\theta + \eta'\eta \not\equiv 0$, which by continuity of all involved functions implies that $\int_E \pare{\theta'c\theta + \eta'\eta} (z) p(z) \ud z > 0$. Fix $z\in E$. Positive recurrence of $Z$ gives that $\lim_{t\to\infty} \int_0^t(\theta'c\theta+\eta'\eta)(Z_u)\ud u = \infty$ with $\prob^z$ probability one. On the event $\big \{ \int_0^t(\theta'c\theta +
\eta'\eta)(Z_u)\ud u > 0 \big \}$, note that
\begin{equation*}
-R_t = -\int_0^t a(Z_u)\ud u + \int_0^t(\theta'c\theta +
\eta'\eta)(Z_u)\ud u\left(-\frac{1}{2} -\frac{\int_0^t\theta'\sigma(Z_u)\ud W_u +
    \eta(Z_u)\ud B_u}{\int_0^t(\theta'c\theta+\eta'\eta)(Z_u)\ud u}\right).
\end{equation*}
By the Dambis, Dubins and Schwarz theorem and the strong law of large numbers for Brownian motion, it follows that there exists a $\prob^z$-a.s. finite random variable $T(z)$ such that
\begin{equation*}
t \geq T(z) \quad \Longrightarrow \quad -\frac{\int_0^t\theta'\sigma(Z_u)\ud W_u +
    \eta(Z_u)\ud B_u}{\int_0^t(\theta'c\theta+\eta'\eta)(Z_u)\ud u} \leq
  \frac{\eps}{2};
\end{equation*}
therefore,
\begin{equation*}
t \geq T(z) \quad \Longrightarrow \quad  -R_t \leq -\int_0^t\left(a + \frac{1-\eps}{2}(\theta'c\theta +
  \eta'\eta)\right)(Z_u)\ud u.
\end{equation*}
With $\kappa \dfn (1/4)\int_E ( a + (1-\eps)(\theta'c\theta+\eta'\eta) /2)(z) p(z) \ud z > 0$, and increasing $T(z)$ if necessary (still keeping it $\prob^z$-a.s. finite), it follows that $t\geq T(z)$ implies $-R_t \leq - 2 \kappa
t$. Hence the first part of Lemma \ref{lem: X_finite_A} holds true again. Additionally, the ergodic theorem applied with $\prob$ gives a $\prob$-a.s. finite random variable $T$ such that $t\geq T$ implies $-R_t \leq -2\kappa t$.  Again, for $n\in\mathbb{N}$ such that $n > 1/(2\kappa)$ we have
\begin{equation*}
X_0 = \int_0^\infty e^{-R_t}f(Z_t)\ud t \leq e^{\sup_{t\leq T}(t/n-R_t)}\int_0^\infty e^{-t/n}f(Z_t)\ud t.
\end{equation*}
from which $\prob \bra{X_0 < \infty} = 1$ follows by the same line of reasoning as above.
\end{proof}

\begin{proof}[Proof of Lemma \ref{lem: X_infinite_A}]
The proof is nearly identical that if Lemma \ref{lem: X_finite_A}. Namely,
in each of the cases $\theta'c\theta + \eta'\eta\equiv 0$ and
$\theta'c\theta + \eta'\eta\not\equiv 0$, under the given hypothesis there
is a constant $\kappa \geq0$ and a $\prob$-a.s. finite random variable $T$ such that $-R_t \geq \kappa t$ holds for $t\geq T$. This gives that
\begin{equation}\label{eq: X_infinite_A1}
X_0 \geq \int_T^\infty e^{\kappa t}f(Z_t)\ud t \geq \int_T^\infty e^{\kappa t}(f\wedge N)(Z_t)dt,
\end{equation}
where $N$ is large enough so that $\int_E (f(z) \wedge N) p(z) \ud z > 0$.  We thus have
\begin{equation*}
X_0 \geq \int_0^\infty e^{\kappa t}(f\wedge N)(Z_t)dt - \frac{N}{\kappa}(e^{\kappa T}-1).
\end{equation*}
Ergodicity of $Z$ implies that $\prob$ almost surely
\begin{equation*}
\lim_{u\to\infty}\frac{1}{u}\int_0^u(f\wedge N)(Z_t)\ud t = \int_E \pare{f(z)\wedge N} p(z) \ud z > 0,
\end{equation*}
so that $\lim_{u\to\infty} \int_0^ue^{\kappa t} (f\wedge N)(Z_t)\ud t = \infty$, proving the result.
\end{proof}

\section{Proof of Theorem \ref{thm: pde_soln}} \label{sec: proof_of_pde}

Under the given assumptions there exists a unique solution, $(\prob^{z,x})_{(z,x)\in F}$ to the generalized martingale problem for $L$ on $F$, where $L$ is from \eqref{eq: L_def}. Here, the measure space is $(\tOmega,\tbF)$, where $\tOmega = (C[0,\infty);
\hat{F})$, with $\hat{F}$ being the one-point compactification of $F$. The filtration $\tbF$ is the right-continuous enlargement of the filtration generated by the coordinate process $(\widetilde{Z}, \widetilde{Y})$ on $\tOmega$.

Let $(F_n)_{\nin}$ be an increasing sequence smooth, bounded, open, connected domains of $F$ such that $F = \cup_{n} F_n$. Note that $F_n$ can be obtained by smoothing out the boundary of $E_n\times (1/n,n)$. By uniqueness of solutions to the generalized martingale problem, for each $n$, the law of of $(\widetilde{Z},\widetilde{Y})$ is the same as the law of $(Z,Y^x)$ under $\prob\bra{\cdot\such Z_0 =z}$ (where the latter will always denote a version of the conditional probability) up until the first exit time of $F_n$. Furthermore, since the process $Z$ is recurrent, with $(\prob^z)_{z\in E}$ being the restriction of $(\prob^{z,x})_{(z,x)\in F}$ to the first $d$ coordinates, for $z \in E$, the law of $\widetilde{Z}$ under $\prob^z$ is the same as the law of $Z$ under $\prob \bra{\, \cdot \such Z_0 = z}$. For these reasons, and in order to ease the reading, we abuse notation and still use $(Z, Y)$ instead of $(\widetilde{Z}, \widetilde{Y})$ for the coordinate process on $\tOmega$. The underlying space we are working on will be clear from the context.

Denote by $\tau_n$ the first exit time of $(Z,Y)$ from $F_n$.  Assumption \ref{ass: standing} implies $Z$ does not explode under $\prob^{z,x}$ and $Y$ cannot explode to infinity since $D$ is strictly positive almost surely under $\prob \bra{\, \cdot \such Z_0 = z}$ for all $z \in E$. Therefore, the explosion time $\tau\dfn \lim_{n\to\infty} \tau_n$ for $(Z,Y)$ is the first hitting time of $Y$ to $0$ and the law of $\tau$ under $\prob^{z,x}$ is the same as the law of the first hitting of $Y^x$ to $0$ under $\prob \bra{\, \cdot \such Z_0 = z}$.

Note that $Y^x_t = D_t^{-1}\left(x-X_0 + \int_t^\infty D_u
  f(Z_u)\ud u\right)$. Assumption \ref{ass: standing} implies\footnote{This follows by the ergodic theorem since
  $\cbra{\int_t^\infty f(Z_u)D_u\ud u = 0} \subset \cbra{\lim_{k\to\infty}
    (1/k)\int_t^{t+k}f(Z_u)\ud u = 0}$.}
\begin{equation}\label{eq: Xt_Pz_pos}
\prob \bra{\int_t^\infty D_uf(Z_u)\ud u > 0 \ \Big| \ Z_0 = z} = 1, \qquad z\in E, \quad t\geq
0.
\end{equation}
Therefore,
\begin{equation*}
g(z,x) = \prob \bra{X_0\leq x \such Z_0 = z} = \prob^{z, x} \bra{Y^x_t > 0, \
  \forall t\geq 0} = \prob^{z,x}\bra{\tau =
  \infty}.
\end{equation*}
Define
\begin{equation}\label{eq: h_def}
h(z,x) \dfn \prob^{z,x}\bra{\lim_{t\to\infty} Y_t =
  \infty}, \quad (z,x) \in F
\end{equation}
Fix $(z,x)\in F$ and let $0 < \eps < x$. Note that $Y^{x}_t = Y^{x-\eps}_t + \eps / D_t$. Since $\lim_{t \to \infty} D_t = 0$ holds $\prob \bra{ \cdot \such Z_0 = z}$-a.s., it follows that
\begin{equation}\label{eq: tempp2}
\begin{split}
\prob^{z,x-\eps}\bra{\tau = \infty} &=\prob \bra{Y^{x-\eps}_t > 0\
  \forall t\geq 0 \such Z_0 = z} \\
  &\leq \prob\bra{Y^x_t \geq  \eps / D_t, \ \forall t\geq 0 \such Z_0 = z}\\
  &\leq \prob\bra{\lim_{t \to \infty}Y^x_t = \infty \such Z_0 = z} \\
&= \prob^{z,x}\bra{\lim_{t\to\infty}Y_t = \infty} \leq \prob^{z,x}\bra{\tau = \infty}.
\end{split}
\end{equation}
Therefore, $g(z,x-\eps)\leq h(z,x) \leq g(z,x)$. By definition, $g(z,x)$ is
right-continuous in $x$ for a fixed $z$ and so
\begin{equation*}
g(z,x) \leq \liminf_{\eps\to 0} h(z,x+\eps) \leq
\limsup_{\eps\to 0}h(z,x+\eps) \leq \limsup_{\eps\to 0}
g(z,x+\eps) = g(z,x).
\end{equation*}
Therefore, if $h(z,x)$ is continuous it follows that $h(z,x) = g(z,x)$.  It is
now shown that in fact $h$ is in $C^{2,\gamma}(F)$ and satisfies
$Lh = 0$. This gives the desired result for $g$ since $g = h$.

Let $\psi:(0,\infty)\mapsto (0,1)$ be a smooth function such that
$\lim_{x\to 0}\psi(x) = 0$, $\lim_{x\to\infty}\psi(x) = 1$. By the classical Feynman-Kac formula
\begin{equation*}
u^n(z,x)\dfn \expec^{\prob^{z,x}}\bra{\psi(Y_{\tau_n})},
\end{equation*}
satisfies $Lu^n = 0$ in $F_n$ with $u^n(z,x) = \psi(x)$ on
$\partial F_n$.  Since $\prob \bra{X_0 < \infty \such Z_0 = z} = 1$ there exists a
pair $(z_0,x_0) \in F$ so that $\prob \bra{X_0 < x_0 \such Z_0 = z_0} > 0$.  Using \eqref{eq: tempp2} this gives
\begin{equation}\label{eq: tempp1}
h(z_0,x_0) \geq \prob\bra{X_0 < x_0\such Z_0=z_0} > 0.
\end{equation}
Therefore, $(\prob^{z,x})_{(z,x)\in F}$ is transient \cite[Chapter
2]{MR1326606} and, since $(\prob^z)_{z\in E}$ is positive recurrent, this implies that for all $(z,x)$, with
$\prob^{z,x}$-probability one, either $\lim_{t\to\tau}Y_t = 0$ or
$\lim_{t\to\tau} Y_t = \infty$, where in the latter case, $\tau =
\infty$  since $Y$ cannot explode to $\infty$. This in turn yields that
$Y_{\tau_n}\rightarrow 0$ or $Y_{\tau_n}\rightarrow \infty$ with
$\prob^{z,x}$-probability one and hence by the dominated convergence
theorem
\begin{equation}\label{eq: tempp0}
\lim_{n\to\infty} u^n(z,x) = \prob^{z,x}\bra{\lim_{t\to\tau}
  Y_t = \infty} = \prob^{z,x}\bra{\lim_{t\to\infty}Y_t  =\infty} = h(z,x).
\end{equation}
For $(z_0,x_0)$ from \eqref{eq: tempp1}, $g(z_0,x_0)\geq h(z_0,x_0) > 0$ and hence $g(z,x) > 0$ for all $(z,x)\in F$ \cite[Theorem
1.15.1]{MR1326606}. But this implies $h(z,x) \geq g(z,x/2) > 0$, and so from \eqref{eq: tempp0} the $u^n$ are converging point-wise to a
strictly positive function. Thus, by the interior Schauder estimates and
Harnack's inequality, it follows by ``the standard
compactness'' argument (\cite[Page 147]{MR1326606}) that there exists a $C^{2,\gamma}(F)$,
strictly positive, function $u$ such that $u^n$ converges to $u$ in the
$C^{2,\gamma}(D)$ Holder space for all compact $D\subset F$.
Clearly, this function $u$ satisfies $Lu = 0$ in $F$.
In fact, since $u^n$ converges to $h$ pointwise, $h=u$ and hence $Lh = 0$.

The boundary conditions for $g$ are now considered. Let the integer $k$ be
given. It suffices to show for each $\eps > 0$ there is some $n(\eps)$ such that
\begin{equation}\label{eq: g_bc_eps}
\sup_{x\leq n(\eps)^{-1},z\in E_{k}}g(z,x)\leq \eps;\qquad \inf_{x\geq n(\eps),z\in
  E_{k}}g(z,x)\geq 1-\eps
\end{equation}
The condition near $x = 0$ is handled first. By way of contradiction, assume
there exists some $\eps > 0$ such that for all integers $n$
there exists $z_n\in E_{k}$, $x_n \leq 1/n$ such that $g(z_n,x_n) >
\eps$.  Since the $z_n$ are all contained within $E_k$ there is a sub-sequence
(still labeled $n$) such that $z_n\rightarrow z$ for $z\in\bar{E}_k$.

Let $\delta > 0$ and choose $N_\delta$ such that $n\geq N_{\delta}$ implies
  $n^{-1}\leq\delta$. Since $g$ is increasing in $x$,  $\eps <
g(z_n,\delta)$.  Since $g$ is continuous, $\eps\leq g(z,\delta)$.  Since
this is true for all $\delta > 0$, $\lim_{x\to 0} g(z,x)\geq
\eps$. But, this is a contradiction : $\lim_{x\to 0}g(z,x) = 0$ for
each $z\in E$.  To see this,
let $\delta > 0$ and choose $\beta > 0$ such that
$\prob \bra{X_0 \geq \beta \such Z_0 = z} \geq 1-\delta$.  This is possible in view of \eqref{eq: Xt_Pz_pos}. Thus, for $x < \beta$, $g(z,x) \leq \prob \bra{X_0 < \beta \such Z_0 = z} \leq \delta$ and hence $\limsup_{x\to 0} g(z,x) \leq \delta$. Taking
$\delta\to 0$ gives the result.

The proof for $x\to\infty$ is very similar. Assume by contradiction that
there is some $\eps > 0$ such that for all
integers $n$ there exist $z_n\in E_{k}$, $x_n \geq n$ such that $g(z_n,x_n) <
1-\eps$. Again, by taking sub-sequences, it is possible to assume
$z_n\rightarrow z\in\bar{E}_k$. Fix $M > 0$.  For $n\geq M$, since $g$ is increasing in
$x$,  $g(z_n,M) < 1-\eps$.  Since $g$ is continuous, $g(z,M)\leq 1-\eps$.
Since this holds for all $M$, $\lim_{x\to\infty}g(z,x) \leq 1-\eps$. But, this violates the condition that under $\prob \bra{\cdot \such Z_0 = z}$, $X_0<\infty$ almost surely.

The uniqueness claim is now proved. Let $\tilde{g}$ be a $C^2(F)$
solution of $L\tilde{g} = 0$ such that $0\leq \tilde{g}\leq 1$ and such that
\eqref{eq: g_bc} holds. Define the stopping times
\begin{equation}\label{eq: stopping_times}
\sigma_k \dfn \inf\cbra{t\geq 0 : Z_t\not\in E_k};\qquad \rho_k \dfn
\inf\cbra{t\geq 0 : Y_t = k}.
\end{equation}
By Ito's formula, for any $k,n,m$
\begin{equation*}
\begin{split}
\tilde{g}(z,x) &= \expec^{\prob^{z,x}}\bra{g(Z_{\sigma_k\wedge\rho_{1/n}\wedge\rho_m},
  Y_{\sigma_k\wedge\rho_{1/n}\wedge\rho_m})\left(1_{\rho_{1/n} <
      \sigma_k\wedge \rho_m} + 1_{\rho_{1/n}\geq
      \sigma_k\wedge \rho_m}(1_{\tau <\infty} + 1_{\tau =\infty})\right)}.
\end{split}
\end{equation*}
Since $\prob^{z,x}$ almost surely $\lim_{m\to\infty} \rho_m = \infty$, taking $m\to\infty$ yields
\begin{equation*}
\begin{split}
\tilde{g}(z,x) &= \hat{\expec}^{\prob^{z,x}}\bra{g(Z_{\sigma_k\wedge\rho_{1/n}},
  Y_{\sigma_k\wedge\rho_{1/n}})\left(1_{\rho_{1/n}<
      \sigma_k} + 1_{\rho_{1/n}\geq\sigma_k}(1_{\tau <\infty} + 1_{\tau=\infty})\right)}.
\end{split}
\end{equation*}
On $\cbra{\rho_{1/n} < \sigma_k}$, $Z_{\rho_{1/n}}\in E_k$, $Y_{\rho_{1/n}}\leq 1/n$ and hence by $0\leq
\tilde{g}\leq 1$ and \eqref{eq: g_bc}, for any $\eps > 0$ there is an $n(\eps)$
such that for $n\geq n(\eps)$
\begin{equation*}
\begin{split}
\tilde{g}(z,x) &\leq \eps +  \prob^{z,x}\bra{\rho_{1/n}\geq\sigma_k,\tau<\infty} + \prob^{z,x}\bra{\rho_{1/n}\geq\sigma_k, \tau=\infty}.
\end{split}
\end{equation*}
Taking $n\to\infty$ thus gives
\begin{equation*}
\begin{split}
\tilde{g}(z,x) &\leq \eps +  \prob^{z,x}\bra{\tau \geq\sigma_k,\tau<\infty} + \prob^{z,x}\bra{\tau=\infty}.
\end{split}
\end{equation*}
Taking $k\to\infty$ gives
\begin{equation*}
\begin{split}
\tilde{g}(z,x) &\leq \eps + \prob^{z,x}\bra{\tau=\infty}.
\end{split}
\end{equation*}
and hence taking $\eps\to 0$ gives $\tilde{g}(z,x) \leq
\prob^{z,x}\bra{\tau = \infty} = g(z,x)$.  Similarly, for $k,n,m$
\begin{equation*}
\begin{split}
\tilde{g}(z,x) &= \expec^{\prob^{z,x}}\bra{g(Z_{\sigma_k\wedge\rho_{1/n}\wedge\rho_m},
  Y_{\sigma_k\wedge\rho_{1/n}\wedge\rho_m})\left(1_{\rho_{m}<
      \sigma_k\wedge\rho_{1/n}} + 1_{\rho_{m}\geq
      \sigma_k\wedge\rho_{1/n}}\right)},\\
&\geq (1-\eps)\hat{\prob}^{z,x}\bra{\rho_{m}<
      \sigma_k\wedge \rho_{1/n},\lim_{t\to\infty}Y_t = \infty},
\end{split}
\end{equation*}
for all $\eps > 0$ and $m\geq m(\eps)$ for some $m(\eps)$.  Note that the set
$\cbra{\rho_m < \sigma_k\wedge\rho_{1/n}}$ is restricted to include
$\cbra{\lim_{t\to\infty} Y_t = \infty}$ but this is fine since lower bounds are considered. Now, on the event $\cbra{\lim_{t\to\infty} Y_t = \infty}$ it holds that $\rho_{1/n}\rightarrow \infty$.
Thus, taking $n\to\infty$
\begin{equation*}
\tilde{g}(z,x)\geq (1-\eps)\prob^{z,x}\bra{\rho_m < \sigma_k, \lim_{t\to\infty}Y_t =
  \infty}.
\end{equation*}
Taking $k\to\infty$ gives
\begin{equation*}
\tilde{g}(z,x)\geq (1-\eps)\prob^{z,x}\bra{\rho_m < \infty,  \lim_{t\to\infty}Y_t =
  \infty}.
\end{equation*}
Taking $m\to\infty$ and noting that for $m$ large enough $\rho_m < \infty$ on $\lim_{t\to\infty}Y_t =
  \infty$ it holds that
\begin{equation*}
\tilde{g}(z,x) \geq (1-\eps)\prob^{z,x}\bra{\lim_{t\to\infty}Y_t=\infty} = (1-\eps) h(z,x).
\end{equation*}
where the last equality follows by the definition of $h$ in \eqref{eq: h_def}.  Now, in proving $Lg = 0$ it was shown that $g = h$ and hence $\tilde{g}(z,x)\geq (1-\eps)g(z,x)$. Taking $\eps\to 0$ gives that $\tilde{g}(z,x) \geq g(z,x)$, finishing the proof.

\section{Dynamics for the Time-Reversed Process}\label{sec: tr}

The goal of the next two sections is to prove Theorem \ref{thm:main}. We keep all notation from Subsection \ref{subsec: time_reverse}. We first identify the dynamics for $\ze^T$.

\begin{prop}\label{prop: zeta_t_dynamics}

Let Assumptions \ref{ass: standing} hold.  Then, for each $T > 0$, the law of $\ze^T$ under $\prob$ solves the martingale problem on $E$ (for $t\leq T$) for the operator $L^\ze \dfn (1/2)c^{ij}\partial^2_{ij} +
\mu^i\partial_i$ where
\begin{equation}\label{eq: mu_def}
\mu \dfn c\frac{\nabla p}{p} + \dvg{c} - m.
\end{equation}
The operator $L^{\ze}$ does not depend upon $T$. Thus, if $\left(\qprob^{z}\right)_{z\in E}$ denotes the solution of the generalized
  martingale problem for $L^\ze$ on $E$, then
  in fact $(\qprob^{z})_{\ze\in E}$ solves the martingale problem for
  $L^{\ze}$ on $E$ and is positive recurrent.
\end{prop}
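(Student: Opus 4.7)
The plan is to apply the diffusion time-reversal formula of \cite{MR866342}. Under Assumption~\ref{ass: standing}, the forward process $Z$ is stationary on $E$ with diffusion matrix $c \in C^{2,\gamma}(E;\spos^d)$ and strictly positive invariant density $p \in C^{2,\gamma}(E;\Real)$. These are precisely the regularity hypotheses needed to conclude that, for any fixed $T>0$, the time-reversed process $\ze^T_t = Z_{T-t}$, $t\in[0,T]$, is a diffusion in the (completed, right-continuous) filtration it generates, with unchanged diffusion matrix $c$ and drift given component-wise by
\[
\widetilde{\mu}^i(z) = -m^i(z) + \frac{1}{p(z)}\partial_j \pare{c^{ij}(z)p(z)} = -m^i(z) + (\partial_j c^{ij})(z) + c^{ij}(z)\frac{\partial_j p(z)}{p(z)}.
\]
This is exactly $\mu$ as defined in \eqref{eq: mu_def}, and the formula manifestly does not involve $T$. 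Hence on $[0,T]$ the law of $\ze^T$ under $\prob$ solves the martingale problem for $L^\ze = (1/2) c^{ij} \partial^2_{ij} + \mu^i \partial_i$ on $E$.

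To pass from the finite-horizon statement to a globally well-posed martingale problem, I would use stationarity: for any $T' > T$, the restriction of $\ze^{T'}$ to $[0,T]$ agrees in law with $\ze^T$ (both are $Z$ reversed from a stationary instant), so the finite-dimensional distributions are consistent as $T \to \infty$ and yield a process on $[0,\infty)$ with generator $L^\ze$ and one-dimensional marginal $p(z)\ud z$ at every time. Because $p \in C^{2,\gamma}(E;\Real)$ and $p > 0$ on $E$, one has $\mu \in C^{1,\gamma}(E;\Real^d)$, so together with $c \in C^{2,\gamma}(E;\spos^d)$ from (A2), the operator $L^\ze$ falls into the framework of \cite{MR1326606}, and the generalized martingale problem $(\qprob^z)_{z \in E}$ on $E$ is well posed.

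It remains to rule out explosion and establish positive recurrence. Non-explosion follows because the stationary process constructed above has marginal density $p$ at all times and so $\prob$-almost surely does not reach $\partial E$ in finite time; by uniqueness of the generalized martingale problem, this transfers to each $\qprob^z$, so $(\qprob^z)_{z \in E}$ actually solves the (non-exploding) martingale problem on $E$. Since $\tilde{L}^\ze p = 0$ in the sense of distributions (this is the defining relation that the drift $\mu$ makes $p$ invariant), the existence of an integrable invariant density together with non-explosion gives recurrence via \cite[Corollary 4.9.4]{MR1326606}, and then positive recurrence via \cite[Theorems 4.3.3, 4.9.5]{MR1326606}.

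The main obstacle is verifying that the hypotheses of the reversal theorem of \cite{MR866342}—in particular the appropriate local integrability of $\nabla (cp)/p$—are in force under Assumption~\ref{ass: standing}; once this is checked, the closed-form drift $\mu$ drops out and the remaining steps are standard applications of the semigroup and recurrence theory in \cite{MR1326606}.
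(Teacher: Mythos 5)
You correctly identify the Haussmann--Pardoux time-reversal theorem \cite{MR866342} and derive the right drift $\mu$, but the first half of the argument is left as an acknowledged gap: you write that ``the main obstacle is verifying that the hypotheses of the reversal theorem of \cite{MR866342} \ldots are in force,'' and then do not verify them. This is the substance of the proof, not a side issue. The paper verifies two conditions from \cite{MR866342}: their equation (2.5), a Fokker--Planck condition on the one-dimensional marginals, which holds because $Z$ is stationary and $\tilde{L}^Z p = 0$; and their equation (2.7), a regularity condition verified by considering $v(s,z) = \expec\bra{g(Z_t)\such Z_s = z}$ for $g\in C^\infty_c(E)$ and invoking the Feynman--Kac representation $v_s + L^Z v = 0$ with terminal data $g$. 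Your passing reference to ``local integrability of $\nabla(cp)/p$'' neither identifies nor checks these conditions, and without them the formal reversal computation on p.~1191 of \cite{MR866342} is not licensed here.

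On positive recurrence you and the paper take genuinely different routes. The paper computes the formal adjoint and shows $\tilde{L}^\ze f = \tilde{L}^Z f - 2\nabla\cdot\bigl((f/p)\bigl(\tfrac12(c\nabla p + p\,\dvg{c}) - pm\bigr)\bigr)$, hence $\tilde{L}^\ze p = 0$ from $\tilde{L}^Z p = 0$; it then observes that the $p$-h-transform $\tilde{L}^{\ze,p}$ is exactly $L^Z$, so non-explosion and positive recurrence of $L^\ze$ are read off from the ergodicity of $Z$ via \cite[Theorem 4.8.5]{MR1326606}. You instead extend $\ze^T$ to a stationary process on $[0,\infty)$ by Kolmogorov consistency and read off non-explosion from the fact that its marginal is $p$ at every fixed time, then invoke the integrable invariant density for recurrence. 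This alternative works, provided you also invoke the explosion zero--one dichotomy to pass from non-explosion for $p$-a.e.\ initial point to all $z\in E$, which you compress into ``by uniqueness \ldots this transfers.'' One further imprecision: you describe $\tilde{L}^\ze p = 0$ as ``the defining relation that the drift $\mu$ makes $p$ invariant.'' It is not a definition --- $\mu$ is defined by the time-reversal formula --- but a (short) consequence of $\tilde{L}^Z p = 0$ that needs to be carried out, as the paper does.
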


\begin{rem} If $Z$ is reversing then $p$ satisfies $m = (1/2)\left(c\nabla{p}/p + \dvg{c}\right)$. Thus, in this instance, $\mu = m$ and, as the name suggests, $\ze^T$ has the same dynamics as $Z$.

\end{rem}

\begin{proof}
The first statement regarding the martingale problem is based off the argument in \cite{MR866342}. Since $Z$ is positive recurrent with invariant measure $p$ and  $Z_0$ has initial distribution $p$ under $\prob$, $Z$ is stationary with distribution $p$. Since $\tilde{L}^Zp = 0$, equation $(2.5)$ in \cite{MR866342} holds noting that $p$ does not depend upon $t$.

For a given $s\leq t \leq t$ and $g\in C^{\infty}_c(E)$ define the function $v(s,z) \dfn \expec\bra{g(X_t)\big | Z_s = z}$.  The Feynman-Kac formula implies $v$ satisfies $v_s + L^zv = 0$ on $0 <  s < t, z \in E$ with $v(t,z) = g(z)$ : see \cite{Heath-Schweizer,MR3146488} for an extension of the classical Feynman-Kac formula to the current setup. Therefore, the condition in equation $(2.7)$ of \cite{MR866342} holds as well. Thus, the formal argument on page $1191$ of \cite{MR866342} is rigorous and the law of $\ze^T$ under $\prob$ solves the martingale problem for $L^\ze$.

Turning to the statement regarding $\left(\qprob^z\right)_{z\in E}$, set $\tilde{L}^{\ze}$ as the formal adjoint to $L^{\ze}$.  $\tilde{L}^{\ze}$ is given by \eqref{eq: tildeLZ} with $\mu$ replacing $m$ therein.  Using the formula for $\mu$ in \eqref{eq: mu_def} and for $\tilde{L}^Z$ in \eqref{eq: tildeLZ} calculation shows that
\begin{equation*}
\tilde{L}^{\ze}f = \tilde{L}^Z f - 2\nabla\cdot\left(\frac{f}{p}\left(\frac{1}{2}\left(c\nabla p + p\dvg{c}\right)-pm\right)\right).
\end{equation*}
Since
\begin{equation}\label{eq: p_density_pde}
0 = \tilde{L}^Zp = \nabla\cdot\left(\frac{1}{2}\left(c\nabla p + p\dvg{c}\right) - pm\right),
\end{equation}
it follows by considering $f = p$ above that $\tilde{L}^{\ze}p = 0$. Therefore, $p$
is an invariant density for $L^\ze$ if an only if the diffusion corresponding
to the operator $\tilde{L}^{\ze,p}$ does not explode, where
$\tilde{L}^{\ze,p}$ is the h-transform of $\tilde{L}^{\ze}$ \cite[Theorem
4.8.5]{MR1326606}.  But, by definition of the h-transform \cite[pp. 126]{MR1326606} and \eqref{eq: tildeLZ} with $\mu$ replacing $m$:
\begin{equation*}
\begin{split}
\tilde{L}^{\ze,p}f &\dfn \frac{1}{p}\tilde{L}^{\ze}(fp) = \frac{1}{2}c^{ij}\partial^2_{ij}f -\left(\mu^i - \dvg{c}^i-\left(c\frac{\nabla p}{p}\right)^i\right)\partial_i f + \frac{f}{p}\tilde{L}^{\ze}p,\\
&= \frac{1}{2}c^{ij}\partial^2_{ij}f + m^i\partial_i f = L^Zf,
\end{split}
\end{equation*}
where the third equality follows from \eqref{eq: mu_def}. Thus, Assumption \ref{ass: standing} (specifically the fact that $Z$ is ergodic and $\int_E p(z)dz = 1$) implies the diffusion for $\tilde{L}^{\ze,p}$ not only does not explode but also is positive recurrent, finishing the proof.
\end{proof}

In preparation for the proof of the main result of this Section, which is Proposition \ref{prop: zeta_chi_t_dynamics},
it is first needed to define a certain ``backwards'' filtration $\bG^T$ and to present two Lemmas. Fix $T \in (0, \infty)$ and $t \in [0, T]$ and let $\widetilde{\G}^T_t$ be the $\sigma$-field generated by $X_T$, $(Z_{T-u})_{u\in [0,t]}$, $(W_T - W_{T - u})_{u \in [0, t]}$ and $(B_T - B_{T - u})_{u \in [0, t]}$. Then, let $\bG^T \dfn (\G^T_t)_{t \in [0, T]}$ be the usual augmentation of $(\widetilde{\G}^T_t)_{t \in [0, T]}$. It is easy to check that $(\chi^T, \ze^T)$ is $\bG^T$-adapted for all $T \in \Real_+$, as well as that the process $B^T$ defined via $B^T_t \dfn B_{T - t} - B_{T}$ is a $k$ dimensional Brownian motion on $(\Omega, \bG^T, \prob)$, independent of $(\chi^T_0, \ze^T_0)= (X_T, Z_T)$. However, the $\bG^T$-adapted process $(W_{T - t} - W_{T})_{t \in [0,
  T]}$ is not necessarily a Brownian motion on $(\Omega, \bG^T, \prob)$.

With this notation, the following two Lemmas are essential for proving
Proposition \ref{prop: zeta_chi_t_dynamics}.

\begin{lem} \label{lem: int_time_rev}
Let Assumptions \ref{ass: standing} hold. For locally bounded Borel function $\eta: E \mapsto \Real$ and $0 \leq s \leq t \leq T$, it holds that
\begin{equation} \label{eq: int_wrt_B}
- \int_{T - t}^{T - s} \eta(Z_u)' \ud B_u = \int_{s}^{t} \eta(\zeta^T_u)' \ud B^T_u.
\end{equation}
Furthermore, if $\theta: E \mapsto \Real^d$ is continuously differentiable, then
\begin{equation} \label{eq: int_wrt_W}
- \int_{T - t}^{T - s} \theta'(Z_u)\ud Z_u  = \int_{s}^{t} \theta'(\zeta^T_u)\ud \zeta^T_u + \int_s^t\left(\nabla\cdot(c\theta)-\theta'\dvg{c}\right)(\zeta^T_u) \ud u.
\end{equation}
\end{lem}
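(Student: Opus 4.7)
The plan is to prove both identities by approximating each stochastic integral by Riemann sums over partitions of $[T-t, T-s]$, using the substitution $u \mapsto T-u$ to convert them into Riemann sums for the time-reversed processes, and then identifying the limits via the difference between forward and backward Ito integrals. Fix a sequence of partitions $T-t = u^n_0 < \cdots < u^n_N = T-s$ of $[T-t, T-s]$ whose mesh tends to zero, and let $s = v^n_0 < \cdots < v^n_N = t$ be the corresponding partitions of $[s,t]$ defined by $v^n_j = T - u^n_{N-j}$. Using $B^T_v = B_{T-v} - B_T$ and $\zeta^T_v = Z_{T-v}$, a direct computation yields
\[
\sum_{i=0}^{N-1} \eta(Z_{u^n_i})'(B_{u^n_{i+1}} - B_{u^n_i}) \; = \; -\sum_{j=1}^{N}\eta(\zeta^T_{v^n_j})'\bigl(B^T_{v^n_j} - B^T_{v^n_{j-1}}\bigr),
\]
and an analogous identity with $\theta$ replacing $\eta$ and the semimartingale $Z$, $\zeta^T$ replacing $B$, $B^T$.

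For the first identity, reducing from locally bounded Borel $\eta$ to bounded continuous $\eta$ by a standard density argument in $L^2$, the left-hand side of the display converges (in probability) to the forward Ito integral $\int_{T-t}^{T-s} \eta(Z_u)' \ud B_u$. The right-hand side is a \emph{backward} Ito Riemann sum for $-\int_s^t \eta(\zeta^T_u)' \ud B^T_u$. The crucial point here is that $W$ and $B$ are independent under $\prob$, so $\zeta^T$ (as a functional of $W$ and $Z_0$) is independent of $B^T$; hence the quadratic covariation $[\eta(\zeta^T), B^T]$ vanishes and the backward Ito Riemann sum has the same limit as the forward one. Equating the two limits yields the first identity.

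For the second identity, the left-hand side again converges to the forward Ito integral $\int_{T-t}^{T-s} \theta'(Z_u) \ud Z_u$, while the right-hand side is a backward Riemann sum for $-\int_s^t \theta'(\zeta^T_u) \ud \zeta^T_u$. Now, however, $\theta(\zeta^T)$ has nontrivial covariation with the semimartingale $\zeta^T$, and the backward Riemann sum differs from the forward Ito integral by $-[\theta(\zeta^T), \zeta^T]_s^t$. Applying Ito's formula to the $C^1$ vector field $\theta$ along $\zeta^T$, which by Proposition \ref{prop: zeta_t_dynamics} has diffusion matrix $c$, gives
\[
[\theta(\zeta^T), \zeta^T]_s^t \; = \; \sum_i [\theta^i(\zeta^T), \zeta^{T,i}]_s^t \; = \; \int_s^t c^{ij}(\zeta^T_u)\, \partial_j \theta^i(\zeta^T_u)\, \ud u,
\]
and the elementary identity $\nabla\cdot(c\theta) = \theta'\dvg{c} + c^{ij}\partial_j\theta^i$ identifies the integrand with $\bigl(\nabla\cdot(c\theta) - \theta'\dvg{c}\bigr)(\zeta^T_u)$. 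Rearranging yields the second identity.

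The main technical step is the rigorous backward-to-forward Ito conversion, i.e., showing that the backward Riemann sums converge to the forward Ito integral plus (respectively minus) the appropriate covariation term. This is a standard result for continuous semimartingales, but here it must be combined with a localization along the exhausting sequence $(E_k)$ from Assumption \ref{ass: standing} since the model coefficients $\theta$, $c$, and the derivatives $\partial_j \theta^i$ and $\partial_j c^{ij}$ need only be locally bounded on $E$. Non-explosion of $Z$ (and thus of $\zeta^T$), guaranteed by Assumption \ref{ass: standing}, then allows passing $k \to \infty$ to remove the localization.
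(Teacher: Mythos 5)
Your proof follows essentially the same route as the paper: converting the left-endpoint Riemann sum over $[T-t, T-s]$ into a right-endpoint (backward) Riemann sum for the time-reversed processes over $[s,t]$, and identifying the discrepancy between the two conventions as a quadratic covariation --- which vanishes for the $B$-integral because $\ze^T$ and $B^T$ are independent, and contributes $\int_s^t\left(\nabla\cdot(c\theta)-\theta'\dvg{c}\right)(\zeta^T_u)\,\ud u$ for the $\ze^T$-integral. The paper packages the same computation slightly differently, by summing the two \emph{forward} Riemann sums and showing the sum converges to the negative of the covariation $\sum_i (\eta(Z_{u^n_i})-\eta(Z_{u^n_{i-1}}))'(B_{u^n_i}-B_{u^n_{i-1}})$; this is an equivalent rearrangement.

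The one step in your argument that does not quite hold as stated is ``applying Ito's formula to the $C^1$ vector field $\theta$.'' Ito's formula needs $C^2$; for merely $C^1$ $\theta$ the process $\theta(\ze^T)$ is not obviously a semimartingale, so the covariation $[\theta(\ze^T),\ze^T]$ cannot be computed by a direct appeal to Ito. The paper's proof fills exactly this gap: it first establishes \eqref{eq: int_wrt_W} for twice continuously differentiable $\theta$, where the covariation is a genuine semimartingale covariation and the Ito computation is standard, and then passes to $C^1$ $\theta$ via approximation by polynomials $\theta_n$ with $\theta_n\to\theta$ and $\nabla\theta_n\to\nabla\theta$ uniformly on compact subsets of $E$ (localizing along the $E_k$, as you correctly anticipate). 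The same remark applies in softer form to \eqref{eq: int_wrt_B}: the paper proves it first for $C^2$ $\eta$ and then reaches locally bounded Borel $\eta$ by a monotone class argument, whereas your proposed reduction to ``bounded continuous $\eta$'' should either be upgraded to smooth $\eta$ before the covariation step, or else the vanishing of the correction term must be argued directly from the conditional second-moment estimate (which does work by independence, but is not literally ``the quadratic covariation of $\eta(\ze^T)$ and $B^T$'' when $\eta(\ze^T)$ is not a semimartingale).
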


\begin{proof}
Fix $0 \leq s \leq t \leq T$. For each $\nin$ and $i \in \set{0 ,\ldots, n}$,
let
\begin{equation}\label{eq: uni_def}
u^n_i \dfn T - t + i(t-s)/n.
\end{equation}
First, assume that $\eta$ is twice continuously differentiable. The standard convergence theorem for stochastic integrals implies that (the following limit is to be understood in measure $\prob$):
\[
\int_{s}^{t} \eta(\zeta^T_u)' \ud B^T_u + \int_{T - t}^{T - s} \eta(Z_u)' \ud B_u = - \limn \pare{ \sum_{i=1}^n \pare{ \eta(Z_{u^n_{i}}) - \eta(Z_{u^n_{i-1}}) }' \pare{B_{u^n_{i}} - B_{u^n_{i-1}}} }.
\]
Since $B$ and $Z$ are independent, by Ito's formula the last quadratic covariation is zero. Therefore, \eqref{eq: int_wrt_B} holds for twice continuously differentiable $\eta$. The fact that \eqref{eq: int_wrt_B} holds whenever $\eta$ is locally bounded follows from a monotone class argument.

In a similar manner, assume that $\theta$ is twice continuously differentiable. The standard convergence theorem for stochastic integrals implies that
\begin{align*}
    \int_{s}^{t} \theta'(\zeta^T_u)\ud \zeta^T_u &+ \int_{T - t}^{T
      - s} \theta'(Z_u)\ud Z_u = - \limn \pare{ \sum_{i=1}^n \pare{\theta(Z_{u^n_{i}}) - \theta(Z_{u^n_{i-1}}) }'(Z_{u^n_{i}} - Z_{u^n_{i-1}}) }.
\end{align*}
The last quadratic covariation process (without the minus sign) is equal to
\[
\int_{T-t}^{T - s} \tilde{F}(c,\theta)(Z_u) \ud u =  \int_s^t \tilde{F}(c,\theta)(\zeta^T_u) \ud u,
\]
where $\tilde{F}(c,\theta): E \mapsto\Real$ is given by
\begin{equation*}
\tilde{F}(c,\theta) = \sum_{i,j=1}^{d}c^{ij}\partial_{z_i}\theta^j =
\sum_{i,j=1}^{d}\left(\partial_{z_i}(c^{ij}\theta^j)-\theta^j\partial_{z_i}((c')^{ji})\right)
= \nabla\cdot(c\theta) - \theta'\dvg{c},
\end{equation*}
since $c'=c$. Thus, \eqref{eq: int_wrt_W} is established in the case where $\theta$ is twice continuously differentiable. The fact that \eqref{eq: int_wrt_W} holds whenever $\theta$ is continuously differentiable follows form a density argument, noting that there exists a sequence $(\theta_n)_{\nin}$ of polynomials such that $\limn \theta_n = \theta$ and $\limn \nabla \theta_n = \nabla \theta$ both hold, where the convergence is uniform on compact subsets of $E$.
\end{proof}

\begin{lem} \label{lem: DeltaT_dyn}
Let Assumptions \ref{ass: standing} hold. For each $T \in \Real_+$, define the $\bG^T$-adapted continuous-path $\Delta^T$ as in \eqref{eq: Delta_def}. Then $\Delta^T$ is a semimartingale on $(\Omega, \bG^T, \prob)$. More
precisely, for $t\in[0,T]$
\begin{equation}\label{eq: Delta_dyn}
\begin{split}
\Delta^T_t  &= 1 + \int_0^{t}\Delta^T_u\pare{\theta'c\frac{\nabla p}{p} + \nabla\cdot (c\theta) - a}(\ze^T_u) \ud u\\
&\qquad\qquad + \int_0^t
\Delta^T_u \pare{\eta(\ze^T_u)'\ud B^T_u + \theta'\sigma(\ze^T_u)\ud W^T_u}.\end{split}
\end{equation}
\end{lem}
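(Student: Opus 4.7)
The plan is to apply Itô's formula to $\Delta^T_t = \exp(-(R_T - R_{T-t}))$, where $R = -\log D$ is given by \eqref{eq: R_defn}. First I would write
\[
R_T - R_{T-t} = \int_{T-t}^T \pare{a + \tfrac{1}{2}(\theta'c\theta + |\eta|^2)}(Z_u)\,\ud u + \int_{T-t}^T \theta(Z_u)'\sigma(Z_u)\,\ud W_u + \int_{T-t}^T \eta(Z_u)'\,\ud B_u,
\]
and transform each term to the $\bG^T$-filtration. The Lebesgue integral becomes $\int_0^t (a + \tfrac{1}{2}(\theta'c\theta + |\eta|^2))(\ze^T_v)\,\ud v$ under $u = T-v$. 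By \eqref{eq: int_wrt_B}, the $B$-integral equals $-\int_0^t \eta(\ze^T_u)'\,\ud B^T_u$. For the $W$-integral, I would first rewrite $\theta'\sigma\,\ud W = \theta'\,\ud Z - \theta' m\,\ud u$ via \eqref{eq: Z_defn} and then apply \eqref{eq: int_wrt_W} to the $\ud Z$-piece to obtain
\[
\int_{T-t}^T \theta(Z_u)'\sigma(Z_u)\,\ud W_u = -\int_0^t \theta(\ze^T_u)'\,\ud\ze^T_u - \int_0^t \pare{\nabla\cdot(c\theta) - \theta'\dvg{c} + \theta' m}(\ze^T_u)\,\ud u.
\]

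Next I would invoke Proposition \ref{prop: zeta_t_dynamics}: $\ze^T$ is a diffusion with generator $L^\ze$, so by setting $W^T_\cdot \dfn \int_0^\cdot \sigma^{-1}(\ze^T_u)\,\ud M^T_u$ (where $M^T$ is the $\bG^T$-martingale part of $\ze^T$) we may write $\ud\ze^T_u = \mu(\ze^T_u)\,\ud u + \sigma(\ze^T_u)\,\ud W^T_u$ with $\mu = c\nabla p/p + \dvg{c} - m$. Substituting this SDE into the previous display, the total drift from the $W$-piece is $-\theta'\mu - \nabla\cdot(c\theta) + \theta'\dvg{c} - \theta' m$, which simplifies cleanly to $-\theta' c\nabla p/p - \nabla\cdot(c\theta)$; the martingale contribution is $-\int_0^t \theta(\ze^T_u)'\sigma(\ze^T_u)\,\ud W^T_u$. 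Collecting all three pieces yields
\[
R_T - R_{T-t} = \int_0^t \pare{a + \tfrac{1}{2}(\theta'c\theta + |\eta|^2) - \theta'c\tfrac{\nabla p}{p} - \nabla\cdot(c\theta)}(\ze^T_u)\,\ud u - \int_0^t \theta(\ze^T_u)'\sigma(\ze^T_u)\,\ud W^T_u - \int_0^t \eta(\ze^T_u)'\,\ud B^T_u.
\]

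Finally I would apply Itô's formula to $\Delta^T = e^{-(R_T - R_{T-\cdot})}$. The quadratic variation of the martingale part of $-(R_T - R_{T-\cdot})$ is $(\theta'c\theta + |\eta|^2)(\ze^T_t)\,\ud t$, whose Itô half-correction exactly cancels the $\tfrac{1}{2}(\theta'c\theta + |\eta|^2)$ drift isolated above. What remains in $\ud\Delta^T_t/\Delta^T_t$ is precisely the drift $(\theta'c\nabla p/p + \nabla\cdot(c\theta) - a)(\ze^T_t)\,\ud t$ together with the martingale $\theta(\ze^T_t)'\sigma(\ze^T_t)\,\ud W^T_t + \eta(\ze^T_t)'\,\ud B^T_t$, which is exactly \eqref{eq: Delta_dyn}.

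The main obstacle is the drift bookkeeping in the second step: the terms $\theta'\dvg{c}$ (from \eqref{eq: int_wrt_W}), $\theta' m$ (from the $\ud W$-to-$\ud Z$ conversion), and $\theta'\mu$ (from substituting the SDE for $\ze^T$) must be combined using $\mu = c\nabla p/p + \dvg{c} - m$ to produce the correct net drift $-\theta'c\nabla p/p - \nabla\cdot(c\theta)$, and the Itô correction at the end must land exactly right. A secondary point is that $W^T$ as defined is a $\bG^T$-Brownian motion by Lévy's characterization (since $\sigma^{-1}$ applied to the martingale part of $\ze^T$ has quadratic variation $t\,I_d$) and is independent of $B^T$ because $W \indep B$ under $\prob$ carries through the time-reversal.
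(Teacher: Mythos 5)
Your proposal is correct and follows essentially the same route as the paper's proof: it writes $\rho^T_t = R_T - R_{T-t}$, converts $\theta'\sigma\,\ud W$ to $\theta'\,\ud Z - \theta'm\,\ud t$, time-reverses the $\ud B$- and $\ud Z$-integrals via Lemma \ref{lem: int_time_rev}, substitutes the reversed SDE for $\ze^T$ with drift $\mu = c\nabla p/p + \dvg{c} - m$ from Proposition \ref{prop: zeta_t_dynamics}, and finishes with It\^{o}'s formula on $\Delta^T = e^{-\rho^T}$. The drift bookkeeping you flag as the ``main obstacle'' (cancellation of $\theta'\dvg{c}$ and $\theta'm$, leaving $-\theta'c\nabla p/p - \nabla\cdot(c\theta)$, and then the It\^{o} half-correction cancelling $\tfrac12(\theta'c\theta + |\eta|^2)$) comes out exactly as in the paper.
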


\begin{proof}
Define $(\rho^T_t)_{t \in [0, T]}$ by $\rho^T_t \dfn R_T - R_{T - t}$, for $t
\in [0, T]$. In view of \eqref{eq: Z_defn}, \eqref{eq: R_defn}, \eqref{eq: mu_def} and Lemma \ref{lem: int_time_rev},
\begin{align*}
\rho^T &= \int_{T - \cdot}^T \pare{a + \frac{1}{2}(\theta'c\theta + \eta'\eta) }(Z_t) \ud t + \int_{T - \cdot}^T \pare{\eta(Z_t)'\ud B_t + \theta'\sigma(Z_t)\ud W_t}  \\
&= \int_{T - \cdot}^T \pare{a-\theta'm + \frac{1}{2}(\theta'c\theta + \eta'\eta) }(Z_t) \ud t + \int_{T - \cdot}^T \pare{\eta(Z_t)'\ud B_t + \theta'(Z_t)\ud Z_t}  \\
&= \int_0^{\cdot} \pare{a-\theta'm +\theta'\dvg{c}-\nabla\cdot(c\theta) + \frac{1}{2}(\theta'c\theta + \eta'\eta) }(\ze^T_t)
\ud t - \int_0^{\cdot} \pare{\eta(\ze^T_t)' \ud B^T_t +
  \theta'(\ze^T_t) \ud \ze^T_t},\\
&= \int_0^{\cdot} \pare{a-\theta'c\frac{\nabla p}{p} - \nabla\cdot (c\theta) +
  \frac{1}{2}(\theta'c\theta + \eta'\eta) }(\ze^T_t)
\ud t - \int_0^{\cdot} \pare{\eta(\ze^T_t)'\ud B^T_t +
  \theta'\sigma(\ze^T_t) \ud W^T_t}.
\end{align*}
The fact that $D = \exp(- R)$ gives $\Delta^T = \exp(- \rho^T)$. Then, the dynamics for $\Delta^T$ follow  from the dynamics of $\rho^T$.
\end{proof}

\begin{prop}\label{prop: zeta_chi_t_dynamics}
Let Assumptions \ref{ass: standing} hold.  Then, for each $T>0$ there is a filtration $\bG^T$ satisfying the usual conditions and $d$ and $k$ dimensional independent $(\prob,\bG^T)$ Brownian motions $W^T,B^T$ on $[0,T]$ so that the pair $(\ze^T,\chi^T)$ have dynamics
\begin{equation}\label{eq: ze_chi_dyn}
\begin{split}
\ze^T_t & = \ze^T_0 + \int_0^T\left(c\frac{\nabla p}{p} + \dvg{c} - m\right)(\ze^T_u)\ud u + \int_0^T\sigma(\ze^T_u)\ud W^T_u,\\
\chi^T_t& = \chi^T_0 + \int_0^T \pare{f(\ze^T_u)
  -\chi^T_u\pare{a-\theta'c\frac{\nabla p}{p} - \nabla\cdot(c\theta)}(\ze^T_u)}\ud u\\
&\qquad\qquad  + \int_0^T \chi^T_u\pare{\theta'\sigma(\ze^T_u)\ud W^T_u+ \eta(\ze^T_u)'\ud B^T_u}.
\end{split}
\end{equation}
\end{prop}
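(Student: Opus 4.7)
The plan is to decouple the two components of $(\ze^T, \chi^T)$: first get a Brownian decomposition of $\ze^T$, then use the multiplicative representation \eqref{eq: chi_from_delta} together with Lemma \ref{lem: DeltaT_dyn} to read off the dynamics of $\chi^T$. The filtration $\bG^T$ and the Brownian motion $B^T_t = B_{T-t} - B_T$ are already in place from the discussion preceding Lemma \ref{lem: int_time_rev}; only $W^T$ needs to be constructed. By Proposition \ref{prop: zeta_t_dynamics}, the law of $\ze^T$ solves the martingale problem for $L^\ze$, so
\[
M \dfn \ze^T - \ze^T_0 - \int_0^\cdot \mu(\ze^T_u) \ud u
\]
is a continuous $\bG^T$-martingale with $[M,M]_t = \int_0^t c(\ze^T_u) \ud u$. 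Since $c$ takes values in $\spos^d$ and $\sigma = \sqrt{c}$ is invertible on $E$, I would define $W^T \dfn \int_0^\cdot \sigma(\ze^T_u)^{-1} \ud M_u$; Levy's characterization then gives that $W^T$ is a $\bG^T$-Brownian motion and $\ze^T$ admits precisely the SDE displayed in \eqref{eq: ze_chi_dyn}.

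The next step is to verify that $(W^T, B^T)$ is a $(d+k)$-dimensional $\bG^T$-Brownian motion, which by Levy's theorem reduces to $[W^T, B^T] \equiv 0$. Because $W^T$ is defined from $M$ by a bounded variation transform of $\sigma(\ze^T)^{-1}$, it is equivalent to check $[M, B^T] \equiv 0$. This cross variation can be computed via Riemann sums on the time grid $u^n_i$ of \eqref{eq: uni_def}: the increments of $M$ are increments of $\ze^T$ up to a finite variation correction, and the increments of $B^T$ are reversed increments of $B$; since $B$ is independent of $(Z, W)$ under $\prob$, a monotone class / approximation argument in the spirit of the proof of Lemma \ref{lem: int_time_rev} shows the limiting cross variation vanishes. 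One can alternatively appeal directly to \eqref{eq: int_wrt_B} and \eqref{eq: int_wrt_W} to exchange backward stochastic integrals and conclude.

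With both Brownian motions in hand, the $\chi^T$ dynamics follow by a short integration by parts. Set
\[
Y_t \dfn \chi^T_0 + \int_0^t \frac{f(\ze^T_u)}{\Delta^T_u} \ud u,
\]
so that \eqref{eq: chi_from_delta} reads $\chi^T_t = \Delta^T_t Y_t$. Since $Y$ has finite variation,
\[
\ud \chi^T_t = Y_t \, \ud \Delta^T_t + \Delta^T_t \, \ud Y_t = \frac{\chi^T_t}{\Delta^T_t} \ud \Delta^T_t + f(\ze^T_t) \ud t.
\]
Substituting $\ud \Delta^T_t / \Delta^T_t$ from Lemma \ref{lem: DeltaT_dyn} and collecting terms yields exactly the drift $f(\ze^T_t) - \chi^T_t (a - \theta' c \nabla p / p - \nabla \cdot (c\theta))(\ze^T_t)$ and diffusion $\chi^T_t (\theta'\sigma(\ze^T_t)\ud W^T_t + \eta(\ze^T_t)' \ud B^T_t)$ appearing in \eqref{eq: ze_chi_dyn}.

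I expect the main obstacle to be step two: the independence of $W^T$ and $B^T$ in the filtration $\bG^T$. The filtration $\bG^T$ simultaneously encodes the reversed path of $Z$ and the reversed increments of both Brownian motions, and $W^T$ is not defined by an explicit pathwise formula but through a martingale representation. Pinning down that $[W^T, B^T]$ vanishes despite this entanglement — by reducing it to a statement about vanishing cross variations of $Z$-increments and $B$-increments in the original (forward) filtration, where the independence of $B$ from $(W, Z)$ under $\prob$ is available — is the delicate point; the remainder is essentially bookkeeping via Ito's product rule.
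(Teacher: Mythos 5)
Your proposal is correct and takes essentially the same approach as the paper: construct $W^T$ from the martingale part of $\ze^T$ identified in Proposition \ref{prop: zeta_t_dynamics}, then obtain the $\chi^T$ dynamics via the integration-by-parts identity $\chi^T = \Delta^T Y$ with $Y$ of finite variation, substituting $\ud\Delta^T/\Delta^T$ from Lemma \ref{lem: DeltaT_dyn}. Your explicit verification that $[W^T, B^T]\equiv 0$ — reduced through $M$ to the vanishing of $[Z,B]$ under $\prob$ via the Riemann-sum argument of Lemma \ref{lem: int_time_rev} — spells out a point the paper's proof leaves implicit, but the underlying argument and the overall route are the same.
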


\begin{proof}[Proof of Proposition \ref{prop: zeta_chi_t_dynamics}]
Proposition \ref{prop: zeta_t_dynamics} immediately implies that under
$\prob$, $\ze^T$ has dynamics:
\begin{equation}\label{eq: zeta_dynamics}
\begin{split}
\ze^T_t &=\ze^T_0 + \int_0^t \left(c\frac{\nabla p}{p} +\dvg{c} - m\right)(\ze^T_u)du + \int_0^t
\sigma(\ze^T_u)dW^T_u;\qquad t\in [0,T],
\end{split}
\end{equation}
where $(W^T_t)_{t \in [0, T]}$ is a Brownian motion on $(\Omega, \bG^T,
\prob)$. In order to
specify the dynamics for $\chi^T$, recall the definition of $\Delta^T$ from
\eqref{eq: Delta_def}. Observe that
\[
X_{T - t} = \frac{1}{D_{T - t}} \int_{T - t}^\infty D_u f(Z_u) \ud u =
\frac{D_T}{D_{T - t}} \pare{X_T + \int_{T - t}^T \frac{D_u}{D_T} f(Z_u) \ud
  u};\qquad t \in [0, T].
\]
Then, using the definitions of $\chi^T$, $\ze^T$ and $\Delta^T$, the above is rewritten as
\begin{equation} \label{eq: chiT_closed_form}
\chi^T_{t} = \Delta_t^T \pare{\chi^T_0 + \int_0^t \frac{1}{\Delta^T_u} f(\ze^T_u) \ud u}; \qquad t \in [0, T].
\end{equation}
Lemma \ref{lem: DeltaT_dyn} implies $\Delta^T$ is a
semimartingale, and hence \eqref{eq: chiT_closed_form} yields
\[
\chi^T_{t} = \chi^T_0 + \int_0^t \chi^T_{u} \, \frac{\ud \Delta_u^T} {\Delta_u^T} + \int_0^t f(\ze^T_u) \ud u; \qquad t \in [0, T].
\]
The result now follows by plugging in for $\ud \Delta^T_u / \Delta^T_u$ from
\eqref{eq: Delta_dyn}.
\end{proof}

\section{Proof of Theorem \ref{thm:main}}\label{sec: ze_chi_ergodic}

\subsection{Preliminaries}\label{subsec: ergodic_prelim} We first prove two technical results. The first asserts the existence of a probability space and stationary processes $(\ze,\chi)$ consistent with $(\ze,\chi^x)$ in Theorem \ref{thm:main} in that given $\chi_0 = x$, it holds that $\chi_t = \chi^{x}_t, t\geq 0$. The second proposition shows that under the non-degeneracy assumption $|\eta|(z) > 0, z\in E$ and regularity assumption $f\in C^2(E;\Real_+)$ it follows that $(\ze,\chi)$ is ergodic.

\begin{lem}\label{lem: horizon_T_not_nec}
Let Assumption \ref{ass: standing} hold. Then, there is a filtered probability space $\left(\Omega,\bF,\qprob\right)$, supporting independent $d$ and $k$ dimensional Brownian motions $W$ and $B$, $\F_0$ measurable random variables $\ze_0,\chi_0$ with joint distribution $\pi$, as well as a stationary process $\ze$ with dynamics
\begin{equation}\label{eq: ze_dyn_noT}
\ze = \ze_0 + \int_0^\cdot \left(c\frac{\nabla p}{p} + \dvg{c} - m\right)(\ze_t)\ud t + \int_0^\cdot \sigma(\ze_t)\ud W_t.
\end{equation}
Furthermore, with $\Delta,\chi^x$ defined as in \eqref{eq: Delta_no_T_def}, \eqref{eq: chi_no_T_def}, if the process $\chi$ is defined by $\chi_t \dfn \chi^{\chi_0}_t$ (see Remark \ref{rem: chi_via_zeta}) then $(\ze,\chi)$ are stationary with invariant measure $\pi$ and joint dynamics
\begin{equation}\label{eq: ze_chi_dyn_noT}
\begin{split}
\ud \ze_t & = \left(c\frac{\nabla p}{p} + \dvg{c} - m \right)(\ze_t)\ud t + \sigma(\ze_t)\ud W_t, \quad \tir, \\
\ud \chi_t& = \pare{f(\ze_t) -\chi_t\pare{a-\theta'c\frac{\nabla p}{p} - \nabla\cdot(c\theta)}(\ze_t)}\ud t + \chi_t\left(\theta'\sigma(\ze_t)\ud W_t + \eta(\ze_t)'\ud B_t\right), \quad \tir.
\end{split}
\end{equation}
\end{lem}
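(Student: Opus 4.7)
The plan is to build the filtered probability space directly from the SDE \eqref{eq: ze_dyn_noT} and then identify the joint law of $(\ze,\chi)$ with the time-reversed process from Section \ref{sec: tr} in order to obtain stationarity with invariant measure $\pi$.

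First, I would invoke Proposition \ref{prop: zeta_t_dynamics}: the martingale problem for $L^{\ze}$ on $E$ is well-posed and the corresponding diffusion is positive recurrent with invariant density $p$. This produces a filtered probability space carrying a $d$-dimensional Brownian motion $W$ and an $\F_0$-measurable $\ze_0$ with law $p$, on which the unique solution $\ze$ of \eqref{eq: ze_dyn_noT} is stationary. Next, enlarge this space by a product construction so that it supports an independent $k$-dimensional Brownian motion $B$ (independent of $(\ze_0,W)$) and, independently, a uniform $[0,1]$-random variable $U$. Using $U$ and the regular conditional distribution $g(\ze_0,\cdot)$ of $X_0$ given $Z_0=\ze_0$, define $\chi_0$ via the generalised inverse of $U$, so that $(\ze_0,\chi_0)$ has joint law $\pi$ while $(W,B)$ remains independent of $(\ze_0,\chi_0)$. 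Call the resulting space $(\Omega,\bF,\qprob)$.

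Second, I would define $\Delta$ by the closed-form stochastic exponential in Remark \ref{rem: chi_via_zeta} (which solves \eqref{eq: Delta_no_T_def}) and set $\chi:=\chi^{\chi_0}$ via \eqref{eq: chi_no_T_def}. The joint dynamics \eqref{eq: ze_chi_dyn_noT} then follow routinely from It\^o's product rule: differentiating $\chi_t=\Delta_t\bigl(\chi_0+\int_0^t f(\ze_s)/\Delta_s\,\ud s\bigr)$ gives $\ud\chi_t=\chi_t\,\ud\Delta_t/\Delta_t+f(\ze_t)\,\ud t$, and substituting \eqref{eq: Delta_no_T_def} produces the second equation of \eqref{eq: ze_chi_dyn_noT}.

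Third, for stationarity and the invariant-measure identification, I would fix $T>0$ and compare $(\ze,\chi)$ on $[0,T]$ with the time-reversed process $(\ze^T,\chi^T)$ of Section \ref{sec: tr}. By Proposition \ref{prop: zeta_chi_t_dynamics}, the pair $(\ze^T,\chi^T)$ satisfies the same SDE \eqref{eq: ze_chi_dyn_noT} on $[0,T]$, driven by independent Brownian motions $W^T,B^T$ on $(\Omega,\bG^T,\prob)$, with initial condition $(\ze^T_0,\chi^T_0)=(Z_T,X_T)\sim\pi$ by stationarity of $(Z,X)$ under $\prob$. Uniqueness in law of the joint SDE (the $\ze$-component from well-posedness of the martingale problem for $L^{\ze}$; the $\chi$-component from the explicit pathwise formula given $(\ze,W,B)$) then identifies the laws of $(\ze,\chi)|_{[0,T]}$ and $(\ze^T,\chi^T)|_{[0,T]}$. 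Since $(\ze^T_t,\chi^T_t)\sim\pi$ for all $t\in[0,T]$, the same holds for $(\ze_t,\chi_t)$; letting $T\uparrow\infty$ shows $(\ze_t,\chi_t)\sim\pi$ for every $\tir$, and the Markovian structure of \eqref{eq: ze_chi_dyn_noT} upgrades this to full stationarity. The main technical point will be the joint uniqueness-in-law step, since $B$ drives only $\chi$ and is independent of $\ze$; this is resolved by noting that the closed-form expressions in Remark \ref{rem: chi_via_zeta} exhibit $\chi$ as a measurable functional of $(\chi_0,\ze,W,B)$, so joint uniqueness in law reduces to uniqueness for $\ze$ together with the specification of the law of $(\chi_0,B)$ relative to it.
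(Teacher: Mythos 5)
Your proposal is correct and follows essentially the same route as the paper's proof: invoke Proposition \ref{prop: zeta_t_dynamics} to obtain the stationary $\ze$, place an independent $B$ and a $\pi$-distributed $(\ze_0,\chi_0)$ on the space, derive \eqref{eq: ze_chi_dyn_noT} by It\^o's product rule from the closed form of $\chi$, and then match the law of $(\ze,\chi)$ against that of $(\ze^T,\chi^T)$ from Proposition \ref{prop: zeta_chi_t_dynamics} to transfer the invariance of $\pi$. The only difference is cosmetic: you spell out the construction of $\chi_0$ via a uniform random variable and the conditional cdf $g$, and you make explicit the uniqueness-in-law reasoning (reduce to uniqueness for $\ze$ plus the pathwise functional form of $\chi$), both of which the paper leaves implicit.
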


\begin{proof}
This result follows from Proposition \ref{prop: zeta_t_dynamics}. Indeed, one can start with a probability space $\left(\Omega,\bF,\qprob\right)$ supporting independent $d$ and $k$ dimensional Brownian motions $W$ and $B$ respectively, as well as a $\F_0$ measurable random variable $(\ze_0,\chi_0)\sim \pi$ (hence independent of $W$ and $B$).  Under the given regularity assumptions, Proposition \ref{prop: zeta_t_dynamics} yields a strong, stationary solution $\ze$ satisfying \eqref{eq: ze_dyn_noT}. Then, defining $\Delta$ as in \eqref{eq: Delta_def} and, for $x>0$, $\chi^x$ as in \eqref{eq: chi_no_T_def}, it follows that $(\ze,\chi^x)$ and hence $(\ze,\chi)$ satisfy the SDE in \eqref{eq: ze_chi_dyn_noT}. Under the given regularity assumptions the law under $\prob$ of $(\ze^T,\chi^T)$ given $\ze^T_0 = z, \chi^T_0 = x$ coincides with the law under $\qprob$ of $(\ze,\chi^x)$ given that $\ze_0 = z$.  Since by construction, $\pi$ is an invariant measure for $(\ze^T,\chi^T)$, it follows from the Markov property that $\pi$ is invariant for $(\ze,\chi)$ under $\qprob$ and hence $(\ze,\chi)$ is stationary with invariant measure $\pi$.
\end{proof}

Define the measures $\qprob^{z,x}$ for $(z,x)\in F$ via
\begin{equation}\label{eq: probtx_defn}
\qprob^{z,x}\bra{A} = \qprob\bra{A\such \ze_0 = z,\chi_0 = x};\qquad A\in\F_{\infty}
\end{equation}

We now consider when $|\eta| > 0$ on $E$ and $f\in C^2(E;\Real_+)$. According to Theorem \ref{thm: pde_soln}, $g\in C^{2,\gamma}(F)$ and hence $\pi$ possesses a density satisfying
\begin{equation}\label{eq: pi_density_val}
\pi(z,x) = p(z)\partial_x g(z,x);\qquad (z,x)\in F.
\end{equation}
Additionally, we have the following Proposition:

\begin{prop}\label{prop: backwards_ergodicity}
Let Assumption \ref{ass: standing} hold, and additionally suppose that $|\eta|(z) > 0$ for $z\in E$ and $f\in C^2(E;\Real_+)$. Then the process $(\ze,\chi)$ from Lemma \ref{lem: horizon_T_not_nec} is ergodic. Thus, for all bounded measurable functions $h$ on $F$ and all $(z,x)\in F$
\begin{equation}\label{eq: p-ergodic_AA}
\lim_{T\to\infty}\frac{1}{T}\int_0^T h(\ze_t,\chi_t)dt = \int_F hd\pi;\quad \qprob^{z,x} \textrm{a.s.}.
\end{equation}
\end{prop}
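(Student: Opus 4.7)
The plan is to identify $(\ze,\chi)$ as a non-explosive, locally elliptic diffusion on $F = E\times(0,\infty)$ that admits $\pi$ as an invariant probability measure with strictly positive smooth density, and then quote the standard ergodicity result for such diffusions from \cite[Chapter 4]{MR1326606}.

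First, I would read off the joint generator $\widehat{L}$ of $(\ze,\chi)$ from \eqref{eq: ze_chi_dyn_noT}. The diffusion matrix of $(\ze,\chi)$ is
\[
\widehat{A}(z,x) \;=\; \begin{pmatrix} c(z) & x c\theta(z) \\ x \theta'(z)c(z) & x^2\bigl(\theta'c\theta + |\eta|^2\bigr)(z)\end{pmatrix}, \qquad (z,x)\in F.
\]
Using a Schur complement computation one gets $\det\widehat{A}(z,x) = \det(c(z)) \cdot x^2 |\eta(z)|^2$, which is strictly positive on $F$ by (A2) and the hypotheses $|\eta|>0$, $x>0$. Combined with the $C^{2,\gamma}$ regularity of $c$, $\theta$, $\eta$, $a$, $m$, $p$ (from Assumption \ref{ass: standing}) and of $f$ (from $f\in C^2$), this shows $\widehat{L}$ is a locally elliptic operator on $F$ with $C^{1,\gamma}$ coefficients, so the martingale problem for $\widehat{L}$ on $F$ is well posed.

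Next I would verify non-explosion of $(\ze,\chi)$ on $F$. The $\ze$-component does not explode in $E$: indeed, Proposition \ref{prop: zeta_t_dynamics} shows that $\ze$ is positive recurrent on $E$ under $\qprob^{z,x}$. Given a path of $\ze$ that stays in a compact subset of $E$, the $\chi$-component solves a linear SDE with locally bounded coefficients, so it can neither explode to $+\infty$ nor hit $0$ in finite time (the comparison $\chi = \chi_0\exp(\int \cdots) + \int (\cdots) > 0$ from Remark \ref{rem: chi_via_zeta} makes this explicit, and also rules out explosion via the sign of $f\geq 0$ and finiteness of the stochastic exponential up to a deterministic time). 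A standard localization argument using an exhausting sequence $F_n$ of smooth relatively compact subdomains then yields non-explosion on $F$.

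Now I would invoke Lemma \ref{lem: horizon_T_not_nec} to conclude that $\pi$ is an invariant probability measure for $(\ze,\chi)$, and use Theorem \ref{thm: pde_soln} together with the fact that $p>0$ on $E$ to show that $\pi$ has a density $\pi(z,x) = p(z)\partial_x g(z,x)$ which lies in $C^{2,\gamma}(F)$ and is strictly positive throughout $F$ (strict positivity of $\partial_x g$ on all of $F$ follows from the Harnack inequality applied to the $C^{2,\gamma}$ strictly positive solution of $Lg=0$ built in the proof of Theorem \ref{thm: pde_soln}, since differentiating formally in $x$ gives that $\partial_x g$ is a nonnegative solution of a closely related elliptic equation; alternatively one can deduce positivity directly from the explicit form of the conditional distribution and the support result for $(Z_0,X_0)$).

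The conclusion then follows from \cite[Theorems 4.3.3, 4.9.5]{MR1326606}: a locally elliptic, non-explosive diffusion on a domain that admits a finite invariant measure is positive recurrent, $\pi$ is its unique invariant probability measure, and the ergodic theorem holds in the strong form that for every $\qprob^{z,x}$ with $(z,x)\in F$ and every bounded measurable $h$ on $F$, the time averages $T^{-1}\int_0^T h(\ze_t,\chi_t)\,\ud t$ converge $\qprob^{z,x}$-almost surely to $\int_F h\,\ud\pi$, giving \eqref{eq: p-ergodic_AA}.

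The main technical obstacle I anticipate is the verification of non-explosion at the $x=0$ and $x=\infty$ boundaries of the $\chi$-coordinate simultaneously for all starting points $(z,x)$, since the drift and diffusion in \eqref{eq: ze_chi_dyn_noT} are only linearly bounded in $\chi$ and involve unbounded coefficients in $\ze$. This is what forces the localization via $F_n$ together with the positive recurrence of $\ze$ from Proposition \ref{prop: zeta_t_dynamics}, rather than a one-shot global estimate.
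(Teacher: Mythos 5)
Your overall route — identify $(\ze,\chi)$ as a locally elliptic, non-explosive diffusion on $F$ admitting a smooth, integrable invariant density, then quote the positive-recurrence and ergodic-theorem machinery from \cite{MR1326606} — is the same as the paper's. The meaningful difference is how you establish that $\pi$ is an invariant density in the sense that Pinsky's framework actually requires. You appeal to Lemma \ref{lem: horizon_T_not_nec}, which gives stationarity of the process $(\ze,\chi)$ under $\qprob$ when started from $\pi$, and then implicitly treat this as equivalent to the PDE invariance $\tilde{L}^R\pi = 0$ needed for the positive-recurrence criterion. The paper does not rely on Lemma \ref{lem: horizon_T_not_nec} here; it instead verifies $\tilde{L}^R\pi = 0$ directly by the explicit computation in Lemma \ref{lem: tilde_LR_calc}, which reduces the adjoint equation for $L^R$ to $\partial_x(L\psi)$ with $\psi(z,x)=\int_0^x g(z,y)\,\ud y$ and then invokes $Lg=0$ from Theorem \ref{thm: pde_soln}. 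Your shortcut is defensible (for a locally elliptic diffusion, a smooth invariant density for the semigroup does satisfy the adjoint equation), but it glosses over the translation between process-level stationarity on a particular probability space and the analytic notion of invariant measure for the generalized martingale problem, which is exactly the content Lemma \ref{lem: tilde_LR_calc} supplies.

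There is also a small regularity gap you should fix. You assert that $\pi(z,x)=p(z)\,\partial_x g(z,x)\in C^{2,\gamma}(F)$ follows from Theorem \ref{thm: pde_soln}. But that theorem only gives $g\in C^{2,\gamma}(F)$, so $\partial_x g$ is a priori only $C^{1,\gamma}$. This is why the paper explicitly bootstraps to $g\in C^3(F)$ via interior Schauder estimates (citing \cite[Ch.\ 6]{MR1814364}), which is where the strengthened hypothesis $f\in C^2(E;\Real_+)$ enters — it is more than the $f\in C^{1,\gamma}$ needed in Theorem \ref{thm: pde_soln}. Your non-explosion argument is essentially the paper's (positive recurrence of $\ze$ from Proposition \ref{prop: zeta_t_dynamics}, $\chi>0$ from the closed form, linear growth precluding escape to $+\infty$); note, however, that positive recurrence does not keep the $\ze$-path in a fixed compact set for all time, only for each finite horizon, so the localization should be phrased over $[0,T]$ rather than "a path that stays in a compact subset." Finally, your remark about strict positivity of $\pi$ via Harnack applied to $\partial_x g$ is not needed for the paper's argument and is extra work; Pinsky's recurrence/ergodicity apparatus does not require you to check positivity of the invariant density separately once you have the adjoint equation and integrability.
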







\begin{proof}[Proof of Proposition \ref{prop: backwards_ergodicity}]

Recall $A$ from \eqref{eq: bA_def} and define $b^R: F\mapsto\Real^{d+1}$ by
\begin{equation}\label{eq: b_R_def}
\begin{split}
b^R(z,x) \dfn \left(\begin{array}{c} \pare{c (\nabla p / p) + \dvg{c} - m} (z) \\ f(z) -
    x\left(a - \theta'c\pare{\nabla p/p} - \nabla\cdot(c\theta)\right)(z)\end{array}\right).
\end{split}
\end{equation}
From \eqref{eq: ze_chi_dyn_noT} it is clear that the generator for $(\ze,\chi)$ is $L^R \dfn (1/2)A^{ij}\partial^2_{ij} + (b^R)^i \partial_i$. As an abuse of notation, let $(\qprob^{z,x})_{(z,x)\in F}$ also denote the solution to the generalized martingale problem for $L^R$ on $F$. Using Theorem \ref{thm: pde_soln}, and the fact that under the given coefficient regularity assumptions, $g\in C^3(F)$ (see \cite[Ch. 6]{MR1814364}) a lengthy calculation performed in Lemma \ref{lem: tilde_LR_calc} below shows that the density $\pi$ from \eqref{eq: pi_density_val} solves $\tilde{L}^R \pi = 0$ where $\tilde{L}^R$ is the formal adjoint to $L$.  Since by construction, $\iint_{F}\pi(z,x)dz dx = 1$, positive recurrence will follow once it is shown that $(\qprob^{z,x})_{(z,x)\in F}$ is recurrent. By Proposition \ref{prop: zeta_t_dynamics}, the restriction of $\qprob^{z,x}$ to the first $d$ coordinates (i.e. the part for $\ze$) is positive recurrent. Since by \eqref{eq: chi_no_T_def} it is evident that $\chi$ does not hit $0$ in finite time, it follows that
that $\chi$ does not explode under $\qprob^{z,x}$. Thus, \cite[Corollary 4.9.4]{MR1326606} shows that $(\ze,\chi)$ is recurrent. Now, that \eqref{eq: p-ergodic_AA} holds follows from \cite[Theorem 4.9.5]{MR1326606}.

\end{proof}

\subsection{Proof of Theorem \ref{thm:main}}

The proof of Theorem \ref{thm:main} uses a number of approximations arguments. To make these arguments precise, we first enlarge the original probability space $\basisp$ so that it contains a one dimensional Brownian motion $\hat{B}$ which is independent of $Z_0,W$ and $B$. Let $D$ be as in \eqref{eq: D_defn}, and for $\eps > 0$, define $D^{\eps}\dfn D \mathcal{E}(\sqrt{\eps}\hat{B})$. Similarly to \eqref{eq: X_def} define
\begin{equation}\label{eq: X_eps_def}
X_0^\eps \dfn \int_0^\infty D^{\eps}_t f(Z_t)dt.
\end{equation}
Note that $D^{\eps}$ takes the form \eqref{eq: D_defn} for $\eta^{\eps}(z) = (\eta(z),\sqrt{\eps})$ and when the Brownian motion $B$ therein is the $k+1$ dimensional Brownian motion $(B,\hat{B})$.  Note that  $|\eta^{\eps}|^2 = |\eta|^2 + \eps > 0$.  Denote by $\pi^{\eps}$ the joint distribution of $(Z_0,X_0^{\eps})$ under $\prob$ and by $g^\eps$ the conditional cdf of $X_0^{\eps}$ given $Z_0 = z$.  By Theorem \ref{thm: pde_soln} it follows that $g^{\eps}\in C^{2,\gamma}(F)$ and hence $\pi^{\eps}$ admits a density.

In a similar manner, by enlarging the probability space $(\Omega,\bF,\qprob)$ of Lemma \ref{lem: horizon_T_not_nec} to include a Brownian motion (still labeled $\hat{B}$) which is independent of $\ze_0$, $\chi_0$, $W$ and $B$ and defining the family of processes $(\Delta^{\eps})_{\eps > 0}$ and $(\chi^{\eps,x})_{\eps > 0}$ for $x>0$ according to
\begin{equation}\label{eq: delta_chi_new_eps}
\begin{split}
\Delta^{\eps}_t &\dfn \Delta_t\mathcal{E}(\sqrt{\eps}\hat{B})_t;\qquad t\geq 0\\
\chi^{\eps,x}_{t} &\dfn \Delta^\eps_t \pare{x + \int_0^t \frac{1}{\Delta^\eps_u} f(\ze_u) \ud u}; \qquad t\geq 0,
\end{split}
\end{equation}
it follows that $(\ze,\chi^{x,\eps})$ solve the SDE
\begin{equation}\label{eq: ze_chi_dyn_noT_eps}
\begin{split}
d\ze_t & = \left(m+2\xi\right)(\ze_t)\ud t + \sigma(\ze_t)\ud W_t,\\
d\chi^{\eps,x}_t& = \pare{f(\ze_t) -\chi^\eps_t\pare{a-\theta'c\frac{\nabla p}{p} - \nabla\cdot (c\theta)}(\ze_t)}\ud t + \chi^\eps_t\pare{\theta'\sigma(\ze_t)\ud W_u + \eta^{\eps}(\ze_t)'(\ud B_t,\ud \hat{B}_t)}.
\end{split}
\end{equation}
Since $|\eta^\eps|\geq \sqrt{\eps} >0$, Proposition \ref{prop: backwards_ergodicity} shows for $f\in C^2(E;\Real_+)$ the generator $L^{\eps,R}$ associated to \eqref{eq: ze_chi_dyn_noT_eps} is positive recurrent with invariant density $\pi^{\eps}$ and thus for all $(z,x)\in F$ and all bounded measurable functions $h$ on $F$ (note that conditioned upon $\chi_0 = x$ we have $\chi^{\eps,x}_0=\chi^{x}_0 = x = \chi_0$):
\begin{equation}\label{eq: p-ergodic_AA_eps}
\lim_{T\to\infty}\frac{1}{T}\int_0^T h(\ze_t,\chi^{x,\eps}_t)dt = \int_F hd\pi^{\eps};\qquad \qprob^{z,x} a.s..
\end{equation}



With all the notation in place, Theorem \ref{thm:main} is the culmination of a number of lemmas, which are now presented. The first lemma implies that $\pi^{\eps}$ converges weakly to $\pi$ as $\eps\downarrow 0$.

\begin{lem}\label{lem: pi_eps_to_pi}

Let Assumption \ref{ass: standing} hold. Define $X_0^{\eps}$ as in \eqref{eq: X_eps_def}. Then $X_0^\eps$ converges to $X$ in $\prob$-measure as $\eps\to 0$.

\end{lem}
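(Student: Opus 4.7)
The plan is to split $X_0^\eps - X_0$ into a bulk part on $[0,T]$ and a tail part on $[T,\infty)$ and control these separately. Writing $M^\eps_t \dfn \mathcal{E}(\sqrt{\eps}\hat{B})_t$, for each $T>0$ set
\begin{equation*}
I_1^{\eps,T} \dfn \int_0^T D_t f(Z_t)(M^\eps_t - 1)\,\ud t, \quad J_T \dfn \int_T^\infty D_t f(Z_t)\,\ud t, \quad K_T^\eps \dfn \int_T^\infty D_t f(Z_t) M^\eps_t\,\ud t,
\end{equation*}
so that $|X_0^\eps - X_0| \leq |I_1^{\eps,T}| + J_T + K_T^\eps$ whenever $K_T^\eps < \infty$; the argument below shows in particular that $\prob[K_T^\eps = \infty] = 0$.

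For the bulk term, the deterministic estimate
\begin{equation*}
|I_1^{\eps,T}| \leq \pare{\sup_{t \in [0,T]}|M^\eps_t - 1|} \int_0^T D_t f(Z_t)\,\ud t,
\end{equation*}
combined with the continuity of $\hat{B}$---giving $\sup_{t \in [0,T]}|M^\eps_t - 1| \to 0$ $\prob$-a.s.\ as $\eps \downarrow 0$ for each fixed $T$---and the $\prob$-a.s.\ finiteness of $\int_0^T D_t f(Z_t)\,\ud t \leq X_0$ from Lemma \ref{lem: X_finite_A} imply that $I_1^{\eps,T} \to 0$ in $\prob$-measure as $\eps \downarrow 0$ for each fixed $T$.

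For the tail, $X_0 < \infty$ $\prob$-a.s.\ yields $J_T \to 0$ $\prob$-a.s.\ as $T \to \infty$. The key is to control $K_T^\eps$ uniformly in $\eps$; I would do so by conditioning on $\G \dfn \sigma(Z,W,B)$. Under this conditioning $D$ and $Z$ are determined and $\hat{B}$ is still a Brownian motion independent of $\G$, so $\expec[M^\eps_t \mid \G] = 1$ for every $t$, and Tonelli gives $\expec[K_T^\eps \mid \G] = J_T$. A conditional Markov estimate then delivers
\begin{equation*}
\prob[K_T^\eps > \delta \mid \G] \leq 1 \wedge \frac{J_T}{\delta}, \qquad \delta>0,
\end{equation*}
and taking expectations with dominated convergence (dominating function $1$, using $J_T \to 0$ $\prob$-a.s.) shows $\prob[K_T^\eps > \delta] \leq \expec[1 \wedge (J_T/\delta)] \to 0$ as $T \to \infty$, uniformly in $\eps>0$. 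The same bound with $\delta \to \infty$ for fixed $T$ yields $\prob[K_T^\eps = \infty]=0$.

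Combining: given $\delta,\eta > 0$, first choose $T$ large enough that $\prob[J_T > \delta/3] < \eta/3$ and $\expec[1 \wedge (3J_T/\delta)] < \eta/3$; for this $T$, pick $\eps_0>0$ with $\prob[|I_1^{\eps,T}| > \delta/3] < \eta/3$ for all $\eps \in (0,\eps_0)$. Adding the three bounds yields $\prob[|X_0^\eps - X_0| > \delta] < \eta$, finishing the proof. The main obstacle is the uniform-in-$\eps$ control of $K_T^\eps$: a naive $L^1$ bound $\expec[K_T^\eps] = \int_T^\infty \expec[D_t f(Z_t)]\,\ud t$ would require $\expec[X_0]<\infty$, which Assumption \ref{ass: standing} does not guarantee, but the conditional Markov argument above circumvents this using only the $\prob$-a.s.\ finiteness of $X_0$.
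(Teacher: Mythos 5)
Your proof is correct, but it follows a genuinely different route from the one in the paper, and the difference is worth noting. The paper also conditions on $\G$ and writes $\delta^\eps_t\dfn D^\eps_t/D_t$, but then directly estimates $\expec\bra{|X_0^\eps-X_0|\such\G}\leq\int_0^\infty\expec\bra{|\delta^\eps_t-1|}D_tf(Z_t)\ud t$, replaces $\expec\bra{|\delta^\eps_t-1|}$ by its explicit $L^2$ bound $h^\eps_t=\sqrt{e^{\eps t}-1}$, and then applies dominated convergence \emph{pathwise in $\eps$} to the full integral over $(0,\infty)$; the dominating function comes from the fact that $e^{\kappa t}D_t\to0$ a.s.\ together with the auxiliary integrability $\int_0^\infty e^{-t/n}f(Z_t)\ud t<\infty$ established in the proof of Lemma~\ref{lem: X_finite_A}. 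That single-step DCT argument is shorter, and the function $h^\eps$ it produces is reused later in the proof of Lemma~\ref{lem: deg_conv_in_prob}. Your bulk-plus-tail decomposition avoids having to exhibit a dominating function over the whole half-line: on $[0,T]$ you use the trivial sup-norm bound on $M^\eps-1$, and on $[T,\infty)$ you exploit $\expec\bra{M^\eps_t\such\G}=1$ to get $\expec\bra{K^\eps_T\such\G}=J_T$ and then a conditional Markov bound that is manifestly uniform in $\eps$. Both arguments rest on the same two inputs---independence of $\hat B$ from $\G$ and $\prob\bra{X_0<\infty}=1$---and neither requires $\expec\bra{X_0}<\infty$; yours spells out explicitly why that stronger hypothesis is unnecessary, whereas the paper's hides it inside the pathwise DCT step. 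Your argument is somewhat longer but more self-contained.
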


\begin{proof}[Proof of Lemma \ref{lem: pi_eps_to_pi}]

Denote by $\G$ the sigma-field generated by $Z_0$, $W$ and $B$. Set $\delta^{\eps}_t \dfn  D^\eps_t/D_t = \mathcal{E}\left(\sqrt{\eps}\hat{B}_t\right)$. By the independence of $\delta^\eps$ and $\G$:
\[
\expec \bra{|X_0^{\epsilon} - X_0| \such \G} \leq \int_0^\infty \expec \bra{|\delta^{\epsilon}_t - 1| \such \G}D_t f(Z_t) \ud t = \int_0^\infty \expec \bra{|\delta^{\epsilon}_t - 1|} D_t f(Z_t) \ud t.
\]
Now, set $h^\eps_t\dfn \sqrt{e^{\eps t} -1}$. Note that $h^{\eps}$ is monotone increasing in $\eps$ with $\lim_{\eps \to 0} h^{\eps} = 0$. Furthermore,
\[
\expec \bra{|\delta^{\eps}_t - 1|} \leq \expec \bra{|\delta^{\eps}_t - 1|^2}^{1/2} = \sqrt{\exp(\eps t) - 1} = h^{\eps}_t.
\]
By assumption, $\prob\bra{X_0<\infty} = 1$.  Since for any $\eps > 0$, $\sup_{t\geq 0} \delta^\eps_t < \infty$ $\prob$ a.s., it thus follows that $\prob\bra{X_0^\eps < \infty} = 1$. The dominated convergence theorem applied path-wise (recall that there exists a $\kappa > 0$ so that $e^{\kappa t} D_t \rightarrow 0$ $\prob$ almost surely) then gives that $\lim_{\eps \to 0} \expec \bra{|X_0^{\eps} - X_0| \such \G} = 0$, which shows that the pair $(Z_0, X_0^{\eps})$ converges in probability to $(Z_0, X_0)$, finishing the proof.
\end{proof}

Next, define $\mathcal{C}$ as the class of (Borel measurable) functions $h$ which are bounded and Lipschitz in $x$, uniformly in $z$; in other words,
\begin{equation}\label{eq: lip-est_A}
\mathcal{C} \dfn \cbra{h\in \B(E;\Real) \such \exists K(h)>0 \ s.t.\ \forall x_1,x_2 > 0,\ \sup_{z\in E} |h(z,x_1) - h(z,x_2)| \leq K(h) \left(1\wedge |x_1 - x_2|\right)}.
\end{equation}
The next Lemma gives a weak form of the convergence in Theorem \ref{thm:main} for regular $f$. Note that the notation $\qprob\text{-}\lim_{T\to\infty}$ stands for the limit in $\qprob$ probability as $T\to\infty$.

\begin{lem}\label{lem: deg_conv_in_prob}

Let Assumption \ref{ass: standing} hold. Assume additionally that $f\in C^2(E;\Real_+)$. Then for all $x > 0$ and all $h\in\mathcal{C}$:
\begin{equation}\label{eq: p-ergodic}
\qprob\text{-}\lim_{T\to\infty} \frac{1}{T}\int_0^T h(\ze_t,\chi^x_t)dt  = \int_{F}hd\pi.
\end{equation}

\end{lem}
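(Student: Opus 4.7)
The approach is to approximate the (possibly degenerate) process $(\ze,\chi^x)$ by its non-degenerate regularization $(\ze,\chi^{\eps,x})$ from \eqref{eq: delta_chi_new_eps}, apply Proposition \ref{prop: backwards_ergodicity} to the latter (legitimate since the added independent Brownian noise gives $|\eta^\eps|^2 \geq \eps > 0$ and $f\in C^2(E;\Real_+)$), and then pass $\eps\downarrow 0$. The triangle inequality gives
\begin{equation*}
\left| \frac{1}{T}\int_0^T h(\ze_t,\chi^x_t)\, dt - \int_F h\, d\pi \right| \leq A_T^\eps + B_T^\eps + C^\eps,
\end{equation*}
where $A_T^\eps \dfn \left| \frac{1}{T}\int_0^T [h(\ze_t,\chi^x_t) - h(\ze_t,\chi^{\eps,x}_t)]\, dt \right|$, $B_T^\eps$ is the deviation $\left|\frac{1}{T}\int_0^T h(\ze_t,\chi^{\eps,x}_t)\, dt - \int_F h\, d\pi^\eps\right|$, and $C^\eps \dfn \left|\int_F h\, d\pi^\eps - \int_F h\, d\pi\right|$.

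The three terms are controlled separately. Lemma \ref{lem: pi_eps_to_pi} gives $(Z_0,X_0^\eps)\to (Z_0,X_0)$ in $\prob$-probability, whence, using that $h\in\mathcal{C}$ is bounded and uniformly Lipschitz in its second argument together with bounded convergence, $C^\eps\to 0$ as $\eps\downarrow 0$. For each fixed $\eps > 0$, Proposition \ref{prop: backwards_ergodicity} is applicable to $(\ze,\chi^{\eps,x})$ (with cash flow rate $f$ and diffusion coefficient $\eta^\eps$) and yields $B_T^\eps \to 0$ $\qprob$-a.s. as $T\to\infty$.

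The main obstacle is controlling $A_T^\eps$. By Lipschitzness, $A_T^\eps \leq \frac{K(h)}{T}\int_0^T \left(1\wedge |\chi^x_t - \chi^{\eps,x}_t|\right)\, dt$. Setting $M_t \dfn \mathcal{E}(\sqrt{\eps}\hat{B})_t$ and conditioning on $\G \dfn \sigma(\ze_0, W, B)$ (which is independent of $\hat{B}$), the martingale property of $M$ combined with the closed forms of Remark \ref{rem: chi_via_zeta} and \eqref{eq: delta_chi_new_eps} produces $\expecq[\chi^{\eps,x}_t\mid\G] = \chi^x_t$, and a direct variance calculation using $\expec[(M_t/M_s)(M_t/M_u)] = e^{\eps(t - s\vee u)}$ yields
\begin{equation*}
\Var(\chi^{\eps,x}_t\mid\G) = \Delta_t^2\left[ x^2(e^{\eps t}-1) + 2x\int_0^t (e^{\eps(t-s)}-1)\frac{f(\ze_s)}{\Delta_s}\, ds + 2\int_0^t (e^{\eps(t-u)}-1) A_u \frac{f(\ze_u)}{\Delta_u}\, du \right],
\end{equation*}
with $A_u \dfn \int_0^u f(\ze_s)/\Delta_s\, ds$. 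Via Cauchy--Schwarz, $\expecq[1\wedge|\chi^{\eps,x}_t-\chi^x_t|] \leq \sqrt{\expecq\,\Var(\chi^{\eps,x}_t\mid\G)}$, which is $O(\sqrt{\eps})$ provided $(\Delta_t/\Delta_s)^2$ decays exponentially in $t-s$. This exponential decay is a consequence of Assumption (A5) via the ergodic theorem applied to the drift of $\log\Delta_t$: integration by parts against $p$ gives $\int_E \theta'c\nabla p\, dz = -\int_E \nabla\cdot(c\theta)\, p\, dz$, so the limiting per-unit-time drift of $\log \Delta_t$ equals $-\int_E[a + (1/2)(\theta'c\theta + |\eta|^2)]\, p\, dz < 0$. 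Markov's inequality then yields $\lim_{\eps\downarrow 0}\limsup_{T\to\infty}\qprob(A_T^\eps > \delta) = 0$ for every $\delta > 0$, and a standard three-epsilon argument combining this with the controls on $B_T^\eps$ and $C^\eps$ concludes the proof.
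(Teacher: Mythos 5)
Your high-level decomposition matches the paper's: regularize by the independent Brownian motion $\hat B$, use Proposition~\ref{prop: backwards_ergodicity} for the nondegenerate process $(\ze,\chi^{\eps,x})$, use Lemma~\ref{lem: pi_eps_to_pi} to send $\pi^\eps\to\pi$, and then control the error between $\chi^{\eps,x}$ and $\chi^x$. The controls of $B_T^\eps$ and $C^\eps$ are essentially what the paper does (the paper phrases the $T$-versus-$\eps$ interplay as a double-subsequence diagonal argument rather than $\lim_{\eps\downarrow 0}\limsup_T$, but this is a cosmetic difference).

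The gap is in your estimate of $A_T^\eps$. You pass via Cauchy--Schwarz to a second-moment bound, $\expecq[1\wedge|\chi^{\eps,x}_t-\chi^x_t|]\le\sqrt{\expecq\,\Var(\chi^{\eps,x}_t\mid\G)}$, and the conditional variance involves $\Delta_t^2$ and terms like $\Delta_t^2\int_0^t(\cdot)\,\tfrac{f(\ze_u)}{\Delta_u}\,\ud u$. For this bound to have content you need $\expecq[\Delta_t^2]$ and the corresponding double-integral moments to be finite (and then controlled), and Assumption~(A5) does not give you that: it is an $\Lb^1$-type condition, and the law of $\Delta_t$ under $\qprob$ is that of $D_t=\e^{-R_t}$ under $\prob$, whose second moment may be infinite (the quadratic variation in $R$ only enters with coefficient $1/2$ in the drift, so squaring doubles the stochastic exponential without doubling the compensation). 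Likewise the claim that $(\Delta_t/\Delta_s)^2$ ``decays exponentially in $t-s$'' conflates a pathwise statement and a moment statement: the ergodic theorem controls $\log(\Delta_t/\Delta_s)/(t-s)$ almost surely, not $\expecq\!\bigl[(\Delta_t/\Delta_s)^2\bigr]$, and there is no $\Lb^2$ moment hypothesis anywhere in the standing assumptions that would bridge that gap. The paper avoids this entirely by staying in $\Lb^1$: conditioning on $\G$ and using only $\expec[\,|\delta^\eps_t-1|\,]\le h^\eps_t=\sqrt{\e^{\eps t}-1}$ gives the $\G$-conditional $\Lb^1$ bound $x\Delta_t h^{\eps_k}_t+\int_0^t(\Delta_t/\Delta_u)h^{\eps_k}_{t-u}f(\ze_u)\,\ud u$, which by stationarity has the law of $xD_th^{\eps_k}_t+\int_0^tD_uh^{\eps_k}_uf(Z_u)\,\ud u$ under $\prob$; the truncation $1\wedge\cdot$ then converts the a.s.\ facts $\e^{\kappa t}D_t\to 0$ and $\int_0^\infty D_uh^{\eps_k}_uf(Z_u)\,\ud u\to 0$ into the needed smallness \emph{in probability}, which, after choosing $T_{n_k}\gg k$, suffices. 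Your argument would need an additional $\Lb^2$ integrability hypothesis (not present here) to be made rigorous; as written it does not follow from Assumption~\ref{ass: standing}.
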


\begin{proof}[Proof of Lemma \ref{lem: deg_conv_in_prob}]
For ease of presentation we adopt the following notational conventions. First, for any measurable function $f$ and probability measure $\nu$ on $F$ set
\begin{equation}\label{eq: pi_int}
\langle h,\nu\rangle \dfn \int_F hd\nu.
\end{equation}
Next, similarly to $\hat{\pi}^x_T$ in \eqref{eq: LT_def}, we define $\hat{\pi}^{\eps,x}_T$ to be the empirical measure of $(\ze,\chi^{\eps,x})$ on $[0,T]$ for $\chi^{\eps,x}$ as in \eqref{eq: delta_chi_new_eps}. Thus, we write
\begin{equation*}
\frac{1}{T}\int_0^T h(\ze_t,\chi^x_t)dt = \langle h,\hat{\pi}^x_T\rangle;\qquad \frac{1}{T}\int_0^T h(\ze_t,\chi^{\eps,x}_t)dt = \langle h,\hat{\pi}^{\eps,x}_T\rangle.
\end{equation*}
Proposition \ref{prop: backwards_ergodicity} implies for all $x>0$ and $\eps > 0$ that
\[
\qprob\text{-}\lim_{T\to\infty} \inner{h}{\hat{\pi}^{\eps,x}_T} = \inner{h}{\pi^\eps}.
\]
Indeed, \eqref{eq: p-ergodic_AA_eps} gives for all $(z,x)\in F$:
\begin{equation}
\lim_{T\to\infty}\langle h, \hat{\pi}^{\eps,x}_T\rangle = \inner{h}{\pi^{\eps}};\qquad \qprob^{z,x} \ a.s..
\end{equation}
Thus, the above limit holds $\qprob$ almost surely, and hence in probability.


To prove \eqref{eq: p-ergodic} we need to show that for any increasing $\Real_+$-valued sequence $(T_n)_{\nin}$ such that $\limn T_n = \infty$, there is a sub-sequence $(T_{n_k})_{\kin}$ such that
\[
\qprob\text{-} \lim_{k\to\infty} \inner{h}{\hat{\pi}^x_{T_{n_k}}} = \inner{h}{\pi},
\]
as this implies \eqref{eq: p-ergodic} by considering double sub-sequences. To this end, let $(\eps_k)_{k \in \Natural}$ be any strictly positive sequence that converges to zero, and assume that $\eps_1 < \kappa$, where $\kappa > 0$ is from Assumption (A5). Next, pick $T_{n_k}$ large enough so that $k/T_{n_k} \rightarrow 0$ and such that
\[
\qprob \bra{\abs{\inner{h}{\hat{\pi}^{\eps_k,x}_{T_{n_k}}} -\inner{h}{\pi^{\eps_k}} } > \frac{1}{k}} \leq \frac{1}{k}.
\]
As argued above, this is possible since $\inner{h}{\hat{\pi}^{\eps_k,x}_T}$ converges to $\inner{h}{\pi^{\eps_k}}$ in $\qprob$ probability.  Since Lemma \ref{lem: pi_eps_to_pi} implies $\lim_{\eps\to 0}\inner{h}{\pi^{\eps_k}}= \inner{h}{\pi}$ it follows that
\[
\qprob\text{-}\lim_{k\to\infty} \inner{h}{\hat{\pi}^{\eps_k,x}_{T_{n_k}}} = \inner{h}{\pi}.
\]
Since
\begin{equation*}
\begin{split}
\left|\inner{h}{\hat{\pi}^x_{T_{n_k}}} - \inner{h}{\pi}\right| &\leq \left| \inner{h}{\hat{\pi}^x_{T_{n_k}}} - \inner{h}{\hat{\pi}^{\eps_k,x}_{T_{n_k}}}\right| + \left|\inner{h}{\hat{\pi}^{\eps_k,x}_{T_{n_k}}} - \inner{h}{\pi}\right|,
\end{split}
\end{equation*}
it suffices to show
\[
\qprob\text{-}\lim_{k\to\infty}\left|\inner{h}{\hat{\pi}^{\eps_k,x}_{T_{n_k}}} - \inner{h}{\hat{\pi}^x_{T_{n_k}}}\right| = 0.
\]
In fact, the claim is that
\[
\lim_{k \to \infty} \expec^{\qprob} \bra{ \abs{\inner{h}{\hat{\pi}^{\eps_k,x}_{T_{n_k}}}- \inner{h}{\hat{\pi}^x_{T_{n_k}}}}  } = 0,
\]
or the even stronger (recall $\inner{h}{\hat{\pi}^x_T} = (1/T)\int_0^T h(\ze_t,\chi^x_t)\ud t$, $\inner{h}{\hat{\pi}^{\eps_k,x}_T} = (1/T)\int_0^T h(\ze_t,\chi^{\eps_k,x}_t)\ud t$):
\begin{equation} \label{eq: L^1-est}
\lim_{k \to \infty}  \pare{ \frac{1}{T_{n_k}} \int_0^{T_{n_k}} \expec^{\qprob} \bra{ \left|h(\zeta_t, \chi^{\eps_k,x}_t) - h(\zeta_t, \chi^x_t)\right| } \ud t  } = 0.
\end{equation}
From \eqref{eq: lip-est_A}:
\begin{equation}\label{eq: L^1-est_a}
\frac{1}{T_{n_k}} \int_0^{T_{n_k}} \expec^{\qprob} \bra{ \left|h(\zeta_t, \chi^{\eps_k,x}_t) - h(\zeta_t, \chi_t^x)\right| } \ud t \leq \frac{K}{T_{n_k}}\int_0^{T_{n_k}} \expec^{\qprob}\bra{1\wedge \left|\chi^{\eps_k,x}_t - \chi^x_t\right|}\ud t.
\end{equation}
Furthermore, recall that
\[
\chi^x_t = \Delta_t\left(x + \int_0^t \frac{1}{\Delta_u}f(\ze_u)\ud u\right), \quad \chi^{\eps_k,x}_t = \Delta^{\eps_k,x}_t\left(x + \int_0^t \frac{1}{\Delta^{\eps_k}_u}f(\ze_u)\ud u\right),
\]
where $\Delta^{\eps_k}$ is from \eqref{eq: delta_chi_new_eps}. With $\delta^{\eps_k}\dfn \mathcal{E}\pare{\sqrt{\eps_k}\hat{B}}$ it follows that under $\qprob$
\begin{align*}
|\chi^{\eps_k,x}_t - \chi^x_t| &\leq x |\Delta^{\eps_k}_t - \Delta_t| + \int_0^t \abs{\frac{\Delta^{\eps_k}_t}{\Delta^{\eps_k}_u} - \frac{\Delta_t}{\Delta_u}} f(\zeta_u) \ud u \\
& = x\Delta_t |\delta^{\eps_k}_t - 1| + \int_0^t \frac{\Delta_t}{\Delta_u} \abs{\frac{\delta^{\eps_k}_t}{\delta^{\eps_k}_u} - 1} f(\zeta_u) \ud u.
\end{align*}
With $\G$ now denoting the $\sigma$ field generated by $\ze_0$, $W$ and $B$, by the independence of $\hat{B}$ and $\G$ it follows that
\begin{equation}\label{eq: cond_exp_ub}
\expec^{\qprob} \bra{|\chi^{\eps_k,x}_t - \chi^{x}_t| \such \G} \leq x \Delta_t h^{\eps_k}_t + \int_0^t \frac{\Delta_t}{\Delta_u} h^{\eps_k}_{t -u} f(\zeta_u) \ud u.
\end{equation}
where for any $\eps > 0$, $h^{\eps}$  is from Lemma \ref{lem: pi_eps_to_pi}. Since $\ze$ is stationary under $\qprob$,  it holds for all $t>0$ that the distribution of $\Delta_t$ under $\qprob$ coincides with the distribution of  $D_t$ under $\prob$ and the distribution of $\int_0^t (\Delta_t / \Delta_u) h^{\eps_k}_{t -u} f(\zeta_u) \ud u$ under $\qprob$ is the same as the distribution of $\int_0^t D_u h^{\eps_k}_{u} f(Z_u) \ud u$ under $\prob$.

We next claim there exists a sequence $\delta_k \to 0$ such that
\begin{equation}\label{eq: good_delta_k}
\sup_{t \in [k, \infty)} \prob \bra{1\wedge\left(x D_t h^{\eps_k}_t + \int_0^t D_u h^{\eps_k}_{u} f(Z_u) \ud u\right)   > \delta_k } \leq \delta_k, \quad \forall k \in \Natural.
\end{equation}
This is shown at the end of the proof.  Admitting this fact, and using $\expec^{\qprob} \bra{1 \wedge |\chi^{\eps_k,x}_t - \chi^{x}_t| \such \G} \leq 1 \wedge \expec^{\qprob} \bra{|\chi^{x,\eps_k}_t - \chi^{x}_t| \such \G}$, it follows that
\begin{equation*}
\begin{split}
&\lim_{k \to \infty} \pare{ \sup_{t \in [k, \infty)} \expec^{\qprob} \bra{ 1 \wedge |\chi^{\eps_k,x}_t - \chi^{x}_t|  }} =  \lim_{k \to \infty} \pare{ \sup_{t \in [k, \infty)} \expec^{\qprob} \bra{\expec^{\qprob} \bra{1 \wedge |\chi^{\eps_k,x}_t - \chi^{x}_t| \such \G}  } }\\
&\leq\lim_{k\to\infty} \pare{\sup_{t\in[k,\infty)} \expec\bra{1\wedge\left(x D_t h^{\eps_k}_t + \int_0^t D_u h^{\eps_k}_{u} f(Z_u) \ud u\right)}}\\
&\leq \lim_{k\to\infty}2\delta_k = 0.
\end{split}
\end{equation*}
In the above, the first inequality holds because of \eqref{eq: cond_exp_ub} and the second by \eqref{eq: good_delta_k} and the fact that for any r.v. $Y$, $\expec\bra{1\wedge Y} \leq \delta + \prob\bra{1\wedge Y > \delta}$.  The last equality follows by construction of $\delta_k$. Recall that $T_{n_k}$ was chosen so that $\lim_{k \to \infty} (k / T_{n_k}) = 0$ , it follows that
\begin{align*}
\limsup_{k \to \infty} \pare{ \frac{1}{T_{n_k}} \int_0^{T_{n_k}} \expec^{\qprob} \bra{ 1 \wedge | \chi^{\epsilon_k,x}_t - \chi^x_t| } \ud t } &\leq \limsup_{k\to\infty}\left(\frac{k}{T_{n_k}} + \frac{T_{n_k}-k}{T_{n_k}}\sup_{t \in [k, \infty)} \expec^{\qprob} \bra{ 1 \wedge |\chi^{\eps_k,x}_t - \chi^{x}_t|  }\right) \\
&= 0.
\end{align*}
which in view of \eqref{eq: L^1-est_a} implies \eqref{eq: L^1-est}, finishing the proof. Thus, it remains to show \eqref{eq: good_delta_k}.  Since for any $a,b>0$, $1\wedge(a+b)\leq 1\wedge a + 1\wedge b$ the two terms on the right hand side of \eqref{eq: good_delta_k} are treated separately.  Let $\delta_k > 0$.  First we have
\begin{align*}
\prob\bra{1\wedge x D_t h^{\eps_k}_t > \delta_k} &\leq \prob\bra{x D_t h^{\eps_k}_t > \delta_k}\\
&= \prob\bra{x D_t e^{\kappa t} > \delta_k e^{\kappa t}/h^{\eps_k}_t}
\end{align*}
Now, $h^{\eps_k}_t \leq  e^{\eps_k /2 t}$ so on $t\geq k$, $e^{\kappa t}/h^{\eps_k}_t \geq e^{(\kappa-\eps_k/2)t} \geq e^{(\kappa-\eps_k/2)k}$ since $\eps_k /2 < \kappa$.  So, for any $\delta_k > e^{-(\kappa-\eps_k/2)(k/2)}$ it follows that
\begin{equation*}
\prob\bra{x D_t h^{\eps_k}_t > \delta_k} \leq \prob\bra{ x D_t e^{\kappa t} \geq e^{(\kappa-\eps_k/2)(k/2)}}
\end{equation*}
Set $\tilde{\delta}_k \dfn \sup_{t\geq k} \prob\bra{x D_t e^{\kappa t} \geq e^{(\kappa-\eps_k/2)(k/2)}}$.  Since $D_t e^{\kappa t}$ goes to $0$ in $\prob$ probability, it follows that $\tilde{\delta}_k \rightarrow 0$.  Thus, taking $\delta_k$ to be maximum of $\tilde{\delta}_k$ and $e^{-(\kappa-\eps_k/2)(k/2)}$ it follows that
\begin{equation*}
\prob\bra{1\wedge \chi D_t h^{\eps_k}_t > \delta_k} \leq \delta_k.
\end{equation*}
Turning to the second term in \eqref{eq: good_delta_k}, it is clear that
\begin{equation*}
1\wedge \int_0^t D_u h^{\eps_k}_u f(Z_u)\ud u \leq 1 \wedge \int_0^\infty D_u h^{\eps_k}_u f(Z_u)\ud u
\end{equation*}
As shown in the proof of Lemma \ref{lem: pi_eps_to_pi}, $\int_0^\infty D_u h^{\eps_k}_uf(Z_u)\ud u$ goes to $0$ as $k\to\infty$ almost surely. Thus by the bounded convergence theorem, $\expec\bra{1\wedge \int_0^\infty D_u h^{\eps_k}_u f(Z_u)\ud u} \rightarrow 0$ as $k\to\infty$.  Since
\begin{equation*}
\prob\bra{1\wedge \int_0^\infty D_u h^{\eps_k}_u f(Z_u)\ud u > \delta_k} \leq \frac{1}{\delta_k}\expec\bra{1\wedge \int_0^\infty D_u h^{\eps_k}_u f(Z_u)\ud u},
\end{equation*}
upon defining $\delta_k \dfn \sqrt{\expec\bra{1\wedge\int_0^\infty D_u h^{\eps_k}_u f(Z_u)\ud u}}$ it follows that $\prob\bra{1\wedge \int_0^\infty D_u h^{\eps_k}_u f(Z_u)\ud u > \delta_k} \leq \delta_k$ and $\delta_k \rightarrow 0$.  This concludes the proof since to combine the two terms one can take $\delta_k$ to be twice the maximum of the $\delta_k$'s for individual terms.

\end{proof}

The next lemma proves the convergence in Lemma \ref{lem: deg_conv_in_prob} for $f\in \Lb^1(E,p)$, not just $f\in C^2(E;\Real_+)$.

\begin{lem}\label{lem: deg_conv_in_prob_gen}

Let Assumption \ref{ass: standing} hold. Then for all $x > 0$ and all $h\in\mathcal{C}$:
\begin{equation}\label{eq: p-ergodic_gen}
\qprob\text{-}\lim_{T\to\infty} \frac{1}{T}\int_0^T h(\ze_t,\chi^x_t)dt  = \int_{F}hd\pi.
\end{equation}

\end{lem}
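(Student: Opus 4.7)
The plan is to approximate $f$ by smooth functions for which Lemma~\ref{lem: deg_conv_in_prob} applies, and to control the resulting discrepancy via a stationarity coupling built from a two-sided extension. First, I would construct a sequence $(f_n)_{\nin}\subset C^2(E;\Real_+)$ with $f_n\to f$ in $\Lb^1(E,p)$ and $\int_E f_n p > 0$ for all large $n$; standard mollification of $f$ after truncation on the exhausting family $(E_n)_{\nin}$ from (A1) does the job. Each $f_n$ then satisfies (A5), so Lemma~\ref{lem: deg_conv_in_prob} gives, for every $h\in\mathcal{C}$ and $x>0$, that $\qprob\text{-}\lim_{T\to\infty}\inner{h}{\hat\pi^{n,x}_T}=\inner{h}{\pi^{(n)}}$, where $X_0^{(n)}\dfn\int_0^\infty D_t f_n(Z_t)\ud t$, $\pi^{(n)}$ denotes the joint law of $(Z_0,X_0^{(n)})$ under $\prob$, and $\hat\pi^{n,x}_T$ is the empirical measure along $(\zeta,\chi^{n,x})$, i.e.\ the reversed process built from $\zeta$ and $f_n$ exactly as $\chi^x$ is built from $\zeta$ and $f$.

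Next I would verify $\pi^{(n)}\to\pi$ weakly. Re-examining the proof of Lemma~\ref{lem: X_finite_A} provides an integer $n_0$ and a $\prob$-a.s.\ finite random variable $C$ with $D_t\leq Ce^{-t/n_0}$ for all $t\geq 0$, so $|X_0^{(n)}-X_0|\leq C\int_0^\infty e^{-t/n_0}|f_n-f|(Z_t)\ud t$; by Fubini and stationarity of $Z$ the expectation of this integral equals $n_0\|f_n-f\|_{\Lb^1(E,p)}\to 0$. Hence $X_0^{(n)}\to X_0$ in $\prob$-probability, which gives $\inner{h}{\pi^{(n)}}\to\inner{h}{\pi}$ for every $h\in\mathcal{C}$ and uniform tightness of $\set{X_0^{(n)}-X_0}_{\nin}$ under $\prob$.

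The crux is to bound $T^{-1}\int_0^T\expec^{\qprob}\bra{1\wedge|\chi^{n,x}_t-\chi^x_t|}\ud t$ so that a subsequence/triangle scheme of the same type as in Lemma~\ref{lem: deg_conv_in_prob} can be rerun. Here I would enlarge $(\Omega,\bF,\qprob)$ to support a two-sided stationary extension of $(Z,B)$ (legitimate since $Z$ is stationary and ergodic), and then, from the resulting doubly-infinite $(Z_s,D_s)_{s\in\Real}$, define $\hat\chi_t\dfn X_{-t}$ and $\hat\chi^{(n)}_t\dfn X^{(n)}_{-t}$ with $X_s\dfn D_s^{-1}\int_s^\infty D_u f(Z_u)\ud u$ and $X^{(n)}_s\dfn D_s^{-1}\int_s^\infty D_u f_n(Z_u)\ud u$. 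Then $(\zeta,\hat\chi)$ and $(\zeta,\hat\chi^{(n)})$ are jointly stationary solutions of \eqref{eq: ze_chi_dyn_noT} driven by the same $\zeta,W,B$, and by construction $\hat\chi_0-\hat\chi^{(n)}_0=X_0-X_0^{(n)}\to 0$ in $\qprob$-probability. Applying the identity $\chi^y-\chi^{y'}=(y-y')\Delta$ from Remark~\ref{rem: chi_via_zeta} separately to $f$ and to $f_n$ yields
\[
\chi^{n,x}_t-\chi^x_t=\pare{\hat\chi^{(n)}_t-\hat\chi_t}+\pare{\hat\chi_0-\hat\chi^{(n)}_0}\Delta_t,
\]
whence joint stationarity gives $\expec^{\qprob}\bra{1\wedge|\hat\chi^{(n)}_t-\hat\chi_t|}=\expec^{\prob}\bra{1\wedge|X_0^{(n)}-X_0|}\to 0$ \emph{uniformly} in $t$, while for the second summand the tightness of $\set{\hat\chi_0-\hat\chi^{(n)}_0}_{\nin}$ together with the $\qprob$-a.s.\ decay $\Delta_t\to 0$ (a reversed-world analog of Lemma~\ref{lem: X_finite_A}: the ergodic theorem applied to $\log\Delta$ gives $\lim_{t\to\infty}t^{-1}\log\Delta_t=-\int_E\pare{a+\tfrac12(\theta'c\theta+|\eta|^2)}p\,\ud z<0$ after an integration-by-parts cancellation that uses $\tilde L^Z p=0$) furnishes decay as $t\to\infty$ uniformly in $n$.

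Splitting $[0,T]$ into $[0,k]$ and $[k,T]$ and choosing $k(T)$ and $n(T)$ in the fashion of the proof of Lemma~\ref{lem: deg_conv_in_prob}, one concludes that $T^{-1}\int_0^T\expec^{\qprob}\bra{1\wedge|\chi^{n(T),x}_t-\chi^x_t|}\ud t\to 0$; the Lipschitz bound in $\mathcal{C}$ together with the triangulation
\[
\inner{h}{\hat\pi^x_T}-\inner{h}{\pi}=\pare{\inner{h}{\hat\pi^x_T}-\inner{h}{\hat\pi^{n(T),x}_T}}+\pare{\inner{h}{\hat\pi^{n(T),x}_T}-\inner{h}{\pi^{(n(T))}}}+\pare{\inner{h}{\pi^{(n(T))}}-\inner{h}{\pi}}
\]
then yields the claim. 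The hard part will be setting up the coupling that makes $(\hat\chi,\hat\chi^{(n)})$ jointly stationary on a single probability space sharing its driving noise with $\chi^x$; the two-sided stationary extension of $(Z,B)$ is the key tool that makes this transparent, after which the remaining estimates are routine.
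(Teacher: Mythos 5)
Your proposal is correct and has the same mollification-plus-triangulation backbone as the paper's proof, but you use a genuinely different mechanism for the crucial error estimate. The paper works directly with the bound $|\chi^{n,x}_t-\chi^x_t|\le\Delta_t\int_0^t\Delta_u^{-1}|f^n-f|(\ze_u)\,\ud u$ and observes that, for each \emph{fixed} $t$, this $\qprob$-random variable has the same law as $\int_0^t D_u|f^n-f|(Z_u)\,\ud u$ under $\prob$ (immediate from the time-reversed construction $\Delta^T_u=D_T/D_{T-u}$, $\ze^T_u=Z_{T-u}$ and stationarity of $Z$), giving $\sup_{t}\qprob\bra{|\chi^{n,x}_t-\chi^x_t|>\delta}\le\prob\bra{\int_0^\infty D_u|f^n-f|(Z_u)\,\ud u>\delta}\to0$ with no additional machinery. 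You instead build a two-sided stationary extension, set $\hat\chi_t=X_{-t}$, $\hat\chi^{(n)}_t=X^{(n)}_{-t}$, and split $\chi^{n,x}_t-\chi^x_t=(\hat\chi^{(n)}_t-\hat\chi_t)+(\hat\chi_0-\hat\chi^{(n)}_0)\Delta_t$: the first piece is exactly stationary under $\qprob$ (its law is that of $X^{(n)}_0-X_0$ under $\prob$ for every $t$), and the second is killed by the decay of $\Delta_t$ together with tightness of $\{\hat\chi_0-\hat\chi^{(n)}_0\}_n$. Both are sound; the trade-offs are that the paper avoids the two-sided coupling entirely and never needs $\Delta_t\to0$ $\qprob$-a.s.\ (which, as you note, would require rerunning the Lemma~\ref{lem: X_finite_A} argument in the reversed world, and which also has a potential integrability subtlety since $\theta'c\nabla p/p$ and $\nabla\cdot(c\theta)$ need not individually lie in $\Lb^1(E,p)$), whereas your decomposition is conceptually cleaner because it separates the stationary from the transient contribution. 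Two small remarks: $\Delta_t\to0$ in $\qprob$-probability already suffices for your second piece and follows cheaply from the one-dimensional marginal law equality $\law(\Delta_t\,|\,\qprob)=\law(D_t\,|\,\prob)$, so the a.s.\ claim is overkill; and the paper's choice of mollification rate $\|f^n-f\|_{\Lb^1(E,p)}\le n^{-2}2^{-n}$ is used to get $\prob$-a.s.\ convergence of $X^n_0\to X_0$ via Borel--Cantelli, but your Fubini/stationarity argument for convergence in probability is equally adequate for the subsequent subsequence extraction.
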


\begin{proof}[Proof of Lemma \ref{lem: deg_conv_in_prob_gen}]

By mollifying $f$, since $p$ is tight in $E$ there exists a sequence of functions $f^n\in C^2(E)\cap \Lb^1(E,p)$ with $f^n\geq 0$ such that
\begin{equation}\label{eq: mollify}
\int_E \left|f^n(z)-f(z)\right|p(z)dz \leq n^{-2}2^{-n}.
\end{equation}
Note that
\begin{equation*}
\begin{split}
\expec\bra{\int_0^\infty ne^{-t/n}|f^n(Z_t)-f(Z_t)|\ud t} &= \int_0^\infty ne^{-t/n}\expec\bra{|f^n(Z_t)-f(Z_t)|}\ud t\\
&= \int_0^\infty ne^{-t/n}\left(\int_E |f^n(z)-f(z)|p(z)dz\right)\ud t\\
&\leq \int_0^\infty n^{-1}e^{-t/n}2^{-n}\ud t\\
&= 2^{-n}.
\end{split}
\end{equation*}
Thus, by the Borel-Cantelli lemma it follows that $\prob$ almost surely
\begin{equation*}
\lim_{n\rightarrow\infty} \int_0^\infty ne^{-t/n}|f^n(Z_t)-f(Z_t)|\ud t = 0.
\end{equation*}
For $n>\kappa$ from Assumption \ref{ass: standing}, let $A_n = n^{-1}\sup_{\tir}(e^{t/n}D_t)$. Note that $\lim_{n\rightarrow \infty} A_n = 0$ almost surely since for each $\delta>0$ we can find a $\prob$ almost surely finite random variable $T = T(\delta)$ so that $D_t \leq \delta e^{-\kappa t}$ for $t\geq T$, and hence
\begin{equation*}
A_n = \frac{1}{n}\sup_{t\in\tir}\left(e^{t/n}D_t\right) \leq \frac{1}{n}e^{T/n}\sup_{t\leq T} D_t + \frac{\delta}{n}.
\end{equation*}
Since
\begin{equation*}
\int_0^\infty D_t|f^n(Z_t)-f(Z_t)|\ud t \leq A_n\int_0^\infty ne^{-t/n}|f^n(Z_t)-f(Z_t)|\ud t
\end{equation*}
we see that
\begin{equation}\label{eq: approx_val}
\lim_{n\rightarrow\infty} \int_0^\infty D_t\left|f^n(Z_t)-f(Z_t)\right|\ud t = 0;\qquad \prob-\textrm{ a.s.}
\end{equation}
Thus, with $X^n_0\dfn \int_0^\infty D_t f^n(Z_t)\ud t$ that $\lim_{n\rightarrow\infty} X^n_0 = X_0$ almost surely and hence if $\pi^n$ is the joint distribution of $(Z_0,X^n_0)$ then $\pi^n$ converges to $\pi$ weakly, as $n\rightarrow \infty$. Now, on the same probability space as in Lemma \ref{lem: horizon_T_not_nec} define
\begin{equation*}
\chi^{x,n}_t\dfn \Delta_t\left(x + \int_0^t \Delta_t^{-1}f^n(\ze_t)\ud t\right);\quad t\geq 0.
\end{equation*}
Note that
\begin{equation*}
\left|\chi^{n,x}_t-\chi^x_t\right| \leq \Delta_t\int_0^t \Delta_u^{-1}\left|f^n(\ze_u)-f(\ze_u)\right|\ud u, \qquad \forall\ t\geq 0,
\end{equation*}
and by construction the law of the process on the right hand side above under $\qprob$ is the same as the law of $\int_0^\cdot D_u \left|f^n(Z_u)-f(Z_u)\right|\ud u$ under $\prob$.  It thus follows that for $\delta > 0$
\begin{equation*}
\sup_{\tir}\qprob\bra{\left|\chi^{n,x}_t-\chi^x_t\right| > \delta} \leq \prob\bra{\int_0^\infty D_u\left|f^n(Z_u)-f(Z_u)\right|\ud u > \delta} \dfn \phi^n(\delta).
\end{equation*}
By \eqref{eq: approx_val} we can find a non-negative sequence $(\delta_n)$ such that $\delta_n\rightarrow 0$ and $\lim_{\delta\rightarrow 0}\phi^n(\delta_n) = 0$. Now, for $h\in\mathcal{C}$ we have almost surely for $t\geq 0$:
\begin{equation*}
\left|h(\ze_t,\chi^{n,x}_t) - h(\ze_t,\chi^{x}_t)\right| \leq K \left(1\wedge \left|\chi^{n,x}_t - \chi^x_t\right|\right).
\end{equation*}
Therefore, with $\hat{\pi}^{x,n}_T$ denoting the empirical law of $(\ze,\chi^{n,x})$ we have
\begin{equation*}
\expec^{\qprob}\bra{\left|\inner{h}{\hat{\pi}^{x,n}_T} - \inner{h}{\hat{\pi}^x_T}\right|} \leq \frac{K}{T}\int_0^T\expec^{\qprob}\bra{1\wedge \left|\chi^{n,x}_t - \chi^x_t\right|}\ud t.
\end{equation*}
Since for any $0<\delta < 1$ and random variable $Y$ we have $\expec\bra{1\wedge |Y|} \leq \delta + \prob\bra{|Y|>\delta}$ it follows that for any $n$
\begin{equation*}
\sup_{T\in\Real_+}\expec^{\qprob}\bra{\left|\inner{h}{\hat{\pi}^{x,n}_T} - \inner{h}{\hat{\pi}^x_T}\right|} \leq K\left(\phi^n(\delta) + \delta\right),
\end{equation*}
and hence for the given sequence $(\delta_n)$:
\begin{equation}\label{eq: approx_val_2}
\limsup_{n\rightarrow\infty} \sup_{T\in\Real_+}\expec^{\qprob}\bra{\left|\inner{h}{\hat{\pi}^{x,n}_T} - \inner{h}{\hat{\pi}^x_T}\right|} \leq \limsup_{n\rightarrow\infty} K\left(\phi^n(\delta_n) + \delta_n\right) = 0.
\end{equation}
Now, fix an sequence $(T_k)$ such that $\lim_{k\rightarrow \infty} T_k = \infty$.  Since Lemma \ref{lem: deg_conv_in_prob} implies for each $n$, $\qprob-\lim_{T\rightarrow\infty} |\inner{h}{\hat{\pi}^{x,n}_T}-\inner{h}{\pi^n}| = 0$ for each $n$ we can find a $T_{k_n}$ so that
\begin{equation*}
\qprob\bra{\left|\inner{h}{\hat{\pi}^{x,n}_{T_{k_n}}} - \inner{h}{\pi^n}\right| > \frac{1}{n}} < \frac{1}{n}
\end{equation*}
It thus follows that
\begin{equation*}
\qprob-\lim_{n\rightarrow\infty} \left|\inner{h}{\hat{\pi}^{n,x}_{T_{k_n}}} - \inner{h}{\pi^n}\right| = 0.
\end{equation*}
Since $\lim_{n\rightarrow\infty} \left|\inner{h}{\pi^n}-\inner{h}{\pi}\right| = 0$ it follows by \eqref{eq: approx_val_2} that for each $\gamma > 0$ that
\begin{equation*}
\begin{split}
\qprob\bra{\left|\inner{h}{\hat{\pi}^x_{T_{k_n}}} - \inner{h}{\pi}\right| > \gamma} & \leq \qprob\bra{\left|\inner{h}{\hat{\pi}^x_{T_{k_n}}} - \inner{h}{\hat{\pi}^{x,n}_{T_{k_n}}}\right| > \frac{\gamma}{3}} + \qprob\bra{\left|\inner{h}{\hat{\pi}^{x,n}_{T_{k_n}}}-\inner{h}{\pi^n}\right| > \frac{\gamma}{3}}\\
&\qquad\qquad + 1_{\left|\inner{h}{\pi^n}-\inner{h}{\pi}\right| > \frac{\gamma}{3}}\\
& \leq \frac{3}{\gamma}\sup_{T\in\Real_+}\expec^{\qprob}\bra{\left|\inner{h}{\hat{\pi}^x_T} - \inner{h}{\hat{\pi}^{x,n}_T}\right|} + \qprob\bra{\left|\inner{h}{\hat{\pi}^{x,n}_{T_{k_n}}}-\inner{h}{\pi^n}\right| > \frac{\gamma}{3}}\\
&\qquad\qquad + 1_{\left|\inner{h}{\pi^n}-\inner{h}{\pi}\right| > \frac{\gamma}{3}}\\
&\rightarrow 0\textrm{ as } n\rightarrow \infty.
\end{split}
\end{equation*}
We have just showed that for any sequence $(\inner{h}{\hat{\pi}^x_{T_k}})$ there is a subsequence $(\inner{h}{\hat{\pi}^x_{T_{k_n}}})$ which converges in $\qprob$ probability to $\inner{h}{\pi}$ which in fact proves that $(\inner{h}{\hat{\pi}^x_T})$ converges in $\qprob$ probability to $\inner{h}{\pi}$, proving \eqref{eq: p-ergodic_gen}.
\end{proof}

The next lemma strengthens the convergence in Lemma \ref{lem: deg_conv_in_prob_gen} to almost sure convergence under $\qprob$, but for $\pi$ almost every $x>0$, for $h\in\mathcal{C}$ from \eqref{eq: lip-est_A}.

\begin{lem}\label{lem: ergodic_deg}

Let Assumption \ref{ass: standing} hold. Then for all $h\in \mathcal{C}$ and $\pi$ almost every $x>0$:
\begin{equation}\label{eq: p-ergodic_A}
\lim_{T\to\infty}\frac{1}{T}\int_0^T h(\ze_t,\chi^x_t)dt = \int_F h d\pi ;\qquad  \qprob \textrm{ a.s.}.
\end{equation}
\end{lem}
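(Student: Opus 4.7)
The plan is to combine Birkhoff's ergodic theorem on the stationary process $(\ze,\chi)$ from Lemma \ref{lem: horizon_T_not_nec} with a pathwise coupling that shows $\chi^x$ and $\chi$ become asymptotically close in Ces\`aro average, and then to use Lemma \ref{lem: deg_conv_in_prob_gen} to identify the almost sure limit.

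Since $(\ze,\chi)$ is $\qprob$-stationary with marginal $\pi$, Birkhoff's ergodic theorem for continuous-time stationary processes yields an invariant random variable $\Psi$ (a priori depending on $h$) with $A_T\dfn \frac{1}{T}\int_0^T h(\ze_t,\chi_t)\,dt \to \Psi$, $\qprob$-a.s. From the closed-form expression in Remark \ref{rem: chi_via_zeta}, the map $y\mapsto \chi^y_t$ is affine in $y$ with slope $\Delta_t$, so $\chi^x_t-\chi_t = \Delta_t(x-\chi_0)$. Combined with $h\in\mathcal{C}$, this gives the pathwise estimate
\begin{equation*}
\frac{1}{T}\int_0^T \left|h(\ze_t,\chi^x_t)-h(\ze_t,\chi_t)\right|\,dt \leq \frac{K(h)}{T}\int_0^T \pare{1\wedge \Delta_t |x-\chi_0|}\,dt.
\end{equation*}
If $\Delta_t\to 0$ exponentially fast $\qprob$-a.s., the integrand on the right is dominated by $Ce^{-\kappa t}(|x|+|\chi_0|)$ and is therefore integrable on $\Real_+$ for each $x$; the Ces\`aro average then vanishes as $T\to\infty$. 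Consequently $\Phi^x_T\dfn \frac{1}{T}\int_0^T h(\ze_t,\chi^x_t)\,dt\to\Psi$, $\qprob$-a.s., for every $x>0$ (in fact on a common $\qprob$-full-measure set independent of $x$, since the pathwise bound above is uniform in $x$ once $\chi_0$ is frozen).

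The exponential decay of $\Delta$ is the main technical point. Under $\prob$, this is exactly Lemma \ref{lem: X_finite_A}(i) applied to $D$. Under $\qprob$, one starts from the SDE \eqref{eq: Delta_no_T_def} and gets
\begin{equation*}
\frac{1}{t}\log\Delta_t = \frac{1}{t}\int_0^t \pare{\theta'c\frac{\nabla p}{p}+\nabla\cdot(c\theta) - a -\tfrac{1}{2}\pare{\theta'c\theta+|\eta|^2}}(\ze_u)\,du + \frac{M_t}{t},
\end{equation*}
where $M$ is a continuous local martingale whose quadratic variation grows at most linearly. Using the ergodic theorem for $\ze$ (Proposition \ref{prop: zeta_t_dynamics}), together with the integration-by-parts identity $\int_E \theta'c\nabla p\,dz = -\int_E \nabla\cdot(c\theta)\,p\,dz$ (which one must justify from the assumed regularity of $c,\theta$ and integrability of $p$), the drift term converges $\qprob$-a.s.\ to $-\int_E (a+\tfrac{1}{2}(\theta'c\theta+|\eta|^2))\,p\,dz$. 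This quantity is strictly negative by Assumption (A5), while $M_t/t\to 0$ $\qprob$-a.s., so $\log\Delta_t/t$ stays bounded above by some $-\kappa<0$ for $t$ large, giving the required exponential decay.

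To conclude, Lemma \ref{lem: deg_conv_in_prob_gen} gives $\Phi^x_T\to \int h\,d\pi$ in $\qprob$-probability for every $x>0$; combined with the almost sure convergence $\Phi^x_T\to\Psi$ established above, uniqueness of limits in probability forces $\Psi = \int h\,d\pi$ $\qprob$-a.s. Therefore \eqref{eq: p-ergodic_A} holds $\qprob$-a.s.\ for every $x>0$, which is (strictly stronger than, but includes) the $\pi$-a.e.\ $x$ statement in the lemma. The key difficulty in the whole argument is the justification in the third paragraph of the exponential decay of $\Delta$ under $\qprob$; once that is in place, everything else is a soft mixture of Birkhoff, coupling, and the in-probability convergence already proved.
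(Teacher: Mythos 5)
Your overall strategy — Birkhoff for the stationary pair $(\ze,\chi)$, a pathwise coupling $\chi^x_t-\chi_t=\Delta_t(x-\chi_0)$, and identification of the invariant limit via the in-probability convergence from Lemma \ref{lem: deg_conv_in_prob_gen} — is coherent, and in fact collapses Lemma \ref{lem: ergodic_deg} and the subsequent Lemma \ref{lem: ergodic_deg_all_x} into a single argument. The paper's own proof of this lemma is different and avoids the coupling entirely: it applies Birkhoff to $(\ze,\chi)$, identifies the invariant limit $Y=\inner{h}{\pi}$ by integrating the in-probability convergence from Lemma \ref{lem: deg_conv_in_prob_gen} against the marginal $\pi|_x$, and then disintegrates the stationary measure to transfer the a.s.\ statement to $\qprob^{z,x}$ for $\pi$-a.e.\ $x$. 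No estimate on $\Delta$ is required at this stage.

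The gap in your argument is the exponential decay of $\Delta$ under $\qprob$. You write the Itô decomposition of $\log\Delta_t$, invoke the ergodic theorem for $\ze$ on the drift, and claim the martingale part $M$ has quadratic variation growing at most linearly. Both points fail in the generality of Assumption (A5). The drift integrand contains $-\tfrac12(\theta'c\theta+|\eta|^2)$ and $[M]_t=\int_0^t(\theta'c\theta+|\eta|^2)(\ze_u)\,du$; the ergodic theorem gives a finite a.s.\ Cesàro limit (and linear growth of $[M]$) only when $\theta'c\theta+|\eta|^2\in\Lb^1(E,p)$, but (A5) is explicitly designed \emph{not} to require this — compare \eqref{eq: eps_finite_cond} and the remark around \eqref{eq: no_eps_finite_cond}. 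When $\theta'c\theta+|\eta|^2\notin\Lb^1(E,p)$ your drift average is $-\infty$ and $[M]_t/t\to\infty$, so you cannot separate the two terms and conclude $\tfrac1t\log\Delta_t\le-\kappa$ eventually. The paper's Lemma \ref{lem: X_finite_A} deals with precisely this difficulty on the $\prob$ side by treating drift and martingale jointly via Dambis--Dubins--Schwarz together with the strong law for Brownian motion and the $\eps$-slack in (A5); a correct proof of exponential decay of $\Delta$ under $\qprob$ would have to adapt that argument rather than apply the naive ergodic-plus-SLLN splitting. In addition, your integration-by-parts identity $\int_E\theta'c\nabla p\,dz=-\int_E\nabla\cdot(c\theta)p\,dz$ requires vanishing boundary contributions that are not a standing hypothesis and would need justification. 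Finally, note that you need less than you try to prove: for the Cesàro average $\frac1T\int_0^T(1\wedge\Delta_t|x-\chi_0|)\,dt$ to vanish it suffices that $\int_0^\infty\Delta_t\,dt<\infty$ $\qprob$-a.s., which is the statement the paper actually establishes (by a contradiction/transfer argument) in Lemma \ref{lem: ergodic_deg_all_x}; demanding exponential decay is unnecessarily strong and is exactly what breaks in the non-integrable regime.
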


\begin{proof}[Proof of Lemma \ref{lem: ergodic_deg}]

We again use the notation in \eqref{eq: pi_int}. Recall $\chi$ from Lemma \ref{lem: horizon_T_not_nec} and define $\hat{\pi}_T$ as the empirical law of $(\ze,\chi)$ on $[0,T]$. Given that $(\ze,\chi)$ is stationary under $\qprob$, the ergodic theorem implies that for all bounded measurable functions $h$ on $F$ that there is a random variable $Y$ such that
\begin{equation}\label{eq: ze_chi_ergodic_g}
\lim_{T\to\infty} \inner{h}{\hat{\pi}_T} = Y;\qquad \qprob \textrm{ a.s.}.
\end{equation}
By Lemma \ref{lem: deg_conv_in_prob_gen} it holds that for $h\in \mathcal{C}$, $Y=\inner{h}{\pi}$ with $\qprob$ probability one. Indeed, let $\delta > 0$ and note:
\begin{equation*}
\begin{split}
\qprob\bra{\left|Y - \inner{h}{\pi}\right|\geq \delta} &\leq \qprob\bra{\left|Y-\inner{h}{\hat{\pi}_T}\right| + \left|\inner{h}{\hat{\pi}_T}-\inner{h}{\pi}\right| \geq \delta}\\
&\leq \qprob\bra{\left|Y-\inner{h}{\hat{\pi}_T}\right| \geq \frac{\delta}{2}} + \qprob\bra{\left|\inner{h}{\hat{\pi}_T}-\inner{h}{\pi}\right| \geq \frac{\delta}{2}}
\end{split}
\end{equation*}
The first of these two terms goes to $0$ by \eqref{eq: ze_chi_ergodic_g}. As for the second, denote by $\pi|_{x}$ the marginal of $\pi$ with respect to $\chi$. Then
\begin{equation*}
\qprob\bra{\left|\inner{h}{\hat{\pi}_T}-\inner{h}{\pi}\right| \geq \frac{\delta}{2}} = \int_0^\infty \pi|_{x}(dx)\qprob\bra{\left|\inner{h}{\hat{\pi}^x_T}-\inner{h}{\pi}\right| \geq \frac{\delta}{2}}
\end{equation*}
By Lemma \ref{lem: deg_conv_in_prob} the integrand goes to $0$ as $T\to\infty$ for all $x > 0$ and thus the result follows by the bounded convergence theorem.  Next, we have
\begin{equation*}
1 = \qprob\bra{\lim_{T\to\infty} \inner{h}{\hat{\pi}_T} = \inner{h}{\pi}} = \int_0^\infty \pi\big|_{x}(dx)\qprob\bra{\lim_{T\to\infty} \inner{h}{\hat{\pi}^x_T} = \inner{h}{\pi}},
\end{equation*}
and thus \eqref{eq: p-ergodic_A} holds for $\pi$ a.e. $x>0$, finishing the proof.
\end{proof}

The last preparatory lemma strengthens Lemma \ref{lem: ergodic_deg} to show almost sure convergence for all starting points $x>0$, not just $\pi$ almost every $x>0$.

\begin{lem}\label{lem: ergodic_deg_all_x}
Let Assumption \ref{ass: standing} hold. Then for all $h\in\mathcal{C}$ and all $x>0$
\begin{equation}\label{eq: p-ergodic_A_all_x}
\lim_{T\to\infty}\frac{1}{T}\int_0^T h(\ze_t,\chi^x_t)dt = \int_F h d\pi ;\qquad  \qprob \textrm{ a.s.}.
\end{equation}
\end{lem}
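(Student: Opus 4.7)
The plan is to use Lemma \ref{lem: ergodic_deg}, which already yields the desired convergence for $\pi|_{\chi}$-a.e.\ starting point, and promote it to every $x>0$ by means of a simple Lipschitz/contraction estimate. The key algebraic observation, immediate from the closed form of $\chi^x$ in Remark \ref{rem: chi_via_zeta}, is that for all $x,y\in(0,\infty)$ and $\tir$,
\[
\chi^x_t - \chi^y_t \;=\; (x-y)\,\Delta_t.
\]
Combined with the uniform Lipschitz bound \eqref{eq: lip-est_A} for $h\in\mathcal{C}$, this yields the pathwise estimate
\[
\left|\frac{1}{T}\int_0^T h(\ze_t,\chi^x_t)\ud t - \frac{1}{T}\int_0^T h(\ze_t,\chi^y_t)\ud t\right| \;\leq\; \frac{K(h)}{T}\int_0^T\bigl(1\wedge |x-y|\Delta_t\bigr)\ud t.
\]

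The first substantive step is to prove $\lim_{t\to\infty}\Delta_t = 0$, $\qprob$-a.s. I would do this by mirroring the argument of Lemma \ref{lem: X_finite_A}(i) on the reversed probability space: It\^o's formula applied to \eqref{eq: Delta_no_T_def} gives
\[
-\log\Delta_t \;=\; \int_0^t \Bigl(a + \tfrac{1}{2}(\theta'c\theta+|\eta|^2) - \theta'c\tfrac{\nabla p}{p} - \nabla\cdot(c\theta)\Bigr)(\ze_s)\,\ud s \;-\; M_t,
\]
where $M$ is a continuous local martingale with $[M,M]_t=\int_0^t(\theta'c\theta+|\eta|^2)(\ze_s)\ud s$. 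Since $\ze$ is ergodic under $\qprob$ with invariant density $p$ by Proposition \ref{prop: zeta_t_dynamics}, and since integration by parts yields $\int_E\bigl(\theta'c\tfrac{\nabla p}{p}+\nabla\cdot(c\theta)\bigr)p\,\ud z=\int_E\nabla\cdot(pc\theta)\,\ud z=0$, the ergodic theorem together with the Dambis--Dubins--Schwarz / strong law of large numbers dichotomy for $M$ (applied exactly as in Lemma \ref{lem: X_finite_A}) shows that $-\log\Delta_t / t$ has a strictly positive liminf under Assumption (A5). Hence $\Delta_t\to 0$ $\qprob$-a.s., in fact at an exponential rate.

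Finally, I fix a reference point $y^{*}>0$ as follows: since Assumption \ref{ass: standing} implies $\prob[0<X_0<\infty]=1$ (positivity follows from $f\ge 0$, $\int fp>0$, ergodicity of $Z$, and strict positivity of $D$), the marginal $\pi|_{\chi}$ is a probability measure on $(0,\infty)$, so Lemma \ref{lem: ergodic_deg} provides at least one $y^{*}>0$ at which \eqref{eq: p-ergodic_A} holds off some $\qprob$-null set $N_1$. Let $N_2\dfn\{\omega:\Delta_t(\omega)\not\to 0\}$, also $\qprob$-null. For any target $x>0$, off $N_1\cup N_2$ one has $1\wedge |x-y^{*}|\Delta_t\to 0$ and hence, since this quantity is bounded by $1$, its Cesàro average vanishes as well. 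The displayed contraction then forces $T^{-1}\int_0^T h(\ze_t,\chi^x_t)\ud t$ and $T^{-1}\int_0^T h(\ze_t,\chi^{y^{*}}_t)\ud t$ to share a common limit, namely $\int_F h\,\ud\pi$, proving \eqref{eq: p-ergodic_A_all_x}. The principal obstacle is the almost-sure decay of $\Delta$ under $\qprob$: equality of the one-dimensional marginals of $\Delta_t$ with those of $D_t$ only gives convergence in distribution for free, and the pathwise statement genuinely requires re-running the analysis of Lemma \ref{lem: X_finite_A} in the ergodic framework of Proposition \ref{prop: zeta_t_dynamics}.
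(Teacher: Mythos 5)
Your overall reduction coincides with the paper's: fix a single reference point from Lemma \ref{lem: ergodic_deg}, observe $\chi^x_t-\chi^y_t=(x-y)\Delta_t$, and invoke the uniform Lipschitz bound from \eqref{eq: lip-est_A}. Where you diverge is in establishing that the resulting remainder vanishes: you aim to show $\Delta_t\to 0$ $\qprob$-a.s.\ and invoke Ces\`aro, whereas the paper establishes the stronger $\qprob[\int_0^\infty\Delta_t\ud t<\infty]=1$ through a contradiction argument that leverages the fact that the law of $\Delta$ on $[0,T]$ under $\qprob$ agrees with that of $D$ on $[0,T]$ under $\prob$ for every finite $T$, and thus avoids any pathwise analysis of the reversed drift.

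The gap is precisely in your proposed proof that $\Delta_t\to 0$. Replicating Lemma \ref{lem: X_finite_A} under $\qprob$ introduces into the drift of $-\log\Delta$ the extra terms $-\theta'c\tfrac{\nabla p}{p}-\nabla\cdot(c\theta)$, and the argument then requires two things that Assumption \ref{ass: standing} does not supply. First, your integration-by-parts step $\int_E\nabla\cdot(pc\theta)\ud z=0$ is a boundary statement: it needs $pc\theta$ to vanish (in the flux sense) on $\partial E$ or to decay at infinity when $E=\Real^d$, and nothing in (A1)--(A5) controls the behaviour of $p$, $c$, or $\theta$ near $\partial E$. Second, and more structurally, even if the average vanished one still needs the drift to be amenable to the ergodic theorem, i.e.\ the negative part of $a+\tfrac{1-\eps}{2}(\theta'c\theta+\eta'\eta)-\theta'c\tfrac{\nabla p}{p}-\nabla\cdot(c\theta)$ to lie in $\Lb^1(E,p)$. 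Assumption (A5) provides this only for $\bigl(a+\tfrac{1-\eps}{2}(\theta'c\theta+\eta'\eta)\bigr)_-$; the extra terms $\theta'c\tfrac{\nabla p}{p}$ and $\nabla\cdot(c\theta)$ are entirely unconstrained in $\Lb^1(E,p)$, so the Dambis--Dubins--Schwarz/SLLN argument of Lemma \ref{lem: X_finite_A} does not transfer. These are the exact obstructions the paper's distributional-equality-plus-contradiction argument is designed to sidestep, and without closing them your proof of a.s.\ decay of $\Delta$ does not go through.
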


\begin{proof}[Proof of Lemma \ref{lem: ergodic_deg_all_x}]
Recall from Remark \ref{rem: chi_via_zeta} that $\chi^x$ takes the form
\begin{equation}\label{eq: chi_via_zeta_new}
\chi^x_t = \Delta_t\left(x + \int_0^t \frac{1}{\Delta_t}f(\ze_t)\ud t\right);\qquad t\geq 0.
\end{equation}
Let $h\in\mathcal{C}$. By Lemma \ref{lem: ergodic_deg}, there is some $x_0 > 0$ such that \eqref{eq: p-ergodic_A_all_x} holds.  Using the notation in \eqref{eq: pi_int} and \eqref{eq: chi_via_zeta_new} it easily follows for any $x>0$ that
\begin{equation*}
\begin{split}
\left|\inner{h}{\hat{\pi}^x_T} - \inner{h}{\hat{\pi}^{x_0}_T}\right| &\leq \frac{1}{T}\int_0^T \left|h(\ze_t,\chi^{x}_t) - h(\ze_t,\chi^{x_0}_t)\right|\ud t \leq \frac{K}{T}\int_0^T\left(1\wedge |\chi^{x}_t-\chi^{x_0}_t|\right)\ud t\\
& = \frac{K}{T}\int_0^T \left(1\wedge \Delta_t|x-x_0|\right)\ud t \leq \frac{K|x-x_0|}{T}\int_0^\infty \Delta_t \ud t
\end{split}
\end{equation*}
We will show below that $\qprob\bra{\int_0^\infty \Delta_t\ud t < \infty} = 1$.  Admitting this it holds that $\qprob$ almost surely, $\lim_{T\to\infty} |\inner{h}{\hat{\pi}^x_T} - \inner{h}{\hat{\pi}^{x_0}_T}| = 0$ and hence the result follows since \eqref{eq: p-ergodic_A_all_x} holds for $x_0$.

It remains to prove that $\qprob\bra{\int_0^\infty \Delta_t \ud t < \infty} = 1$.  By way of contradiction assume there is some $0<\delta\leq 1$ so that $\qprob\bra{\int_0^\infty \Delta_t \ud t = \infty} = \delta$.  Then, for each $N$ it holds that $\qprob\bra{\int_0^\infty \Delta_t \ud t > N}\geq \delta$, which in turn implies $\lim_{T\to\infty} \qprob\bra{\int_0^T \Delta_t \ud t > N} \geq \delta$. By construction, for any fixed $T>0$ the law of $\Delta$ on $[0,T]$ under $\qprob$ coincides with the law of $D$ under $\prob$ on $[0,T]$.  It this holds that $\lim_{T\to\infty} \prob\bra{\int_0^T D_t \ud t > N} \geq \delta$.  But, this gives $\prob\bra{\int_0^\infty D_t \ud t > N} \geq \delta$ for all $N$ and hence $\prob\bra{\int_0^\infty D_t \ud t = \infty} > 0$.  But this violates Assumptions \ref{ass: standing} since $\lim_{t\to\infty} e^{\kappa t}D_t = 0$ $\prob$ almost surely for some $\kappa >0$. Thus, $\qprob\bra{\int_0^\infty \Delta_t \ud t < \infty} = 1$ finishing the proof.
\end{proof}

With all the above lemmas, the proof of Theorem \ref{thm:main} is now given.

\begin{proof}[Proof of Theorem \ref{thm:main}]

We again adopt the notation in \eqref{eq: pi_int}. In view of Lemma \ref{lem: horizon_T_not_nec} the remaining statement Theorem \ref{thm:main} which must be proved is that there is a set $\Omega_0\in\F_{\infty}$ with $\qprob\bra{\Omega_0} = 1$ such that \eqref{eq: main_pi_conv} holds: i.e.
\begin{equation*}
\omega\in \Omega_0 \Longrightarrow \lim_{T\to\infty} \inner{h}{\hat{\pi}^x_T}(\omega) = \inner{h}{\pi}\ \textrm{for all}\ x>0, h\in C_b(F;\Real).
\end{equation*}
Recall the definition of $\mathcal{C}$ from \eqref{eq: lip-est_A} and let $h\in C_b(F;\Real)\cap \mathcal{C}$. In view of Lemma \ref{lem: ergodic_deg_all_x} there is a set $\Omega_h\in\F_{\infty}$ such that $\qprob\bra{\Omega_h} = 1$ and
\begin{equation*}
\omega\in \Omega_h \Longrightarrow \lim_{T\to\infty} \inner{h}{\hat{\pi}^x_T}(\omega) = \inner{h}{\pi}\ \textrm{for all}\ x>0.
\end{equation*}

Let the (countable subset) $\tilde{\mathcal{C}}\subset \mathcal{C}$ be as in the technical Lemma \ref{lem: approximation} below and set $\Omega_0 = \cap_{h\in\tilde{\mathcal{C}}} \Omega_h$. Clearly, $\qprob\bra{\Omega_0} = 1$.  Let $\omega\in\Omega_0$ and $h\in C_b(F;\Real)$ with $C=\sup_{y\in F}|h(y)|$. Let $\eps > 0$ and for $n\geq 5$ take $^\uparrow \phi^{n}_{m,k}, ^\downarrow \phi^{n}_{m,k}$ and $\theta^n$ as in Lemma \ref{lem: approximation} such that \eqref{eq: approx_values} therein holds.  In what follows the $\omega$ will be suppressed, but all evaluations are understood to hold for this $\omega$.

Let $x>0$. With $\nu$ from \eqref{eq: approx_values} equal to $\hat{\pi}^x_T$ it follows that
\begin{equation*}
\inner{^\uparrow \phi^n_{m,k}}{\hat{\pi}^x_T} - 2C\inner{1-\theta^{n-4}}{\hat{\pi}^x_T} - 2\eps \leq \inner{h}{\hat{\pi}^x_T} \leq \inner{^\downarrow \phi^n_{m,k}}{\hat{\pi}^x_T} + 2C\inner{1-\theta^{n-4}}{\hat{\pi}^x_T} + 2\eps.
\end{equation*}
With $\nu$ from \eqref{eq: approx_values} equal to $\pi$ one obtains
\begin{equation*}
\inner{^\uparrow \phi^n_{m,k}}{\pi} - 2C\inner{1-\theta^{n-4}}{\pi} - 2\eps \leq \inner{h}{\pi} \leq \inner{^\downarrow \phi^n_{m,k}}{\pi} + 2C\inner{1-\theta^{n-4}}{\pi} + 2\eps.
\end{equation*}
Putting these two together yields
\begin{equation*}
\begin{split}
\inner{h}{\hat{\pi}^x_T} - \inner{h}{\pi} &\geq \inner{^\uparrow \phi^n_{m,k}}{\hat{\pi}^x_T} - 2C\inner{1-\theta^{n-4}}{\hat{\pi}^x_T} - 2\eps - \left(\inner{^\downarrow \phi^n_{m,k}}{\pi} + 2C\inner{1-\theta^{n-4}}{\pi} + 2\eps\right)\\
&= \inner{^\uparrow \phi^n_{m,k}}{\hat{\pi}^x_T} - \inner{^\downarrow \phi^n_{m,k}}{\pi} - 2C\left(\inner{1-\theta^{n-4}}{\hat{\pi}^x_T} + \inner{1-\theta^{n-4}}{\pi}\right) - 4\eps.
\end{split}
\end{equation*}
Since $\theta^{n-4},^\uparrow\phi^n_{m,k}, ^\downarrow \phi^n_{m,k} \in \tilde{\mathcal{C}}\subset\mathcal{C}$ taking $T\to\infty$ gives
\begin{equation*}
\liminf_{T\to\infty} \inner{h}{\hat{\pi}^x_T} - \inner{h}{\pi} \geq \inner{^\uparrow \phi^n_{m,k}}{\pi} - \inner{^\downarrow \phi^n_{m,k}}{\pi} - 4C\inner{1-\theta^{n-4}}{\pi} - 4\eps.
\end{equation*}
Now, from Lemma \ref{lem: approximation} we know for fixed $m,n$ that the functions $^\uparrow\phi^{n}_{m,k}$ and $^\downarrow \phi^n_{m,k}$ are increasing and decreasing respectively in $k$ and such that a) $\lim_{k\to\infty} ^\downarrow\phi^n_{m,k}(y) - ^\uparrow \phi^n_{m,k}(y) = 0$ for $y\in \bar{F}_{n-2}$ and b) $|^\uparrow\phi^n_{m,k}(y)-^\uparrow\phi^n_{m,k}(y)| \leq 2C + 2\eps$ for all $y\in F$ and $n,m,k$. Therefore, taking $k\to\infty$ in the above and using the monotone convergence theorem we obtain
\begin{equation*}
\liminf_{T\to\infty} \inner{h}{\hat{\pi}^x_T} - \inner{h}{\pi} \geq -2(C+\eps)\pi\bra{\bar{F}_{n-2}^c} - 4C\inner{1-\theta^{n-4}}{\pi} - 4\eps.
\end{equation*}
From Lemma \ref{lem: approximation} we know that $0\leq \theta^n(y)\leq 1$, $\lim_{n\to\infty}\theta^n(y) = 1$ for all $y\in F$. Thus, by the bounded convergence theorem and the fact that $\pi$ is tight in $F$ it follows that by taking $n\uparrow\infty$:
\begin{equation*}
\liminf_{T\to\infty} \inner{h}{\hat{\pi}^x_T} - \inner{h}{\pi} \geq - 4\eps.
\end{equation*}
Taking $\eps\downarrow 0$ gives that $\liminf_{T\to\infty} \inner{h}{\hat{\pi}^x_T} - \inner{h}{\pi} \geq 0$.  Thus, we have just shown for $\omega\in \Omega_0$, $x>0$ and $h\in C_b(F;\Real)$ that
\begin{equation*}
\liminf_{T\to\infty} \inner{h}{\hat{\pi}^x_T}(\omega) - \inner{h}{\pi}\geq 0.
\end{equation*}
By applying the above to $\hat{h} = -h \in C_b(F;\Real)$ we see that
\begin{equation*}
\limsup_{T\to\infty} \inner{h}{\hat{\pi}^x_T}(\omega) - \inner{h}{\pi} \leq 0,
\end{equation*}
which finishes the proof.
\end{proof}







\begin{appendix}

\section{Some Technical Results}

\begin{lem}\label{lem: tilde_LR_calc}

Let Assumptions \ref{ass: standing} hold, and additionally assume that $|\eta| > 0$, $f\in C^2(E;\Real_+)$.  Recall $F$ from \eqref{eq: cond_cdf} and the invariant density $p$ for $Z$. Let $h\in C^2(F)$ be given and set \begin{equation}\label{eq: phi_psi_def}
\phi(z,x) \dfn p(z)h(z,x);\qquad \psi(z,x)\dfn \int_0^x h(z,y)dy.
\end{equation}
Let the operator $L$ be as in \eqref{eq: L_def} and the operator $L^R = A^{ij}\partial^2_{ij} + (b^R)^i\partial_i$ be as in the proof of Proposition \ref{prop: backwards_ergodicity}, where $A$ is from \eqref{eq: bA_def} and $b^R$ is from \eqref{eq: b_R_def}.  Let $\tilde{L}^R$ be the formal adjoint of $L^R$. Then $\tilde{L}^R \phi = p\ \partial_{x}\left(L\psi\right)$. In particular, if $L\psi= 0$ then $\tilde{L}^R\phi = 0$.
\end{lem}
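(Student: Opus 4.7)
The plan is a direct computation that compares $\tilde{L}^R \phi$ and $p\,\partial_x(L\psi)$ term by term. Writing the formal adjoint explicitly as
\[
\tilde{L}^R u = \tfrac{1}{2}\partial^2_{\alpha\beta}(A^{\alpha\beta}u) - \partial_{\alpha}((b^R)^{\alpha}u),
\]
with indices $\alpha,\beta$ running over $\{z_1,\dots,z_d,x\}$, I would substitute $u = \phi = p(z)h(z,x)$, expand via the Leibniz rule, and organize the output by the differential order in $h$. The fact that $\partial_x p = 0$ streamlines matters considerably: the mixed and pure-$x$ blocks of $A$ interact with $p$ only through the explicit factors of $x$ appearing in \eqref{eq: bA_def}.

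On the other side, I would compute $L\psi$ using the identities $\partial_x\psi = h$, $\partial^2_{xx}\psi = \partial_x h$, $\partial^2_{z_i x}\psi = \partial_{z_i}h$, $\partial_{z_i}\psi = \int_0^x \partial_{z_i}h(z,y)\,\ud y$, and $\partial^2_{z_iz_j}\psi = \int_0^x \partial^2_{z_iz_j}h(z,y)\,\ud y$, then apply $\partial_x$ and multiply by $p$. The fundamental theorem of calculus collapses every integral-in-$y$ contribution to a local term at $(z,x)$, yielding a clean expression linear in $h$, $\nabla_z h$, $\partial_x h$, and their second-order analogues, with coefficients polynomial in $x$.

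The comparison proceeds coefficient-by-coefficient. The second-order-in-$h$ coefficients match immediately, both sides yielding $\tfrac{1}{2}p A^{\alpha\beta}\partial^2_{\alpha\beta}h$. For the first-order terms, the coefficient of $\partial_\beta h$ on the left is $p\bigl[(\partial_{\alpha}A^{\alpha\beta}) + A^{\alpha\beta}\partial_{\alpha}p/p - (b^R)^{\beta}\bigr]$; a short computation using the explicit forms of $A$ and $b^R$ collapses this to $p(m^k+(c\theta)^k)$ for $\beta=z_k$ and to $p(xa - f + 2x(\theta'c\theta+\eta'\eta))$ for $\beta=x$, matching the analogous coefficients in $p\,\partial_x(L\psi)$.

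The main obstacle is the zeroth-order-in-$h$ coefficient, which is a lengthy combination involving $\partial^2_{ij}c^{ij}$, $c^{ij}\partial^2_{ij}p/p$, $\nabla\cdot(c\nabla p/p)$, $\nabla\cdot m$, and various cross-products with $\dvg{c}$, $\theta$, and $\eta$. I would reduce this by expanding $\nabla\cdot(c\nabla p/p)$ via the product rule and then invoking the identity $\tilde{L}^Z p = 0$ from Assumption (A3) (equation \eqref{eq: p_density_pde}), which after dividing by $p$ reads
\[
\tfrac{1}{2}c^{ij}\partial^2_{ij}p/p + \tfrac{1}{2}\partial^2_{ij}c^{ij} - m\cdot\nabla p/p + \dvg{c}\cdot\nabla p/p - \nabla\cdot m = 0.
\]
After this cancellation, only $p(a+\theta'c\theta+\eta'\eta)$ survives on the left, which is exactly the $h$-coefficient of $p\,\partial_x(L\psi)$. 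The ``in particular'' statement then follows immediately: if $L\psi = 0$, then $\tilde{L}^R \phi = p\cdot 0 = 0$.
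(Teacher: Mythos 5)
Your proposal is correct and takes essentially the same approach as the paper: a direct computation of $\tilde{L}^R\phi$ and $p\,\partial_x(L\psi)$ organized by the differential order in $h$, with the zeroth-order cancellation hinging on $\tilde{L}^Z p = 0$ (the paper phrases this as $\nabla\cdot(p\xi)=0$ for $\xi=\tfrac12(c\nabla p/p + \dvg{c})-m$, which is the same identity). The only difference is bookkeeping — you use the standard adjoint form $\tilde L^R u = \tfrac12\partial^2_{\alpha\beta}(A^{\alpha\beta}u)-\partial_\alpha((b^R)^\alpha u)$, whereas the paper works in divergence form and introduces the auxiliary quantities $\xi$ and $H(c,\theta)$ — and your stated coefficient identities ($p(m+c\theta)$ for $\nabla h$, $p(2x(\theta'c\theta+\eta'\eta)-f+xa)$ for $\partial_x h$, $p(a+\theta'c\theta+\eta'\eta)$ for $h$) all check out.
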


\begin{proof}

For notational ease, the arguments will be suppressed when writing functions except for the $x$ appearing in the drifts and volatilities of the operators. Now, recall the dynamics for the reversed process $(\ze,\chi)$ in \eqref{eq: ze_chi_dyn_noT}:
\begin{equation*}
\begin{split}
d\ze_t &= \left(c\frac{\nabla p}{p} + \dvg{c} - m\right)(\ze_t)dt + \sigma(\ze_t)dW_t\\
d\chi_t&= \left(f(\ze_t) - \chi_t\left(a - \theta'c\frac{\nabla p}{p} - \nabla\cdot(c\theta)\right)(\ze_t)\right)dt + \chi_t\left(\theta'c(\ze_t)dW_t + \eta(\ze_t)'dB_t\right),
\end{split}
\end{equation*}
and note, as is mentioned in the proof of Proposition \ref{prop: backwards_ergodicity}, that $L^R$ is the generator for $(\ze,\chi)$. To further simplify the calculations, set
\begin{equation}\label{eq: xi_def}
\xi  \dfn  \frac{1}{2}\left(c\frac{\nabla p}{p} + \dvg{c}\right) -m,
\end{equation}
and
\begin{equation}\label{eq: F_funct_def}
H(c,\theta) \dfn \nabla\cdot(c\theta) - \theta'\dvg{c}.
\end{equation}
Note that by \eqref{eq: p_density_pde} it follows that $0 = \nabla\cdot (p\xi)$. With this notation we have that
\begin{equation*}
\begin{split}
d\ze_t &= \left(m + 2\xi\right)(\ze_t)dt + \sigma(\ze_t)dW_t\\
d\chi_t&= \left(f(\ze_t) - \chi_t\left(a - 2\theta'(m+\xi) - H(c,\theta)\right)(\ze_t)\right)dt + \chi_t\left(\theta'c(\ze_t)dW_t + \eta(\ze_t)'dB_t\right),
\end{split}
\end{equation*}
which in turns yields that
\begin{equation}\label{eq: temp_A_b_R}
A = \left(\begin{array}{c c} c & xc\theta\\ x\theta'c & x^2(\theta'c\theta + \eta'\eta)\end{array}\right);\qquad b^R = \left(\begin{array}{c} m + 2\xi \\ f - x\left(a - 2\theta'(m+\xi) - H(c,\theta)\right)\end{array}\right).
\end{equation}
along with
\begin{equation}\label{eq: temp_b}
b = \left(\begin{array}{c} m\\ -f +x\left(a + \theta'c\theta + \eta'\eta\right)\end{array}\right).
\end{equation}
Lastly, multivariate notation will be used for derivatives with respect to $z$ and single variate
notation used for derivatives with respect to $x$.  Thus, for the given $\phi$:
\begin{equation*}
\nabla_{(z,x)} \phi = (\nabla \phi, \dot{\phi});\qquad D^2_{(z,x)}\phi = \left(\begin{array}{c
      c} D^2 \phi & \nabla (\dot{\phi}) \\ \nabla (\dot{\phi})' &
    \ddot{\phi}\end{array}\right).
\end{equation*}
Since $\phi = p\ h$ and $p$ is not a function of $x$:
\begin{equation*}
\nabla_{(z,x)}\phi = \left(\begin{array}{c} p\nabla h + h\nabla p \\
    p\dot{h}\end{array}\right).
\end{equation*}
By definition, $\tilde{L}_R\phi = \nabla_{(z,x)}\cdot\left((1/2)(A\nabla_{(z,x)}\phi +
  \phi \dvgalt{A}) - b^R\phi\right)$. Using \eqref{eq: temp_A_b_R}:
\begin{equation*}
A\nabla_{(z,x)}\phi =
\left(\begin{array}{c} pc\nabla h + hc\nabla p + px\dot{h}c\theta \\
    px\theta'c\nabla h  + hx\theta'c\nabla p +
    px^2\dot{h}(\theta'c\theta + \eta'\eta)\end{array}\right).
\end{equation*}
Calculation shows
\begin{equation*}
\dvgalt{A} = \left(\begin{array}{c} \dvg{c} + c\theta \\
    x\nabla\cdot(c\theta) +2x(\theta'c\theta + \eta'\eta)\end{array}\right),
\end{equation*}
so that
\begin{equation*}
\begin{split}
\frac{1}{2}&(A\nabla_{(z,x)} \phi + \phi \dvgalt{A})\\
&=\frac{1}{2}\left(\begin{array}{c} pc\nabla h + hc\nabla p +
    px\dot{h}c\theta + ph\dvg{c} + phc\theta \\ px\theta'c\nabla h  + hx\theta'c\nabla p +
    px^2\dot{h}(\theta'c\theta + \eta'\eta) + pxh\nabla\cdot(c\theta)
    +2pxh(\theta'c\theta + \eta'\eta)\end{array}\right).
\end{split}
\end{equation*}
This gives $(1/2)(A\nabla_{(z,x)} \phi + \phi \dvgalt{A}) - b^R\phi = (\mathbf{A},\mathbf{B})'$
where
\begin{equation}\label{eq: temp_boldAB_val}
\begin{split}
\mathbf{A} & = \frac{1}{2}\left(pc\nabla h + hc\nabla p +
    px\dot{h}c\theta + ph\dvg{c} + phc\theta\right) - phm - 2ph\xi,\\
\mathbf{B} & = \frac{1}{2}\left(px\theta'c\nabla h  + hx\theta'c\nabla p +
    px^2\dot{h}(\theta'c\theta + \eta'\eta) + pxh\nabla\cdot(c\theta)
    +2pxh(\theta'c\theta + \eta'\eta)\right)\\
&\qquad\qquad -phf + pxha - 2pxh\theta'(m+\xi) - pxhH(c,\theta).
\end{split}
\end{equation}
Now, $\tilde{L}_R\phi = \nabla\cdot\mathbf{A} + \dot{\mathbf{B}}$. $\mathbf{A}$ is
treated first. From \eqref{eq: xi_def} it follows that $p\dvg{c} + c\nabla
p = 2p(m +\xi)$ and hence
\begin{equation*}
2\mathbf{A} = pc\nabla h + px\dot{h}c\theta +
phc\theta - 2ph\xi.
\end{equation*}
For a scalar function $f$ and $\Real^d$ valued function $g$, $\nabla\cdot(fg)
= f\nabla\cdot g + \nabla f'g$. Using this
\begin{equation*}
\begin{split}
2\nabla\cdot \mathbf{A} &= p\nabla\cdot(c\nabla h) + \nabla
h'c\nabla p + px\dot{h}\nabla\cdot(c\theta) + x\nabla(p\dot{h})'c\theta + ph\nabla\cdot(c\theta)\\
&\qquad\qquad + \nabla(ph)'c\theta -
2h\nabla\cdot(p\xi) - 2p\nabla h'\xi,\\
&= p\nabla\cdot(c\nabla h) + \nabla
h'c\nabla p + px\dot{h}\nabla\cdot(c\theta) + px\nabla(\dot{h})'c\theta +
x\dot{h}\nabla p'c\theta + ph\nabla\cdot(c\theta)\\
&\qquad\qquad + p\nabla h'c\theta +h\nabla p'c\theta-
2h\nabla\cdot(p\xi) - 2p\nabla h'\xi.\\
\end{split}
\end{equation*}
Using that $\nabla\cdot(c\nabla h) = \textrm{tr}\left(cD^2h\right) + \nabla
h'\dvg{c}$ and collecting terms by derivatives of $h$ gives
\begin{equation*}
\begin{split}
2\nabla\cdot \mathbf{A} &=p\textrm{tr}\left(cD^2h\right) +
px\nabla(\dot{h})'c\theta + \nabla
h'\left(p\dvg{c} + c\nabla p + pc\theta -
  2p\xi\right),\\
&\qquad + \dot{h}\left(px\nabla\cdot(c\theta) +
  x\nabla p'c\theta\right) +
h\left(p\nabla\cdot(c\theta) + \nabla p'c\theta - 2\nabla\cdot(p\xi)\right).
\end{split}
\end{equation*}
Since $p\dvg{c}+c\nabla p = 2p(m + \xi)$, $\nabla\cdot(p\xi)= 0$ and
$\nabla\cdot(c\theta) = H(c,\theta) + \theta'\dvg{c}$,
\begin{equation*}
\begin{split}
p\dvg{c} + c\nabla p + pc\theta -
  2p\xi&=2pm + pc\theta,\\
px\nabla\cdot(c\theta) +
  x\nabla p'c\theta&=2px\theta'(m+\xi) + pxH(c,\theta),\\
p\nabla\cdot(c\theta) + \nabla p'c\theta - 2\nabla\cdot(p\xi)&=2p\theta'(m+\xi)+pH(c,\theta).
\end{split}
\end{equation*}
Plugging this in and factoring out the $p$ yields
\begin{equation}\label{eq: temp_boldA_val}
\begin{split}
\frac{2}{p}\nabla\cdot\mathbf{A} &= \textrm{tr}\left(cD^2h\right) +
x\nabla(\dot{h})'c\theta +\nabla
h'\left(2m+c\theta\right) + \dot{h}\left(2x\theta'(m+\xi) +
  xH(c,\theta)\right)\\
&\qquad\qquad + h\left(2\theta'(m+\xi) +
  H(c,\theta)\right).
\end{split}
\end{equation}
Turning to $\mathbf{B}$ in \eqref{eq: temp_boldAB_val}. Using $p\dvg{c} + c\nabla p =
2p(m+\xi)$ and $\nabla\cdot(c\theta) = H(c,\theta) + \theta'\dvg{c}$ yields
\begin{equation*}
2\mathbf{B}=px\theta'c\nabla h - 2pxh\theta'(m +\xi) +
    px^2\dot{h}(\theta'c\theta + \eta'\eta)+2pxh(\theta'c\theta + \eta'\eta)-2phf + 2pxha  -pxhH(c,\theta).
\end{equation*}
Since only $h$ depends upon $x$,
\begin{align*}
2\dot{\mathbf{B}} & = p\theta'c\nabla h + px\nabla(\dot{h})'c\theta -
2ph\theta'(m+\xi) - 2px\dot{h}\theta'(m+\xi) +
2px\dot{h}(\theta'c\theta+\eta'\eta) +
px^2\ddot{h}(\theta'c\theta+\eta'\eta)\\
&+ 2ph(\theta'c\theta + \eta'\eta) + 2px\dot{h}(\theta'c\theta + \eta'\eta)
-2p\dot{h}f + 2pha + 2px\dot{h}a-phH(c,\theta)-px\dot{h}H(c,\theta).
\end{align*}
Grouping terms by derivatives of $h$ and factoring out the $p$ yields
\begin{align}
\frac{2}{p}\dot{\mathbf{B}} & = x\ddot{h}(\theta'c\theta+\eta'\eta) +
x\nabla(\dot{h})'c\theta +
h\left(-2\theta'(m+\xi) + 2(\theta'c\theta + \eta'\eta) + 2a -hH(c,\theta)\right)+ \nabla h'c\theta  \label{eq: temp_boldB_val}\\
\nonumber &+ \dot{h}\left(-2x\theta'(m+\xi) +
  4x(\theta'c\theta+\eta'\eta)-2f+2xa-xH(c,\theta)\right).
\end{align}
Putting together \eqref{eq: temp_boldA_val} and \eqref{eq: temp_boldB_val} and
using that $\tilde{L}_R\phi = \nabla\cdot\mathbf{A}+\dot{\mathbf{B}}$:
\begin{equation}\label{eq: tildeL_R_Val}
\begin{split}
\frac{1}{p}\tilde{L}_R\phi & =
\frac{1}{2}\textrm{tr}\left(cD^2h\right) + x\nabla(\dot{h})'c\theta +
\frac{1}{2}x^2\ddot{h}(\theta'c\theta + \eta'\eta) + \nabla
h'(m+c\theta)\\
&+ \dot{h}\left(2x(\theta'c\theta + \eta'\eta) - f + xa\right) +
h\left(\theta'c\theta + \eta'\eta + a\right).
\end{split}
\end{equation}
Turning now to $\psi$, since
\begin{equation*}
\begin{split}
L\psi = \frac{1}{2}\textrm{tr}\left(cD^2\psi\right) +
x\nabla(\dot{\psi})'c\theta +
\frac{1}{2}x^2\ddot{\psi}(\theta'c\theta+\eta'\eta) + \nabla\psi'm  +
\dot{\psi}\left(-f + xa + x(\theta'c\theta + \eta'\eta)\right),
\end{split}
\end{equation*}
it follows that (note : only $\psi$ depends upon $x$ and $\dot{\psi} = h$)
\begin{equation*}
\begin{split}
\dot{L\psi} &= \frac{1}{2}\textrm{tr}\left(cD^2\dot{\psi}\right) +
x\nabla(\ddot{\psi})'c\theta + \nabla(\dot{\psi})'c\theta+
x\ddot{\psi}(\theta'c\theta+\eta'\eta) +
\frac{1}{2}x^2\dddot{\psi}(\theta'c\theta+\eta'\eta)\\
&\qquad  + \nabla(\dot{\psi})'m  +
\ddot{\psi}\left(-f + xa + x(\theta'c\theta + \eta'\eta)\right) +
\dot{\psi}\left(a + \theta'c\theta + \eta'\eta\right),\\
&= \frac{1}{2}\textrm{tr}\left(cD^2\dot{\psi}\right) +
x\nabla(\ddot{\psi})'c\theta  +
\frac{1}{2}x^2\dddot{\psi}(\theta'c\theta+\eta'\eta)\\
&\qquad  + \nabla(\dot{\psi})'(m+c\theta)  +
\ddot{\psi}\left(2x(\theta'c\theta+\eta'\eta) -f + xa\right) +
\dot{\psi}\left(a + \theta'c\theta + \eta'\eta\right),\\
&= \frac{1}{2}\textrm{tr}\left(cD^2h\right) +
x\nabla(\dot{h})'c\theta  +
\frac{1}{2}x^2\ddot{h}(\theta'c\theta+\eta'\eta)\\
&\qquad  + \nabla h'(m+c\theta)  +
\dot{h}\left(2x(\theta'c\theta+\eta'\eta) -f + xa\right) +
h\left(a + \theta'c\theta + \eta'\eta\right).
\end{split}
\end{equation*}
But, from \eqref{eq: tildeL_R_Val} this last term is precisely
$(1/p)\tilde{L}_R\phi$.
\end{proof}


\begin{lem}\label{lem: approximation}
Let Assumption \ref{ass: standing} hold. Let $\mathcal{C}$ be as in \eqref{eq: lip-est_A}. Recall that $F=E\times(0,\infty)$ and let $\cbra{F_n}_{n\in\mathbb{N}}$ be a family of open, bounded, increasing subsets of $F$ with smooth boundary such that $F=\cup_n F_n$.  There exists a countable family of functions
\begin{equation}\label{eq: function_family}
\tilde{\mathcal{C}}\dfn \cbra{^\uparrow \phi^n_{m,k}, ^\downarrow \phi^n_{m,k}, \theta^n\ \such \ n,m,k\in\mathbb{N}, n\geq 3}\subset \mathcal{C}
\end{equation}
such that
\begin{enumerate}[1)]
\item For each $n\geq 3$, $0\leq \theta^n\leq 1$ with $\theta^n = 1$ on $\bar{F}_n$ and $\theta^n = 0$ on $F^c_{n+1}$.
\item For each $n\geq 3$ and $m$, the functions $^\uparrow \phi^n_{m,k}$ are increasing in $k$ and the functions $^\downarrow \phi^n_{m,k}$ are decreasing in $k$. Furthermore, for any $n\geq 3$ and $m$, $\lim_{k\to\infty} |^\uparrow \phi^n_{m,k}(y) - ^\downarrow \phi^n_{m,k}(y)| = 0$ for $y\in \bar{F}_{n-2}$.
\end{enumerate}
Additionally, for any $h\in C_b(F;\Real)$ set $C = C(h) \dfn \sup_{y\in F}|h(y)|$. Then, for any $\eps > 0$ and any integer $n\geq 5$ there exits an integer $m = m(\eps,n)$ such that for all $k\in \mathbb{N}$, $\sup_{y\in F}|^\uparrow\phi^n_{m,k}(y)|\leq C+\eps$, $\sup_{y\in F}|^\downarrow \phi^n_{m,k}(y)|\leq C+\eps$. Furthermore, for any Borel measure $\nu$ on $F$:
\begin{equation}\label{eq: approx_values}
\begin{split}
\int_F\ ^\uparrow \phi^n_{m,k}d\nu - 2C\int_F (1-\theta^{n-4})d\nu - 2\eps \leq \int_F hd\nu  \leq \int_F \ ^\downarrow \phi^n_{m,k}d\nu + 2C\int_F (1-\theta^{n-4})d\nu + 2\eps.
\end{split}
\end{equation}
\end{lem}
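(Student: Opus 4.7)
The plan is to construct $\tilde{\mathcal{C}}$ from three ingredients: standard cutoffs $\theta^n$, a countable universal family $\{h_m\}_{m \in \Natural}$ of trial functions, and for each triple $(n,m,k)$ a pair $^\uparrow\phi^n_{m,k}, ^\downarrow\phi^n_{m,k}$ obtained by offsetting $h_m\theta^n$ by $\pm 1/k$ and then truncating at level $\sup_F|h_m|$ to enforce uniform sup-norm control for every $k$, not merely for large $k$. First I would choose domains $\bar F_n \subset G^n \subset F_{n+1}$ with smooth boundary and mollify $\indic_{G^n}$ to obtain $\theta^n$ smooth with $0 \leq \theta^n \leq 1$, $\theta^n \equiv 1$ on $\bar F_n$, and $\theta^n \equiv 0$ on $F \setminus F_{n+1}$; since each $F_n$ can be taken as a smoothed product $E_n\times I_n$ with $I_n \subset (0,\infty)$ bounded and open, the required Lipschitz-in-$x$-uniformly-in-$z$ property for $\theta^n\in\mathcal{C}$ is automatic. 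For the trial family I would enumerate all functions $h_m = ((P\,\theta^N)\wedge M) \vee (-M)$ with $P$ a polynomial in $(z,x)$ having rational coefficients, $N\in\Natural$, and $M$ a positive rational; each $h_m$ lies in $\mathcal{C}$ because $P\,\theta^N$ is smooth with compact support, hence globally bounded and Lipschitz, and truncation preserves both properties.

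With these in hand, define
\begin{equation*}
^\uparrow\phi^n_{m,k} \dfn \big[(h_m\theta^n - 1/k)\vee(-\sup_F|h_m|)\big]\wedge \sup_F|h_m|, \quad ^\downarrow\phi^n_{m,k} \dfn \big[(h_m\theta^n + 1/k)\wedge \sup_F|h_m|\big]\vee(-\sup_F|h_m|).
\end{equation*}
Monotonicity in $k$ is immediate because $h_m\theta^n \mp 1/k$ is monotone in $k$ and lattice operations with constants preserve monotonicity. The global bound $\sup_F|^\uparrow\phi^n_{m,k}|,\ \sup_F|^\downarrow\phi^n_{m,k}| \leq \sup_F|h_m|$ is built into the definitions, and membership in $\mathcal{C}$ is inherited from $h_m\theta^n$ (bounded and globally Lipschitz) together with the fact that lattice operations preserve both boundedness and Lipschitz constants. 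As $k\to\infty$, $h_m\theta^n \mp 1/k \to h_m\theta^n$ pointwise, and since $|h_m\theta^n| \leq \sup_F|h_m|$ the outer truncations become inactive in the limit, so both $^\uparrow\phi^n_{m,k}$ and $^\downarrow\phi^n_{m,k}$ converge to $h_m\theta^n$; on $\bar F_{n-2}$ where $\theta^n\equiv 1$ the common limit equals $h_m$, giving $\lim_k |^\uparrow\phi^n_{m,k} - ^\downarrow\phi^n_{m,k}| = 0$ there.

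For the final claim, fix $h \in C_b(F)$ with $C = \sup_F|h|$, $\eps > 0$, and $n \geq 5$. The Stone--Weierstrass theorem applied on the compact set $\bar F_{n-2}$ produces a polynomial $P$ with rational coefficients satisfying $\sup_{\bar F_{n-2}}|P - h| \leq \eps/2$; taking $N \geq n-2$ and rational $M \in [C+\eps/2,\, C+\eps]$ yields a family member $h_m$ with $h_m = P$ on $\bar F_{n-2}$ (the truncation is inactive since $|P| \leq C+\eps/2 \leq M$ there), $\sup_{\bar F_{n-2}}|h_m - h| \leq \eps/2$, and $\sup_F|h_m| \leq M \leq C+\eps$, which delivers the sup-norm bound $\sup_F|^\uparrow\phi^n_{m,k}|,\ \sup_F|^\downarrow\phi^n_{m,k}| \leq C+\eps$ for all $k$. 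I would then verify the pointwise inequality region by region: on $\bar F_{n-2}$, where $\theta^n\equiv 1$, one has $^\uparrow\phi^n_{m,k} \leq h_m \leq h + \eps/2$ and $2C(1-\theta^{n-4}) \geq 0$, whence $^\uparrow\phi^n_{m,k} - 2C(1-\theta^{n-4}) - 2\eps \leq h - 3\eps/2 \leq h$; off $\bar F_{n-2}$, the inclusion $F_{n-3} \subset \bar F_{n-2}$ forces $\theta^{n-4} \equiv 0$, so $^\uparrow\phi^n_{m,k} - 2C - 2\eps \leq (C+\eps) - 2C - 2\eps \leq -C \leq h$. The symmetric bound $h \leq\ ^\downarrow\phi^n_{m,k} + 2C(1-\theta^{n-4}) + 2\eps$ follows by the same reasoning, and integrating both inequalities against any Borel probability measure $\nu$ on $F$ (which is the case relevant to the application in Theorem \ref{thm:main}, where $\nu \in \{\hat\pi^x_T, \pi\}$) yields \eqref{eq: approx_values}. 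The main obstacle in the construction is securing $\sup_F|^\uparrow\phi^n_{m,k}| \leq C+\eps$ for \emph{every} $k$ while simultaneously preserving monotonicity in $k$ and the approximation property on $\bar F_{n-2}$; this is exactly what the outer truncation at level $\sup_F|h_m|$, built directly into the definition of the $\phi$'s rather than added a posteriori, accomplishes, provided $M$ is chosen to lie narrowly in $[C+\eps/2, C+\eps]$.
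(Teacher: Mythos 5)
Your proposal is correct, and it reaches the same conclusion as the paper by a genuinely different construction of the approximating family. The paper builds the monotone Lipschitz ladder via inf- and sup-convolutions (Moreau envelopes) applied to an abstract countable dense subset of $C_b(\bar F_n;\Real)$: it sets $^\uparrow\tilde\phi^n_{m,k}(y)=\inf_{y_0\in\bar F_n}\bigl(\phi^n_m(y_0)+k|y-y_0|\bigr)$ and its sup-convolution companion, extends each by multiplying with the cutoff $\theta^{n-2}$ and declaring it zero outside $\bar F_n$, and then uses the classical monotone convergence of Moreau envelopes and their $k$-Lipschitz property. You instead pick an explicit countable trial family (rational polynomials times cutoffs, truncated at rational levels), and obtain the monotone ladder by the elementary shift $h_m\theta^n\mp 1/k$ followed by truncation; monotonicity in $k$, the sup-norm control, and membership in $\mathcal{C}$ then come essentially for free, avoiding any appeal to the theory of inf/sup-convolutions. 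Your region-by-region verification of the pointwise inequality (on $\bar F_{n-2}$ versus off it) is a slight repackaging of the same accounting the paper does via $\nu[\bar F_{n-2}^c]\leq\int_F(1-\theta^{n-4})\,d\nu$, and you correctly observe that both arguments implicitly require $\nu$ to be a (sub)probability measure to absorb terms like $\eps\,\nu(\cdot)\leq\eps$ --- a restriction the paper's proof also uses without flagging it, and which is harmless since the only $\nu$'s in the application are $\pi$ and $\hat\pi^x_T$. The main trade-off: the paper's Moreau-envelope route is standard and terse but requires knowing the relevant properties of inf/sup-convolutions; your route is longer in setup but entirely self-contained and makes the countability of $\tilde{\mathcal{C}}$ and the uniform sup-norm bound for \emph{all} $k$ transparent by construction.
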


\begin{proof}[Proof of Lemma \ref{lem: approximation}]
Fix $n\in\mathbb{N}$ and let $(\phi^n_m)_{m\in\mathbb{M}}$ be a countable dense (with respect to the supremum norm) subset of $C_b(\bar{F}_n;\Real)$.  Now, let $k\in\mathbb{N}$ and define:
\begin{equation}\label{eq: tilde_phi_nmk}
^\uparrow \tilde{\phi}^{n}_{m,k}(y) \dfn \inf_{y_0\in \bar{F}_n}\pare{\phi^n_m(y_0) + k|y-y_0|};\quad ^\downarrow \tilde{\phi}^{n}_{m,k}(y) \dfn \sup_{y_0\in \bar{F}_n}\pare{\phi^n_m(y_0) - k|y-y_0|};\qquad y\in \bar{F}_n.
\end{equation}
As shown in \cite[Ch. 3.4]{MR2378491}, $^\uparrow \tilde{\phi}^{n}_{m,k}$ and $^\downarrow \tilde{\phi}^n_{m,k}$ are a) increasing and decreasing respectively in $k$, and b) Lipschitz continuous in $\bar{F}_n$ with Lipschitz constant $k$. Furthermore, as $k\uparrow\infty$, $^{\uparrow}\tilde{\phi}^{n}_{m,k}\nearrow\phi^n_m$ and $^{\downarrow}\phi^{n}_{m,k}\searrow \phi^{n}_m$ on $\bar{F}_n$.

Next, let $\theta^n\in C^\infty(F;\Real)$ be such that $0\leq \theta^n\leq 1$, $\theta^n(y) = 1$ on $\bar{F}_n$ and $\theta^n(y) = 0$ on $F_{n+1}^c$. Clearly, $\theta^n\in\mathcal{C}$ for each $n$. Now, assume $n\geq 3$ and extend $^\uparrow\tilde{\phi}^n_{m,k}$ and $^\downarrow \tilde{\phi}^n_{m,k}$ from functions on $\bar{F}_n$ to all of $F$ via
\begin{equation*}
^\uparrow \phi^n_{m,k}(y) = \begin{cases} ^\uparrow\tilde{\phi}^n_{m,k}(y)\theta^{n-2}(y) & y\in\bar{F}_n\\ 0 & else\end{cases};\qquad ^\downarrow \phi^n_{m,k}(y) = \begin{cases} ^\downarrow\tilde{\phi}^n_{m,k}(y)\theta^{n-2}(y) & y\in\bar{F}_n\\ 0 & else\end{cases}
\end{equation*}
Clearly, $^{\uparrow}\phi^n_{m,k}$ and $^\downarrow\phi^n_{m,k}$ are Lipschitz on $F$ and, since $F_n$ is bounded, it also holds that $^\uparrow \phi^n_{m,k}$ and $^\downarrow \phi^n_{m,k}$ are in $\mathcal{C}$. Note also that $^\uparrow\phi^n_{m,k}$ and $^\downarrow \phi^n_{m,k}$ increase and decrease respectively as $k\uparrow\infty$ to a function which is equal to  $\phi^n_m$ on $\bar{F}_{n-2}$ and that $^\uparrow\phi^n_{m,k}, ^\downarrow\phi^n_{m,k}$ are bounded on all of $F$ by $\sup_{y\in\bar{F}_n}|^\uparrow\tilde{\phi}^n_{m,k}(y)|$ and $\sup_{y\in\bar{F}_n}|^\downarrow\tilde{\phi}^n_{m.k}(y)|$ respectively. This proves $1),2)$ above.

Now, let $h\in C_b(F;\Real)$ with $C = \sup_{y\in F}|h(y)|$. Let $\eps > 0$ and for $n\geq 5$ choose $m=m(\eps,n)$ so that $\sup_{y\in\bar{F}_n}|h(y)-\phi^n_m(y)| \leq \eps$. By construction of $^\uparrow \tilde{\phi}^n_{m,k}$ in \eqref{eq: tilde_phi_nmk} it follows for each $k$ that
\begin{equation*}
-(C+\eps) \leq \inf_{y_0\in\bar{F}_n}(\phi^n_m(y_0)) \leq\ ^\uparrow\tilde{\phi}^n_{m,k}(y) \leq \phi^n_m(y) \leq h(y) + \eps\leq C+\eps;\qquad y \in \bar{F}_n.
\end{equation*}
By definition of $^\uparrow \phi^n_{m,k}$ this gives $\sup_{y\in F}|^\uparrow \phi^n_{m,k}(y)|\leq C+\eps$. Furthermore, since $\theta^{n-2}(y) = 1$ on $\bar{F}_{n-2}$, we have $h(y) \geq ^\uparrow \phi^n_{m,k}(y) - \eps$ on $\bar{F}_{n-2}$. Therefore, for any Borel measure $\nu$, using the notation in \eqref{eq: pi_int}:
\begin{equation*}
\begin{split}
\inner{h}{\nu} &\geq \inner{\left(^\uparrow\phi^n_{m,k}-\eps\right)1_{\bar{F}_{n-2}}}{\nu} - C\nu\bra{\bar{F}_{n-2}^c}\\
&\geq \inner{^\uparrow\phi^n_{m,k}}{\nu} - \inner{^\uparrow\phi^{n}_{m,k}1_{\bar{F}_{n-2}^c}}{\nu} - \eps - C\nu\bra{\bar{F}_{n-2}^c}\\
&\geq \inner{^\uparrow\phi^n_{m,k}}{\nu} - (C+\eps)\nu\bra{\bar{F}_{n-2}^c} - \eps - C\nu\bra{\bar{F}_{n-2}^c}\\
&\geq \inner{^\uparrow\phi^n_{m,k}}{\nu} - 2C\nu\bra{\bar{F}_{n-2}^c} - 2\eps\\
&\geq \inner{^\uparrow\phi^n_{m,k}}{\nu} - 2C\int_F (1-\theta^{n-4})d\nu - 2\eps,\\
\end{split}
\end{equation*}
where the last inequality follows since $1_{\bar{F}^c_{n-2}(y)}\leq 1-\theta^{n-4}(y)$. This gives the lower bound in \eqref{eq: approx_values}. A similar calculation shows for all $k$ that
\begin{equation*}
\begin{split}
-(C+\eps) \leq h(y)-\eps \leq \phi^n_m(y) \leq ^\downarrow \tilde{\phi}^n_{m,k}(y) \leq \sup_{y_0\in\bar{F}_n}(\phi^n_m(y_0)) \leq C + \eps;\qquad y\in \bar{F}_n.
\end{split}
\end{equation*}
This gives $\sup_{y\in F}|^\downarrow \phi^n_{m,k}(y)|\leq C+\eps$ and $h(y) \leq ^\downarrow \phi^n_{m,k}(y) +\eps$ on $\bar{F}_{n-2}$. Thus
\begin{equation*}
\begin{split}
\inner{h}{\nu} &\leq \inner{\left(^\downarrow\phi^n_{m,k}+\eps\right)1_{\bar{F}_{n-2}}}{\nu} + C\nu\bra{\bar{F}_{n-2}^c}\\
&\leq \inner{^\downarrow\phi^n_{m,k}}{\nu} - \inner{^\downarrow\phi^{n}_{m,k}1_{\bar{F}_{n-2}^c}}{\nu} + \eps + C\nu\bra{\bar{F}_{n-2}^c}\\
&\leq \inner{^\downarrow\phi^n_{m,k}}{\nu} + (C+\eps)\nu\bra{\bar{F}_{n-2}^c} + \eps + C\nu\bra{\bar{F}_{n-2}^c}\\
&\leq \inner{^\downarrow\phi^n_{m,k}}{\nu} + 2C\nu\bra{\bar{F}_{n-2}^c} + 2\eps\\
&\leq \inner{^\downarrow\phi^n_{m,k}}{\nu} + 2C\int_F (1-\theta^{n-4})d\nu + 2\eps.\\
\end{split}
\end{equation*}
Therefore, the upper bound in \eqref{eq: approx_values} is established.
\end{proof}

\end{appendix}

\bibliographystyle{siam}
\bibliography{master}
\end{document}